\theoremstyle{definition}\newtheorem{Definition}{Definition}[section]
\theoremstyle{definition}
\theoremstyle{plain}\newtheorem{Lemma}{Lemma}[section]
\theoremstyle{plain}\newtheorem{Theorem}{Theorem}[section]
\theoremstyle{plain}\newtheorem{Proposition}{Proposition}[section]
\theoremstyle{plain}\newtheorem{Corollary}{Corollary}[section]
\theoremstyle{remark}\newtheorem{Remark}{Remark}[section]
\theoremstyle{plain}\newtheorem{Affirmation}{Affirmation}
\theoremstyle{plain}
\theoremstyle{plain}\newtheorem{Claim}{Claim}
\title[]
{Robust Transitivity and Density of Periodic Points of Partially Hyperbolic Diffeomorphisms}
\author[Alien Herrera Torres \hspace{0,2cm} and \hspace{0,2cm} Ana T\'ercia Monteiro Oliveira]{Alien Herrera Torres \hspace{0,2cm} and \hspace{0,2cm} Ana T\'ercia Monteiro Oliveira}
\date{\today}
\begin{document}

\begin{abstract}
We prove results related to robust transitivity and density of periodic points of Partially 
Hyperbolic Diffeomorphisms under conditions involving
Accessibility and a property in the tangent bundle .
\end{abstract}

\maketitle

\tableofcontents

\newpage

\section{Introduction}
\label{sec:Introduction}
\noindent A very interesting feature of a differentiable dynamical system is topological transitivity. 
Being a sign of complexity of the underlying 
dynamics it prevents the possibility of reducing its study to simplest systems.

One of the most important questions in the theory of Differentiable Dynamical Systems
regarding a particular dynamical property is to recognize when it is present in all 
nearby systems (with respect to some topology). When this happens we say that the 
property is robust or stable under perturbations.

So the search for conditions on a differentiable dynamical system leading to
robust transitivity has been a topic of interest for a long time. Many examples
exhibiting robust transitivity has been studied, beginning with the transitive 
Anosov diffeomorphisms.

Robust transitivity is not an exclusive property of Hyperbolic Diffeomorphisms
as it has been showed first by the example of Shub on the torus $\mathbb{T}^{4}$, 
later by the example of Ma\~{n}\'e on the torus $\mathbb{T}^{3}$ and more recently
by the example of Bonatti and D\'ias in \cite{BD}. All these examples are Partially
Hyperbolic Systems (see section~\ref{sec:Preliminaries}). While other example,
due to Bonatti and Viana \cite{BV}, exhibits just dominated splitting.

Ergodicity and its stability are other important properties
to study on a dynamical system. A well known conjecture formulated by Pugh and Shub 
\cite{PuSh} on Stable Ergodicity for Partially Hyperbolic Systems has been the motivation
for a lot of research during the last few years.

One of the conditions appearing on the hypothesis of this conjecture is that of
Accessibility (see section~\ref{sec:Preliminaries}) which as the work \cite{DW}
shows is a typical property in the sense that it is $C^{1}$ dense among 
the $C^{r}$ Partially Hyperbolic Diffeomorphisms of a compact manifold. 

Accessibility has also a relation with transitivity according to Brin's Theorem \cite{P}
stating that in a Partially Hyperbolic Accessible System, transitivity is equivalent 
to the fact of the non-wandering set being the whole manifold.

In connection with the mentioned examples of Shub and Ma\~{n}\'e, the authors
Pujals and Sambarino introduced in \cite{PS} an interesting property which they 
call Property SH,  to guarantee that
the strong stable foliation is robustly minimal. A key feature of Property SH is its
intrinsic robustness which makes it an appealing condition to use for 
establishing robust transitivity in more general contexts.

This work has been motivated by the idea of exploring the consequences, in the
sense of robust transitivity, of the combination of  
Property SH and Accessibility, for Partially Hyperbolic Systems.

Our first result arise naturally 
after the observation that in the proof of Brin's Theorem the accessibility 
in relation to open sets 
(as defined in Section~\ref{sec:Preliminaries}) is enough to guarantee 
transitivity. Having at hand the Property SH and its robustness we can 
establish the robustness of accessibility in relation to open sets, 
see Corollary~\ref{Corollary:Robustness of accesibility by open sets}. 
As a Corollary, it follows our first result 
(see Section~\ref{sec:Property SH and Accessibility}) related to 
robust transitivity:

\begin{Corollary}\label{Corollary:A}
Let $f\in Diff^{r}(M)$ be a partially hyperbolic, accessible, volume preserving diffeomorphism
exhibiting the property SH. Then any diffeomorphism $C^{1}$-close to $f$ that is volume
preserving  is topologically mixing.
\end{Corollary}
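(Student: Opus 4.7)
The plan is to combine the robustness built into the hypotheses with the standard fact that minimality of a strong stable foliation upgrades transitivity to mixing. Let $g$ be any volume preserving diffeomorphism $C^{1}$-close to $f$. First, Property SH is intrinsically robust, so $g$ still exhibits it and in particular the strong stable foliation $W^{ss}_{g}$ is minimal. By Corollary~\ref{Corollary:Robustness of accesibility by open sets}, accessibility in relation to open sets also persists under small $C^{1}$-perturbations, so $g$ is accessible in relation to open sets. Finally, volume preservation combined with Poincar\'e recurrence gives $\Omega(g)=M$.

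With these three ingredients I would invoke Brin's theorem, in the refined form emphasized in the introduction where only accessibility in relation to open sets is required, to conclude that $g$ is topologically transitive. The remaining task is therefore to promote transitivity to topological mixing, using the minimality of $W^{ss}_{g}$.

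The key observation is a uniform version of minimality: since $M$ is compact and the strong stable foliation depends continuously on the base point, for each non-empty open set $U\subset M$ there exists a radius $R(U)>0$ such that every strong stable disk of radius at least $R(U)$ intersects $U$. Given non-empty open sets $U,V\subset M$, choose $x\in V$ and a strong stable disk $D\subset W^{ss}_{g}(x)\cap V$ of some positive radius $r$. Since $E^{s}$ is uniformly contracted by $Dg$, there is $\mu>1$ such that $g^{-n}(D)$ contains a strong stable disk around $g^{-n}(x)$ of radius at least $r\mu^{n}$. Taking $N$ with $r\mu^{N}\geq R(U)$, for every $n\geq N$ the set $g^{-n}(D)$ meets $U$; applying $g^{n}$ yields $g^{n}(U)\cap V\neq\emptyset$, which is topological mixing.

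The principal obstacle I expect is the uniform minimality step: passing from the qualitative statement that every stable leaf is dense to the quantitative statement that stable disks of one uniform size meet a prescribed open set. This requires compactness of $M$ together with uniform continuity of the (a priori only continuous) strong stable foliation, and it is the place where the argument needs some care. A secondary, more bookkeeping, issue is to check that the robustness corollary applies to \emph{every} nearby volume preserving $g$ without auxiliary hypotheses, so that Brin's theorem is genuinely available at $g$ and the whole argument goes through inside a single $C^{1}$-neighborhood of $f$.
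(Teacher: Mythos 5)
Your first half matches the paper: Property SH and accessibility in relation to open sets are both $C^{1}$-robust (Theorem~\ref{Theorem:SH is robust} and Corollary~\ref{Corollary:Robustness of accesibility by open sets}), volume preservation plus Poincar\'e recurrence gives $\Omega(g)=M$, and the refined Brin theorem (Theorem~\ref{Theorem:Brin's Theorem}) then yields transitivity of $g$. The gap is in your upgrade from transitivity to mixing: you assert that Property SH implies minimality of the strong stable foliation of $g$, and everything after that rests on this claim. That implication is not available. The paper's Theorem~\ref{Theorem:Stable foliation is robustly minimal} says only that if $f$ has Property SH \emph{and} $\mathcal{F}^{ss}(f)$ is already minimal, then minimality is robust; Property SH by itself is a statement about uniform central expansion along points on unstable leaves and gives no density of stable leaves. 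Since minimality of $\mathcal{F}^{ss}$ is not among the hypotheses of the Corollary, your "uniform minimality" step (which would otherwise be a correct and standard argument) has nothing to stand on, and the proof of mixing collapses.

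The paper gets mixing without any minimality assumption by a different mechanism. Proposition~\ref{Proposition:Propiedad 1} shows that Property SH alone produces, for every center-unstable disc $D$, a hyperbolic periodic point $p$ (uniformly expanding in the central direction) whose strong stable manifold meets $D$ transversally. Combining this with the fact that $\Omega(g)=M$ forces $\Omega(g^{n})=M$ for all $n$ (Proposition~\ref{Proposition:Observacion 14 del Referee}), hence $g^{n}$ is transitive for all $n$ by the same Brin-type argument, one applies the $\lambda$-lemma to the iterates $g^{k}$ ($k$ the period of $p$) as in Theorem~\ref{Theorem:Observacion 11 del Referee} to conclude that $g^{k}$, and hence $g$, is topologically mixing. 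To repair your proof you should replace the minimality step by this periodic-point/$\lambda$-lemma argument, and also supply the transitivity of all powers $g^{n}$, which your version does not address.
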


Abdenur and Crovisier proved in \cite{AC} that the fact of a 
diffeomorphism being robustly transitive implies that it is also
topologically mixing modulo an arbitrarily small $C^{1}$ perturbation.
In the same spirit  we have the following Theorem 
in Section~\ref{sec:Property SH and Topologically Mixing}:

\begin{Theorem}\label{Theorem:B}
Let $f\in Diff^{r}(M)$ be a partially hyperbolic diffeomorphism, accessible in relation to open sets, 
satisfying $\Omega (f)=M$ and the Property SH, then $f$ is topologically mixing.
\end{Theorem}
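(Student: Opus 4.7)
The strategy is to first promote Property SH into minimality of the strong stable foliation $\mathcal{F}^{ss}$ of $f$, and then convert minimality plus the contraction of $\mathcal{F}^{ss}$ under $f$ into topological mixing. In what follows the hypotheses are used as follows: accessibility in relation to open sets together with $\Omega(f)=M$ serves to run the Brin-type argument already sketched in Section~\ref{sec:Property SH and Accessibility}, giving transitivity of $f$; Property SH contributes the rigidity that turns this transitivity into density of every strong-stable leaf.

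The first and most delicate step is to show that every leaf of $\mathcal{F}^{ss}$ is dense in $M$. Property SH was introduced by Pujals--Sambarino in \cite{PS} precisely to force (robust) minimality of the strong stable foliation under suitable side conditions, and under our hypotheses one can upgrade the abstract SH structure into actual density of every strong-stable leaf: one takes the closed $f$-invariant saturation of a strong-stable leaf, uses the SH structure to produce a strong-unstable arc of uniform length contained in the saturation at every accessible point, and then uses accessibility in relation to open sets plus $\Omega(f)=M$ to propagate this to all of $M$. I expect this combination to be the main technical obstacle.

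Once the minimality of $\mathcal{F}^{ss}$ is available, a standard compactness argument yields the uniform statement: for every nonempty open $W \subset M$ there is a radius $R(W)>0$ such that every local strong stable disk of radius at least $R(W)$ in $M$ meets $W$. Now fix nonempty open sets $U,V\subset M$ and pick a small strong stable plaque $D \subset V$ of radius $r>0$ centered at some $p\in V$. Partial hyperbolicity supplies a constant $\lambda>1$ with $\|Df^{-1}|_{E^{ss}}\|\geq \lambda$, so $f^{-n}(D)$ contains a strong stable disk of radius $r\lambda^{n}$ around $f^{-n}(p)$. As soon as $n$ is large enough that $r\lambda^{n}\geq R(U)$, the uniform minimality from the previous paragraph forces $f^{-n}(D)\cap U\neq \emptyset$, i.e.\ $f^{n}(U)\cap V\neq \emptyset$. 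Since the threshold depends only on $U$ and $V$, we conclude that $f$ is topologically mixing, proving Theorem~\ref{Theorem:B}.
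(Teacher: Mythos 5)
There is a genuine gap, and it is exactly the step you flag as ``the main technical obstacle'': the claim that Property SH, together with accessibility in relation to open sets and $\Omega(f)=M$, yields minimality of the strong stable foliation. Property SH does not imply minimality of $\mathcal{F}^{ss}$; in \cite{PS}, and everywhere in this paper (Theorem~\ref{Theorem:Stable foliation is robustly minimal}, Theorem~\ref{Theorem:Density of periodic points}, Proposition~\ref{Proposition:Enrique}), minimality is an \emph{additional hypothesis} that SH upgrades to \emph{robust} minimality --- it is never a consequence of SH. Nor do the other hypotheses supply it: accessibility in relation to open sets only gives a residual set of points with dense accessibility classes, and an accessibility class is built from alternating stable \emph{and} unstable legs, so its density says nothing about the density of a single stable leaf. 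Your proposed mechanism (a strong-unstable arc of uniform length inside the $\mathcal{F}^{ss}$-saturated, $f$-invariant closure of a leaf) is not produced by SH either: SH gives, on each unit unstable plaque, one point with uniform center expansion; it does not place unstable arcs inside stable-saturated sets. So the first half of your argument is unproved and quite possibly false in this generality, and the second half (uniform minimality plus backward expansion of stable plaques gives mixing), which is correct as far as it goes, rests entirely on it.

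The paper avoids minimality altogether. It uses SH to produce a hyperbolic periodic point $p$, expanding in the center direction, whose stable manifold transversally meets any prescribed center-unstable disc $D\subset\mathcal{U}$ (Proposition~\ref{Proposition:Propiedad 1}); it uses $\Omega(f)=M$ and accessibility in relation to open sets to get transitivity of every power $f^{n}$ (Proposition~\ref{Proposition:Observacion 14 del Referee} together with Theorem~\ref{Theorem:Brin's Theorem}); transitivity of $f^{k}$ ($k$ the period of $p$) then places a stable plaque of $\mathcal{V}$, after a negative iterate, across $W^{cu}_{r}(p,f)$ (Proposition~\ref{Proposition:Propiedad 2}); and the $\lambda$-lemma for $f^{k}$ at $p$ closes the argument (Theorem~\ref{Theorem:Observacion 11 del Referee}). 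If you want to salvage your route, you would have to either prove minimality of $\mathcal{F}^{ss}$ under these hypotheses --- which would be a substantially stronger result than the theorem itself --- or replace the leaf-density step by a localized version such as the periodic-point argument above.
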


\noindent In section~\ref{sec:Examples} we show that Shub's example in
$\mathbb{T}^{4}$ satisfies all the conditions in Theorem~\ref{Theorem:B}.

\noindent Theorem~\ref{Theorem:C} by Miss Oliveira proves that Property SH is enough 
to guarantee robust transitivity. 

\begin{Theorem}\label{Theorem:C}
Let $M$ be a compact Riemannian manifold and let  $f\in Diff^{r}(M)$ be a partially 
hyperbolic diffeomorphism, non-hyperbolic, transitive. If $f$ and $f^{-1}$ satisfy 
Property SH then $f$ is robustly transitive.
\end{Theorem}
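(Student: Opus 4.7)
The plan is to produce a $C^{1}$-neighborhood $\mathcal{U}$ of $f$ such that every $g\in\mathcal{U}$ is transitive. Partial hyperbolicity is $C^{1}$-open, so after shrinking $\mathcal{U}$ every $g\in\mathcal{U}$ is partially hyperbolic with strong stable and strong unstable foliations $W^{ss}_{g}$ and $W^{uu}_{g}$. The decisive input is the intrinsic robustness of Property SH emphasized in the introduction: shrinking $\mathcal{U}$ further we may assume that every $g\in\mathcal{U}$ satisfies Property SH and that $g^{-1}$ also satisfies Property SH for every such $g$. By the Pujals--Sambarino mechanism, Property SH for $g$ forces $W^{ss}_{g}$ to be minimal on $M$, and applying the same to $g^{-1}$ forces $W^{uu}_{g}$ to be minimal.

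With both strong foliations robustly minimal, transitivity of any $g\in\mathcal{U}$ follows from the classical unstable-disc saturation argument. Fix nonempty open sets $U,V\subset M$, pick $x\in U$, and let $D\subset W^{uu}_{g}(x)\cap U$ be a small embedded strong unstable disc. Under forward iteration the intrinsic radius of $g^{n}(D)$ grows at a uniform exponential rate determined by the minimum expansion along $E^{u}_{g}$, so for $n$ large the set $g^{n}(D)$ contains a $W^{uu}_{g}$-disc of arbitrarily large intrinsic radius around $g^{n}(x)$. Minimality of $W^{uu}_{g}$ provides a radius $R>0$ such that every strong unstable disc of radius $R$ meets $V$; choosing $n$ large enough then yields $g^{n}(U)\cap V\neq\emptyset$. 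Since $U$ and $V$ are arbitrary, $g$ is transitive, and hence $f$ is robustly transitive.

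The main obstacle is making the previous step uniform on $\mathcal{U}$: one needs a single radius $R_{0}$ such that, for every $g\in\mathcal{U}$, every strong unstable $W^{uu}_{g}$-disc of radius $R_{0}$ intersects $V$, together with a uniform lower bound on the expansion along $E^{u}_{g}$ so that the iterate $n$ at which $g^{n}(D)$ reaches radius $R_{0}$ can be bounded independently of $g$. Both points follow from the compactness of $M$, the $C^{1}$-continuity of the partially hyperbolic splitting and the quantitative character of Property SH, but handling them carefully is the technical core of the argument. The hypothesis that $f$ is non-hyperbolic ensures that the center bundle is nontrivial so the statement is genuinely beyond the Anosov case, while the transitivity of $f$ itself serves only as a baseline from which the neighborhood $\mathcal{U}$ can be opened; the minimality of both strong foliations in $\mathcal{U}$ is what actually drives the conclusion.
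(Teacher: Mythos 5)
The decisive step in your argument --- ``Property SH for $g$ forces $W^{ss}_{g}$ to be minimal on $M$'' --- is not what the Pujals--Sambarino mechanism provides, and it is false as stated. Theorem~\ref{Theorem:Stable foliation is robustly minimal} says that if $f$ satisfies Property SH \emph{and} $\mathcal{F}^{ss}(f)$ is already minimal, then that minimality is $C^{1}$-robust; Property SH by itself is a quantitative expansion condition along the centre bundle at selected points of strong unstable leaves and carries no information about the density of strong stable leaves. The whole interest of Theorem~\ref{Theorem:C}, as the introduction emphasizes, is precisely to \emph{replace} the minimality hypothesis of \cite{PS} by Property SH for $f^{-1}$, so an argument that begins by deducing minimality of both strong foliations from Property SH assumes what the theorem is designed to avoid. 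Once this step is removed, the disc-saturation argument in your second paragraph has no minimality to feed on and the proof collapses.

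The paper's proof takes a route you explicitly set aside: the transitivity of $f$ is used essentially, not as a ``baseline''. One fixes a point $z$ whose forward and backward $f$-orbits are dense, covers $M$ by finitely many $\tfrac{\delta}{2}$-balls centred at finitely many iterates $f^{n_{i}}(z)$, and shrinks the neighbourhood of $f$ so that $g^{n}$ remains $C^{0}$-close to $f^{n}$ for the finitely many relevant times $n$. Property SH for $g$ (respectively for $g^{-1}$) then produces, inside a suitable forward iterate of $\mathcal{U}$ (respectively backward iterate of $\mathcal{W}$), a centre-unstable (respectively centre-stable) disc of a uniform radius $r$, and these two discs are connected by transporting a strong stable leaf along the shadowed segment of the dense orbit of $z$, using the uniform transversal intersections of Lemma~\ref{Lemma:Consequence of the Stable Manifold Theorem}. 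To salvage your approach you would first have to derive minimality of both strong foliations from the stated hypotheses, which is not available.
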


The point in this Theorem is that the hypotheses are
given only on the tangent bundle. The condition of minimality
of the stable foliation assumed in \cite{PS} was substituted
here by the Property SH for $f^{-1}$. In 
section~\ref{sec:Examples} we give an scenario where all
the conditions in Theorem~\ref{Theorem:C} are realized.
An interesting question not addressed in our work is if it is 
possible to exploit Theorem~\ref{Theorem:C} to produce new examples
of robustly transitive diffeomorphisms.

Finally we would like to express our deep gratitude to professor Enrique Pujals
from IMPA for the multiple suggestions and clarifying discussions during
the course of our work.

In the following sections $M$ will denote a compact Riemannian manifold and
$Diff^{r}(M)$ the set of $C^{r}$-diffeomorphims defined on $M$.

\section{Preliminaries}
\label{sec:Preliminaries}
\noindent In this section we recall some well-known results regarding partially hyperbolic systems. 
We refer to \cite{HPS}, \cite{P}, \cite{S}, \cite{PS} for a general background on the topics we will review.

\subsection{Partially Hyperbolic Diffeomorphisms}
\label{subsec:Partially Hyperbolic Diffeomorphisms}\quad
\newline
\noindent As Property SH plays a central role in this work we follow closely the definitions
and basic results in \cite{PS}.

\begin{Definition}\label{Definition:Partially Hyperbolic Diffeomorphism}
A diffeomorphism $f:M\rightarrow M$ is partially hyperbolic  
provided the tangent bundle splits into three non-trivial sub-bundles $TM=E^{ss}\oplus E^{c}\oplus E^{uu}$
which are invariant under the tangent map $Df$ and there are $0<\lambda<\mu<1$ 
such that for all $x\in M$ 
\[\parallel Df_{\mid E^{ss}(x)}\parallel<\lambda,\hspace{0.4cm}  \parallel Df^{-1}_{\mid E^{uu}(x)}\parallel<\lambda,\hspace{0.4cm} \mu<\parallel Df^{-1}_{\mid E^{c}(x)}\parallel,\hspace{0.4cm}  \mu<\parallel Df_{\mid E^{c}(x)}\parallel.\]
\end{Definition}

\begin{Lemma}\label{Lemma:Partial hyperbolicity is open}
Let $f\in Diff^{r}(M)$ be a partially hyperbolic diffeomorphism and $Gr _{k}(M)$
denote the Grassmannian bundle of $M$ of $k$-dimensional spaces. 
Then there exist a $C^{r}$ neighbourhood of $f$, say $\mathcal{U}$, numbers $\lambda_{1}$ and $\mu_{1}$ with
$0<\lambda<\lambda_{1}<\mu<\mu_{1}<1$ and continuous functions $E^{ss}:\mathcal{U}\rightarrow C(M,Gr_{dim (E^{ss})}(M))$, $E^{c}:\mathcal{U}\rightarrow C(M,Gr_{dim (E^{c})}(M))$ and $E^{uu}:\mathcal{U}\rightarrow C(M,Gr_{dim (E^{uu})}(M))$ such that, for any $g\in\mathcal{U}$ and $x\in M$, we have the following:\\
\newline
(1) $TM=E^{ss}(g)(M)\oplus E^{c}(g)(M)\oplus E^{uu}(g)(M)$, this decomposition is invariant under $Dg$ and no one of these sub-bundles is trivial\\
\newline
\noindent (2) $\parallel Dg_{\mid E^{ss}(x,g)}\parallel<\lambda_{1}$, $\parallel Dg^{-1}_{\mid E^{uu}(x,g)}\parallel<\lambda_{1}$\\
\newline
\noindent (3) $\mu_{1}<\parallel Dg^{-1}_{\mid E^{c}(x,g)}\parallel$, $\mu_{1}<\parallel Dg_{\mid E^{c}(x,g)}\parallel$
\end{Lemma}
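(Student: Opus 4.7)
My approach is the classical cone-field characterisation of partial hyperbolicity, which recasts the strict inequalities of Definition~\ref{Definition:Partially Hyperbolic Diffeomorphism} as open conditions on $Df$ and hence propagates them to nearby diffeomorphisms.

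First, using continuity of the $f$-splitting, I would choose narrow continuous cone fields $C^{ss}, C^{cs}, C^{cu}, C^{uu}$ around $E^{ss},\, E^{ss}\oplus E^{c},\, E^{c}\oplus E^{uu},\, E^{uu}$ respectively, together with intermediate constants $\lambda_{1}, \mu_{1}$ with $0<\lambda<\lambda_{1}<\mu<\mu_{1}<1$, such that $Df$ sends $C^{uu}(x)$ and $C^{cu}(x)$ into the interiors of $C^{uu}(fx)$ and $C^{cu}(fx)$ with expansion rates bounded below by $\lambda_{1}^{-1}$ and $\mu_{1}$ respectively, and $Df^{-1}$ does the analogous thing for $C^{ss}$ and $C^{cs}$. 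This is a direct consequence of the strict bounds in the definition of partial hyperbolicity together with a continuity/compactness argument on the unit sphere bundle of $M$.

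Next, observe that the cone conditions just listed are strict and involve only $Df$ and the previously fixed cones, which do not depend on $g$. Hence there is a $C^{1}$-neighbourhood $\mathcal{U}\subset Diff^{r}(M)$ of $f$ on which every $g\in\mathcal{U}$ inherits the same strict cone invariance and norm bounds with the same constants $\lambda_{1},\mu_{1}$. For each such $g$, define the invariant subbundles as the usual maximal invariant subspaces inside each cone: $E^{uu}(g)(x)=\bigcap_{n\geq 0}Dg^{n}(C^{uu}(g^{-n}x))$, $E^{ss}(g)(x)=\bigcap_{n\geq 0}Dg^{-n}(C^{ss}(g^{n}x))$, and analogously $E^{cu}(g)(x)$ and $E^{cs}(g)(x)$, with $E^{c}(g)(x)=E^{cu}(g)(x)\cap E^{cs}(g)(x)$. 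Standard cone-field arguments (see \cite{HPS}) then yield that each $E^{\ast}(g)$ is a continuous subbundle of the correct dimension, that the resulting splitting is $Dg$-invariant, and that the norm inequalities (2) and (3) hold with the prescribed $\lambda_{1},\mu_{1}$. Continuity of the maps $g\mapsto E^{\ast}(g)\in C(M,Gr_{k}(M))$ follows from the fact that the iterated images of the cones contract exponentially to the limit bundles uniformly in $x\in M$ and in $g\in\mathcal{U}$.

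The main technical point is the center subbundle. Whereas $E^{ss}(g)$ and $E^{uu}(g)$ are recovered by a one-sided intersection and their norm bounds follow immediately from the cone rates, the conditions on $E^{c}$ in Definition~\ref{Definition:Partially Hyperbolic Diffeomorphism} are two-sided, which forces the crossover construction $E^{c}(g)=E^{cu}(g)\cap E^{cs}(g)$. One must then verify that this intersection has dimension exactly $\dim E^{c}$ at every point; this dimension count follows by transversality, since $E^{cs}(g)(x)$ and $E^{cu}(g)(x)$ remain close in the Grassmannian to $E^{cs}(f)(x)$ and $E^{cu}(f)(x)$, which are in general position for $f$, so they stay in general position for $g$ close to $f$. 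The two-sided lower norm bound on $E^{c}(g)$ is then inherited from the expansion rate $\mu_{1}$ of the $C^{cu}$ cone in the forward direction and of the $C^{cs}$ cone in the backward direction.
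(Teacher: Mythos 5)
The paper does not actually prove this lemma: it is stated as background material in Section~\ref{sec:Preliminaries}, with the reader referred to \cite{HPS} and \cite{PS}, so there is no in-text argument to compare yours against. Your cone-field proof is the standard argument from those references and is correct in outline: strict invariance of narrow cone fields around $E^{ss}$, $E^{cs}$, $E^{cu}$, $E^{uu}$ together with the associated rate bounds is an open condition on $Dg$ in the $C^{1}$ topology; the extreme bundles are recovered as decreasing intersections of iterated cones; the centre bundle is the transverse intersection $E^{cu}(g)\cap E^{cs}(g)$, whose dimension is pinned down exactly as you say; and continuity in $(x,g)$ follows because the $n$-fold cone images converge to the limit bundles uniformly in $x$ and $g$, so the bundles are uniform limits of jointly continuous sections of the Grassmannian. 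Two points are worth making explicit in a full write-up, though neither is a gap in the idea. First, the claim that $Dg$ expands every vector of the wide cone $C^{cu}$ at rate at least $\mu_{1}$ is cleanest after first replacing the Riemannian metric by an adapted one in which the three $f$-invariant subbundles are mutually orthogonal; otherwise the minimum norm of $Df$ on $E^{c}\oplus E^{uu}$ is only controlled up to a multiplicative constant and the pointwise cone estimates need extra care. Second, since the conclusions (2) and (3) are strict inequalities with the announced constants $\lambda_{1},\mu_{1}$, you should build the cones with slightly better rates (some $\lambda_{1}'<\lambda_{1}$ and $\mu_{1}'>\mu_{1}$) and let the $C^{1}$-smallness of the perturbation absorb the difference; this also guarantees $\lambda_{1}$ can be taken in the required interval $(\lambda,\mu)$ because the suprema in Definition~\ref{Definition:Partially Hyperbolic Diffeomorphism} are attained strictly below $\lambda$ on the compact manifold $M$.
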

\noindent The sub-bundles $E^{ss}(g)(M)$ and $E^{uu}(g)(M)$ are uniquely integrable and form two foliations $\mathcal{F}^{ss}$ and $\mathcal{F}^{uu}$.

\begin{Theorem}\label{Theorem:Continous dependence of foliations}
Let $\mathcal{U}$ be as in Lemma~\ref{Lemma:Partial hyperbolicity is open}. 
Then, for each $g\in\mathcal{U}$ there are two partitions $\mathcal{F}^{ss}(g)$ 
and $\mathcal{F}^{uu}(g)$ of $M$ such that for each $x\in M$ the elements of 
the partitions that contain $x$, denoted by $\mathcal{F}^{ss}(x,g)$ and 
$\mathcal{F}^{uu}(x,g)$ are $C^{1}$ submanifolds such that $T_{x}\mathcal{F}^{ss}(x,g)=E^{ss}(x,g)$ 
and $T_{x}\mathcal{F}^{uu}(x,g)=E^{uu}(x,g)$. These submani\-folds depend 
continuously (on compact subsets) on $x\in M$ and $g\in\mathcal{U}$. 
\end{Theorem}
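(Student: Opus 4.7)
The plan is to deduce the statement from the classical invariant manifold machinery of Hirsch--Pugh--Shub \cite{HPS}, applied uniformly to the family of dominated splittings $TM = E^{ss}(g)\oplus E^{c}(g)\oplus E^{uu}(g)$ supplied by Lemma~\ref{Lemma:Partial hyperbolicity is open}. The output of that machinery has three parts -- existence of local leaves, their saturation into a global partition, and continuous dependence on parameters -- and I would organize the proof along these three steps, treating the stable case in detail and deducing the unstable case by applying the same argument to $g^{-1}$.

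First, for each $g\in\mathcal{U}$ and each $x\in M$ I would construct a local strong stable disk $W^{ss}_{\mathrm{loc}}(x,g)$ by a graph transform in a tubular neighbourhood of $x$. Using exponential charts, write points near $x$ as $(u,v)\in E^{ss}(x,g)\oplus (E^{c}(x,g)\oplus E^{uu}(x,g))$ and consider the Banach space $\Sigma_x$ of continuous sections $\sigma:B_{r}(0)\subset E^{ss}(x,g)\to E^{c}\oplus E^{uu}$ with $\sigma(0)=0$ and small Lipschitz constant. Because the constants $\lambda_{1}<\mu_{1}$ of Lemma~\ref{Lemma:Partial hyperbolicity is open} hold \emph{uniformly} on $\mathcal{U}$, the induced graph transform $\Gamma_{g,x}$ determined by $g^{-1}$ is a uniform contraction on $\Sigma_x$. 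Its unique fixed point is the sought local disk; standard bootstrap in HPS upgrades regularity to $C^{1}$ and gives $T_{x}W^{ss}_{\mathrm{loc}}(x,g)=E^{ss}(x,g)$.

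Second, I would globalize by saturation. Since $g$ uniformly contracts $E^{ss}$, define
\[
\mathcal{F}^{ss}(x,g) \;=\; \bigcup_{n\ge 0} g^{-n}\bigl(W^{ss}_{\mathrm{loc}}(g^{n}(x),g)\bigr).
\]
The injectively immersed $C^{1}$ structure and the invariance $g(\mathcal{F}^{ss}(x,g))=\mathcal{F}^{ss}(g(x),g)$ are automatic from the construction, and the partition property follows from the standard fact that two local leaves whose intersection is nonempty must coincide on that intersection (otherwise one extracts a contradiction with the contracting character of $\Gamma_{g,x}$). The same argument applied to $g^{-1}$, whose partial hyperbolicity constants on $\mathcal{U}$ are obtained by reversing the roles in Lemma~\ref{Lemma:Partial hyperbolicity is open}, produces $\mathcal{F}^{uu}(g)$.

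The step I expect to be the most delicate is the joint continuous dependence on $(x,g)$. The point is that the fixed point of $\Gamma_{g,x}$ depends continuously on the parameters as long as $(g,x)\mapsto \Gamma_{g,x}$ is continuous in the operator norm on $\Sigma_x$; this is the content of the uniform contraction principle with parameters, and it holds here because $E^{ss}(g)$, $E^{c}(g)$, $E^{uu}(g)$ and $Dg$ all depend continuously on $g$ in the $C^{1}$ topology (by Lemma~\ref{Lemma:Partial hyperbolicity is open}) and on $x$. Thus $W^{ss}_{\mathrm{loc}}(x,g)$ varies continuously in the $C^{1}$ topology with $(x,g)$. Continuity of any compact piece of the global leaf $\mathcal{F}^{ss}(x,g)$ then follows because such a piece is captured by finitely many backward iterates of a local disk, and finite iteration of $g$ is a $C^{1}$-continuous operation in $g$. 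The only subtlety is that the global leaves are merely immersed, so the continuity claim has to be interpreted on compact embedded sub-disks of the leaves, which is exactly the formulation in the statement.
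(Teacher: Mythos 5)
The paper does not prove this statement: it is recalled as background and deferred to the references \cite{HPS}, \cite{P}, \cite{S}, \cite{PS} (note also the sentence immediately preceding it, which records the unique integrability of $E^{ss}$ and $E^{uu}$ as a known fact). Your graph-transform sketch is exactly the classical Hirsch--Pugh--Shub argument that those references carry out, and it is correct in outline, including the correct reduction of the $uu$-case to $g^{-1}$ and the correct reading of the continuity claim on compact plaques of the immersed leaves. The one step you assert rather than prove --- that two overlapping local disks agree, i.e.\ the unique integrability of $E^{ss}$, which is precisely what makes $\mathcal{F}^{ss}(g)$ a \emph{partition} --- is also the one step that does not follow formally from the uniform contraction principle; it needs the dynamical characterization of $W^{ss}_{\mathrm{loc}}(x,g)$ as the set of points whose forward $g$-orbit approaches that of $x$ at a rate strictly dominating the contraction possible inside $E^{c}\oplus E^{uu}$, so if you were to write this out in full that is the place to supply the missing argument.
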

These submanifolds 
$\mathcal{F}^{ss}(x,g)$ and $\mathcal{F}^{uu}(x,g)$ inherit the Riemannian metric on $M$. 
We shall denote by $\mathcal{F}^{ss}_{r}(x,g)$ (respectively $\mathcal{F}^{uu}_{r}(x,g)$) 
the ball in $\mathcal{F}^{ss}(x,g)$ (respectively $\mathcal{F}^{uu}(x,g)$) of radius $r$ centred at $x$.

The sub-bundle $E^{cu}=E^{c}\oplus E^{uu}$ is not integrable in general. However, we can choose a continuous family of locally invariant manifolds tangent to it. Let $dim E^{cu}=l$ and denote by $I_{\epsilon}$ the ball of radius $\epsilon$ in $R^{l}$.

\begin{Lemma}\label{Lemma:Inclusion of backward iteration}
Let $\mathcal{U}$ be as in Lemma~\ref{Lemma:Partial hyperbolicity is open}
and $Emb_{1} (I_{1},M)$ the set of $C^{1}$-embeddings of $I_{1}$ in $M$.
There exists a continuous map
$\varphi:M\times\mathcal{U} \rightarrow Emb_{1} (I_{1},M)$ such that, if we set 
$W^{cu}_{\epsilon}(x,g)=\varphi (x,g) I_{\epsilon}$, then the following hold:\\
\newline
\noindent (1) $T_{x} W^{cu}_{\epsilon}(x,g)=E^{cu}(x,g)$;\\
\newline
\noindent (2) given $\epsilon>0$ there exists $r=r(\epsilon)>0$ 
such that $g^{-1}(W^{cu}_{r}(x,g))\subset W^{cu}_{\epsilon}(g^{-1}(x),g).$
\end{Lemma}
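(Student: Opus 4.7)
The plan is to construct $\varphi$ by the classical plaque-family argument of Hirsch--Pugh--Shub, adapted to a uniform family of graph transforms parameterised by $(x,g)\in M\times\mathcal{U}$. For each such pair, use the exponential map to pull $g$ back to a local diffeomorphism $\tilde g_{x,g}:T_xM\to T_{g(x)}M$, and decompose both tangent spaces via the continuous invariant splitting $T_xM=E^{ss}(x,g)\oplus E^{cu}(x,g)$ provided by Lemma~\ref{Lemma:Partial hyperbolicity is open}. Writing a point as $(u,s)$ with $u\in E^{cu}$ and $s\in E^{ss}$, one has $\tilde g_{x,g}(0)=0$ and $D\tilde g_{x,g}(0)$ is block-diagonal with blocks of norm at most $\lambda_1$ on $E^{ss}$ and co-norm at least $\mu_1$ on $E^{cu}$. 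Shrinking the chart radius (uniformly in $\mathcal{U}$, by compactness of $M$ and continuity of the splittings) and multiplying $\tilde g_{x,g}$ by a cutoff so that outside the unit ball it agrees with $D\tilde g_{x,g}(0)$ yields a global diffeomorphism of $T_xM$ whose directional Lipschitz constants remain arbitrarily close to $\lambda_1$ and $\mu_1$.

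The graph transform acts on the complete metric space $\Sigma$ of continuous families $\{\sigma_{x,g}\}_{(x,g)\in M\times\mathcal{U}}$ of $1$-Lipschitz sections $\sigma_{x,g}:I_1\subset E^{cu}(x,g)\to E^{ss}(x,g)$ with $\sigma_{x,g}(0)=0$, equipped with the supremum norm. The image of the graph of $\sigma_{g^{-1}(x),g}$ under $\tilde g_{g^{-1}(x),g}$ is, thanks to the gap $\lambda_1<\mu_1$, again a $1$-Lipschitz graph whose base covers $I_1$; restricting over $I_1$ defines $\Gamma(\sigma)_{x,g}$. The standard Lipschitz computation shows that $\Gamma$ contracts in $\Sigma$ by a factor bounded by $\lambda_1/\mu_1<1$, so it has a unique fixed point $\sigma^{*}$. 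Setting $\varphi(x,g)(u):=\exp_x(u,\sigma^{*}_{x,g}(u))$ gives the required embedding, and continuity of $\varphi$ in $(x,g)$ follows from the parametric contraction-mapping principle together with the joint continuity of $\exp$, of $(x,g)\mapsto E^{ss}(x,g)$ and of $(x,g)\mapsto E^{cu}(x,g)$.

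Property (1) is immediate from $\sigma^{*}_{x,g}(0)=0$ and the fact that differentiating the fixed-point equation at the origin yields $D\sigma^{*}_{x,g}(0)=0$ (using that $D\tilde g_{x,g}(0)$ respects the splitting). Property (2) is the local invariance built into the construction: by definition $\tilde g_{g^{-1}(x),g}$ carries the graph of $\sigma^{*}_{g^{-1}(x),g}$ onto a graph whose restriction over $I_1$ is the graph of $\sigma^{*}_{x,g}$, so $g^{-1}$ maps a small central disc of $W^{cu}_1(x,g)$ into $W^{cu}_1(g^{-1}(x),g)$; uniform continuity of the charts and of $Dg$ on the compact set $M\times\overline{\mathcal{U}}$ turns this qualitative inclusion into the quantitative $r=r(\epsilon)$ independent of $(x,g)$. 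The main technical obstacle is organising every ingredient --- chart radii, cutoff supports, contraction rate of $\Gamma$, and the ambient space $\Sigma$ --- so that all estimates are \emph{uniform} in $(x,g)\in M\times\mathcal{U}$; absorbing the perturbative errors into the spectral gap $\mu_1-\lambda_1>0$ is what lets the parametric fixed-point machinery deliver a single continuous $\varphi$ satisfying both (1) and (2).
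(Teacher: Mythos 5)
The paper does not prove this lemma at all --- it is recalled from the plaque-family theory of Hirsch--Pugh--Shub (the references \cite{HPS}, \cite{PS}), and your graph-transform construction is exactly the standard route those sources take. The overall architecture of your argument is therefore the right one, but as written it has two concrete soft spots.

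First, the domain of the graph transform. You globalise the lifted maps with a cutoff but then take $\Sigma$ to consist of sections defined only on $I_{1}\subset E^{cu}(x,g)$, and assert that the image of such a graph under $\tilde g_{g^{-1}(x),g}$ ``covers $I_{1}$.'' That is false in general: the co-norm of $Dg$ on $E^{cu}$ is only bounded below by $\mu_{1}<1$ (the centre direction may genuinely contract, as in Ma\~{n}\'e's example), so the base component of the image of $I_{1}$ need only contain a disc of radius about $\mu_{1}$, and $\Gamma(\sigma)_{x,g}$ is not defined on all of $I_{1}$. The standard fix --- consistent with the globalisation you already performed on the maps --- is to let $\Sigma$ consist of bounded Lipschitz sections defined on all of $E^{cu}(x,g)$, run the contraction there, and only restrict the fixed section to $I_{\epsilon}$ at the very end to produce the plaques. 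Second, regularity. The fixed point of the graph transform on Lipschitz sections is a priori only Lipschitz, while the statement requires $\varphi(x,g)\in Emb_{1}(I_{1},M)$ and item (1) requires $T_{x}W^{cu}_{\epsilon}(x,g)=E^{cu}(x,g)$; you obtain the latter by ``differentiating the fixed-point equation at the origin,'' which presupposes the differentiability you have not established. One must invoke the $C^{1}$ section (fiber contraction) theorem --- the domination $\lambda_{1}<\mu_{1}$ is precisely what makes the induced action on candidate tangent planes a fiber contraction over the base contraction --- or at least give a separate argument that the local Lipschitz constant of $\sigma^{*}_{x,g}$ at $0$ vanishes. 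Both repairs are routine, but without them the written argument does not deliver $C^{1}$ embeddings tangent to $E^{cu}$; the rest of your outline (uniformity over $M\times\mathcal{U}$, continuity via the parametric contraction principle, and the derivation of item (2) from local invariance plus uniform continuity) is sound.
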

\noindent For the sake of simplicity we shall identify $W^{cu}_{\epsilon}(x,g)$ 
with the ball of radius $\epsilon$ in $W^{cu}_{1}(x,g)$.
\begin{Lemma}\label{Lemma:Backward iterations contract}
Let $\mathcal{U}$ be as in Lemmas~\ref{Lemma:Partial hyperbolicity is open} and 
~\ref{Lemma:Inclusion of backward iteration}. 
Given $0<\lambda<\lambda_{1}<1$ there exists $r_{0}$ such that if $g\in\mathcal{U}$ and $x\in M$ satisfy
\[
\prod_{j=0}^{n}\parallel Dg^{-1}_{\mid_{E^{cu}(g^{-j}(x)})}\parallel<\lambda^{n},\hspace{1 cm} 0\leq n\leq m,
\]
Then $g^{-m}(W^{cu}_{r_{0}}(x,g))\subset W^{cu}_{\lambda^{m}_{1}r_{0}}(g^{-m}(x),g)$.
\end{Lemma}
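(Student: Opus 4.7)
The overall plan is to combine a uniform local Lipschitz estimate for $g^{-1}$ restricted to central-unstable plaques with the hypothesis on the product of norms, and then proceed by induction on the number of backward iterates. The intuition is simply that in exponential coordinates on $W^{cu}$ the map $g^{-1}$ looks like its linearization $Dg^{-1}\big|_{E^{cu}}$ up to a nonlinear term that becomes arbitrarily small on small balls; the multiplicative slack between $\lambda$ and $\lambda_{1}$ is exactly what is needed to absorb that nonlinear error.

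\smallskip

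\textbf{Step 1 (uniform local Lipschitz estimate).} Fix $\varepsilon>0$ so that $(1+\varepsilon)\lambda<\lambda_{1}$. Using the map $\varphi:M\times\mathcal{U}\to\mathrm{Emb}_{1}(I_{1},M)$ from Lemma~\ref{Lemma:Inclusion of backward iteration}, I would write $g^{-1}$ in coordinates as
\[
G_{x,g}\;=\;\varphi(g^{-1}(x),g)^{-1}\circ g^{-1}\circ\varphi(x,g),
\]
a $C^{1}$ map whose derivative at $0$ is conjugate to $Dg^{-1}\big|_{E^{cu}(x,g)}$. Because $\varphi$ is jointly continuous, $(x,g)\mapsto G_{x,g}$ is continuous into $C^{1}$ on a neighborhood of $0$, and $M$ is compact while $\mathcal{U}$ may be shrunk to have compact closure. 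Hence there is a single $r_{1}>0$ such that, for every $x\in M$ and $g\in\mathcal{U}$, the restriction of $g^{-1}$ to the plaque $W^{cu}_{r_{1}}(x,g)$ has Lipschitz constant at most $(1+\varepsilon)\|Dg^{-1}\big|_{E^{cu}(x,g)}\|$ (measured in the intrinsic metric).

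\smallskip

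\textbf{Step 2 (forward invariance of the reference size).} Using Lemma~\ref{Lemma:Inclusion of backward iteration}(2) I would shrink $r_{1}$ to some $r_{0}$ with the property that $g^{-1}\bigl(W^{cu}_{r_{0}}(x,g)\bigr)\subset W^{cu}_{r_{1}}(g^{-1}(x),g)$ for all $x\in M$, $g\in\mathcal{U}$. This guarantees that the iterates stay inside the range where the Lipschitz estimate of Step 1 applies.

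\smallskip

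\textbf{Step 3 (induction).} I would prove by induction on $k\le m$ that
\[
g^{-k}\bigl(W^{cu}_{r_{0}}(x,g)\bigr)\;\subset\; W^{cu}_{\rho_{k}r_{0}}(g^{-k}(x),g),
\qquad
\rho_{k}=(1+\varepsilon)^{k}\prod_{j=0}^{k-1}\bigl\|Dg^{-1}\big|_{E^{cu}(g^{-j}(x),g)}\bigr\|.
\]
The base case is trivial. For the inductive step, the hypothesis together with the choice $(1+\varepsilon)\lambda<\lambda_{1}<1$ gives $\rho_{k}r_{0}\le r_{1}$, so the inductive image sits in a plaque where Step 1 is valid; applying the Lipschitz bound at $g^{-k}(x)$ produces the factor $(1+\varepsilon)\|Dg^{-1}\big|_{E^{cu}(g^{-k}(x),g)}\|$, which multiplies $\rho_{k}$ to give $\rho_{k+1}$. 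At $k=m$ the hypothesis yields $\rho_{m}\le(1+\varepsilon)^{m}\lambda^{m-1}\le\lambda^{-1}\lambda_{1}^{m}$, so after absorbing the fixed factor $\lambda^{-1}$ into a further shrinking of the initial radius $r_{0}$ I obtain the asserted inclusion $g^{-m}(W^{cu}_{r_{0}}(x,g))\subset W^{cu}_{\lambda_{1}^{m}r_{0}}(g^{-m}(x),g)$.

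\smallskip

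\textbf{Main obstacle.} The delicate point is the \emph{uniformity} in Step 1: one needs the Lipschitz error $(1+\varepsilon)$ to be independent of $(x,g)$, because the inductive step invokes the estimate at every point along the orbit $\{g^{-j}(x)\}_{j<m}$. This uniformity hinges on compactness of $M$ (and of the closure of a suitable $\mathcal{U}$), on the joint continuity of the chart $\varphi$ provided by Lemma~\ref{Lemma:Inclusion of backward iteration}, and on the continuous dependence of $E^{cu}(x,g)$ and $Dg\big|_{E^{cu}}$ furnished by Lemma~\ref{Lemma:Partial hyperbolicity is open}. Once that uniform bound is in hand, the rest is a straightforward multiplicative bookkeeping argument.
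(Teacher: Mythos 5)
The paper does not actually prove this lemma: it is stated in the Preliminaries as background recalled from \cite{HPS} and \cite{PS}, so there is no in-text argument to compare yours against. Your proposal is the standard proof of this plaque-contraction statement, and Steps 1--3 are essentially right: the uniform $(1+\varepsilon)$-Lipschitz comparison between $g^{-1}$ on a small $W^{cu}$-plaque and its linearization $Dg^{-1}\big|_{E^{cu}}$ (using compactness of $M$, joint continuity of $\varphi$, and the fact that $\|Dg^{-1}\big|_{E^{cu}}\|$ is uniformly bounded below so that an additive $C^{1}$-error converts to a multiplicative one), the local invariance from Lemma~\ref{Lemma:Inclusion of backward iteration}(2) to keep iterates inside the range of validity, and the multiplicative induction on $k$.

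There is, however, one step that fails as written: the claim that the leftover factor $\lambda^{-1}$ in $\rho_{m}\le\lambda^{-1}\lambda_{1}^{m}$ can be ``absorbed into a further shrinking of the initial radius $r_{0}$.'' The desired conclusion is $g^{-m}(W^{cu}_{r_{0}}(x,g))\subset W^{cu}_{\lambda_{1}^{m}r_{0}}(g^{-m}(x),g)$ with the \emph{same} $r_{0}$ on both sides, so replacing $r_{0}$ by $cr_{0}$ rescales the domain and the target radius identically and leaves the offending ratio $\rho_{m}/\lambda_{1}^{m}=\lambda^{-1}>1$ untouched; for $m=1$, for instance, the hypothesis only gives $\|Dg^{-1}\big|_{E^{cu}(x)}\|<1$, which cannot yield contraction by $\lambda_{1}$ no matter how small $r_{0}$ is. The discrepancy is an off-by-one in the stated hypothesis (the product $\prod_{j=0}^{n}$ has $n+1$ factors bounded by $\lambda^{n}$, while the $m$-fold composition $g^{-m}$ is governed by the $m$ factors $\prod_{j=0}^{m-1}$), inherited from the formulation in \cite{PS}. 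The honest fixes are either to read the hypothesis as $\prod_{j=0}^{n-1}\|Dg^{-1}_{\mid E^{cu}(g^{-j}(x))}\|<\lambda^{n}$, which makes your induction close cleanly with $\rho_{m}\le((1+\varepsilon)\lambda)^{m}\le\lambda_{1}^{m}$, or to note that in every application in this paper the Property SH point satisfies $\|Dg^{-1}_{\mid E^{cu}(g^{-j}(x))}\|<\lambda$ for \emph{each individual} $j$, so that $\prod_{j=0}^{m-1}<\lambda^{m}$ holds anyway and the extra factor never appears. You should state one of these repairs explicitly rather than invoking a rescaling of $r_{0}$.
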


\noindent In the following, we will work with partially hyperbolic diffeomorphisms.
\newpage

\subsection{Accessibility}
\label{subsec:Accessibility} \quad
\newline

\begin{Definition}\label{Definition:Accesible pairs of points}
Let $f\in Diff^{r}(M)$ be a partially hyperbolic diffeomorphism.
Two points $p,q\in M$ are called accessible, if there are points 
$z_{0}=p, z_{1},\dots, z_{l-1}, z_{l}=q, z_{i}\in M$, such that 
$z_{i}\in\mathcal{F}^{\alpha}(z_{i-1},f)$ for $i=1,\dots,l$ and $\alpha =ss$ or $uu$.
\end{Definition}
\noindent The collection of points $z_{0}, z_{1}, \dots , z_{l}$ is called the $us$-path 
connecting $p$ and $q$. Accessibility is an equivalence relation and the collection of points 
accessible from a given point $p$ is called the accessibility class of $p$.
We will denote this class by $\mathcal{C}(p,f)$. The diffeomorphism $f$ is said to have the accessibility property 
if the accessibility class of any point is 
the whole manifold $M$, or, in other words, if  
any two points in $M$ are accessible.

\noindent Next we introduce the notion of accessibility in relation to open sets,
which we use to give a stronger version of Brin's Theorem (See
section~\ref{sec:Property SH and Topologically Mixing}).

\begin{Definition}\label{Definition:Accesible pairs of open sets}
Two open sets $P, Q\subseteq M$ are called accessible, 
if there are points $p\in P$, $q\in Q$, 
such that $p, q$ are accessible.
\end{Definition}
\noindent We will call a diffeomorphism $f$ accessible in relation to open sets 
if any two open sets are accessible.\\

\noindent Obviously accessibility implies accessibility in relation to 
open sets. The converse is not true in general. It is worth noting that
if $f$ is accessible (respectively accessible in relation to open sets) then
$f^{n}$ is accessible (respectively accessible in relation to open sets)
for any $n\in\mathbb{Z}$.

\begin{Remark}
It is not difficult to prove that accessibility in relation to open sets
is equivalent to the existence of a residual set $R$ in $M$ whose points
have a dense accessibility class. Indeed if $\{U_{k}\}_{k\in\mathbb{N}}$
is a countable base of open sets of the manifold and $C_{k}$ is the set 
of points accessible to at least one point in $U_{k}$, we can take $R=\cap_{k\in\mathbb{N}}C_{k}$
as such a residual set.
\end{Remark}

\begin{Lemma}\label{Lemma:Delta accessibility}
Assume that a partially hyperbolic diffeomorphism $f$ has the accessibility property. 
Then for every $\delta>0$ there exist $l>0$ and $R>0$ such that for any $p, q\in M$ 
one can find a $us$-path that starts at $p$, ends within distance $\frac{\delta}{2}$ 
of $q$, and has at most $l$ legs, each of them with length at most $R$.
\end{Lemma}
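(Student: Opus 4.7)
The argument is a standard compactness-continuity upgrade: accessibility provides, for each pair of points, \emph{some} connecting $us$-path, and one uses compactness of $M$ together with continuous dependence of the strong foliations on the basepoint (Theorem~\ref{Theorem:Continous dependence of foliations}) to promote this to the required uniform bounds.

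First, I fix $\delta>0$ and choose a finite $\eta$-net $\{x_{1},\dots,x_{N}\}\subset M$, with $\eta<\delta/4$ to be shrunk later; the balls $B(x_{i},\eta)$ cover $M$. For each ordered pair $(i,j)$, accessibility gives a $us$-path $\gamma_{ij}$ from $x_{i}$ to $x_{j}$, with $l_{ij}$ legs of maximum length $R_{ij}$. Since there are only $N^{2}$ such pairs,
\[
l_{0}:=\max_{i,j} l_{ij},\qquad R_{0}:=\max_{i,j} R_{ij}
\]
are finite, and I will take $l=l_{0}$ and $R=R_{0}+1$.

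Next, for each reference path $\gamma_{ij}$ with vertices $x_{i}=z_{0},z_{1},\dots,z_{l_{ij}}=x_{j}$, where the segment $z_{k-1}z_{k}$ lies in a leaf of $\mathcal{F}^{\alpha_{k}}$ (with $\alpha_{k}\in\{ss,uu\}$) of length at most $R_{0}$, I construct a \emph{shifted path} parallel to $\gamma_{ij}$ starting at a nearby $p$: iteratively translate within $\mathcal{F}^{\alpha_{k}}$ by a vector close (e.g.\ via parallel transport) to the one used in $\gamma_{ij}$. By Theorem~\ref{Theorem:Continous dependence of foliations} each local strong-stable or strong-unstable holonomy over a leaf-segment of length at most $R_{0}$ depends continuously on the basepoint; composing the $l_{ij}$ such holonomies yields a map $p\mapsto(\text{endpoint})$ that is continuous at $z_{0}=x_{i}$ with value $x_{j}$. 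Hence there exists $\eta_{ij}>0$ such that whenever $p\in B(x_{i},\eta_{ij})$, the shifted path has at most $l_{0}$ legs, each of length at most $R_{0}+1$, and its endpoint lies within $\delta/4$ of $x_{j}$. Taking $\eta:=\min_{i,j}\eta_{ij}$ (also requiring $\eta<\delta/4$) handles all pairs simultaneously.

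For the conclusion, given arbitrary $p,q\in M$ I pick $i,j$ with $p\in B(x_{i},\eta)$ and $q\in B(x_{j},\eta)$; the associated shifted path starts at $p$, has at most $l$ legs of length at most $R$, and its endpoint is within $\delta/4+\eta<\delta/2$ of $q$, as required. \textbf{The main obstacle} lies in the shifted-path construction itself: although each individual local holonomy is continuous on bounded leaf-segments, one has to iterate this continuity $l_{ij}$ times with basepoints that depend on the outputs of earlier holonomies, and then absorb all accumulated errors by a single uniform choice of $\eta$. Finiteness of the net $\{x_{1},\dots,x_{N}\}$ and of each $l_{ij}$ is exactly what makes this possible.
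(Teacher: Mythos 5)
The paper does not supply its own argument for this lemma---it simply cites \cite{P}---and your compactness-plus-continuity proof is exactly the standard argument behind that citation, so it is essentially the same approach and it is sound. One cosmetic point: the ``parallel transport/holonomy'' language in the shifted-path step is looser than necessary; all you need is that, by Theorem~\ref{Theorem:Continous dependence of foliations}, the leaf-ball $\mathcal{F}^{\alpha_{k}}_{R_{0}+1}(z'_{k-1},f)$ is $C^{0}$-close to $\mathcal{F}^{\alpha_{k}}_{R_{0}+1}(z_{k-1},f)$ when $z'_{k-1}$ is close to $z_{k-1}$, hence contains a point near $z_{k}$, and the finite backward choice of tolerances along the at most $l_{0}$ legs then closes the induction exactly as you describe.
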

\begin{proof}
See \cite{P}.
\end{proof}

\begin{Lemma}\label{Lemma:A kind of continuity of accessible paths}
Let $f: M\rightarrow M$ be a partially hyperbolic accessible diffeomorphism. 
Given $p_{0}\in M$, there is $q_{0}\in M$ and a $us$-path 
$z_{0}(q_{0})=p_{0}, z_{1}(q_{0}),\dots , z_{N}(q_{0})=q_{0}$ 
connecting $p_{0}$ to $q_{0}$ and satisfying the following 
property: for any $\epsilon >0$ there exist $\delta >0$ and $L>0$ 
such that for every $x\in B(q_{0},\delta)$ there exists a $us$-path 
$z_{0}(x)=p_{0}, z_{1}(x), \dots ,z_{N}(x)=x$ connecting $p_{0}$ to $x$ and 
such that $dist (z_{j}(x), z_{j}(q_{0}))<\epsilon$ and $dist_{\mathcal{F}^{\alpha}}(z_{j-1}(x), z_{j}(x))< L$ 
for $j=1, \dots ,N$ where $dist_{\mathcal{F}^{\alpha}}$ denotes the distance along the 
strong (either stable or unstable) leaf common to the two points.
\end{Lemma}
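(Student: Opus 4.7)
The plan is to build $q_{0}$ together with a base us-path from $p_{0}$ to $q_{0}$ such that the endpoint of the path, viewed as a continuous function of the intermediate nodes, is an \emph{open} map at the base parameter value; the lemma then follows by reading off $\delta$, $\epsilon$ and $L$ from the local openness. Concretely, I fix an itinerary $(\alpha_{0},\ldots,\alpha_{N-1})\in\{ss,uu\}^{N}$ and parametrize a us-path with this itinerary starting at $p_{0}$ by its intermediate nodes $(z_{1},\ldots,z_{N})$, with $z_{j+1}\in\mathcal{F}^{\alpha_{j}}(z_{j})$. Continuity of the two strong foliations (Theorem~\ref{Theorem:Continous dependence of foliations}) makes the endpoint map $\Psi(z_{1},\ldots,z_{N})=z_{N}$ continuous on the product of leaf-balls that parametrizes admissible paths. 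The task is to choose the itinerary and a specific base parameter so that $\Psi$ is locally surjective near that value; that value will be $q_{0}$ and the components of the base parameter will be $z_{j}(q_{0})$.

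The construction of the base path rests on accessibility. Start with any us-path from $p_{0}$ to some point $\tilde q$. Enumerate all pairs $(N,I)$ of itinerary lengths and itineraries (countable family) and all radii $R\in\mathbb{N}$, and let $A_{N,I,R}$ denote the image of the restricted endpoint map, where leaf-balls are truncated at radius $R$. Since every point of $M$ is accessible from $p_{0}$, the countable union $\bigcup_{N,I,R}A_{N,I,R}$ equals $M$. By the Baire category theorem some $A_{N,I,R}$ is somewhere dense, and then — using continuity of $\Psi$ on a compact product of leaf-balls — one extracts an itinerary and a parameter value at which the image of $\Psi$ contains an open set. I pick $q_{0}$ inside that open set and take the base us-path accordingly.

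Given $\epsilon>0$, I then choose a product of leaf-balls around the base parameter, small enough that each $z_{j}$ moves by less than $\epsilon$ inside it. Local openness of $\Psi$ on this product supplies $\delta>0$ with $B(q_{0},\delta)\subset\Psi(\mathrm{product})$. For $x\in B(q_{0},\delta)$, lifting $x$ to a parameter $(z_{1}(x),\ldots,z_{N}(x))$ inside the product gives a us-path $p_{0}=z_{0}(x),z_{1}(x),\ldots,z_{N}(x)=x$ with each $z_{j}(x)$ within $\epsilon$ of $z_{j}(q_{0})$. The leaf-distance bound $L$ is then just the maximum leaf-ball radius used in the product.

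The main obstacle is strengthening "some $A_{N,I,R}$ contains an open set" to genuine local openness of $\Psi$ at a specific parameter above $q_{0}$: since the foliations $\mathcal{F}^{ss},\mathcal{F}^{uu}$ are only continuous (with $C^{1}$ leaves), neither Sard nor the classical implicit function theorem applies directly. I would handle this by iterating the path-extension construction — appending additional $ss$/$uu$ legs one at a time, and showing that at each step either the interior of the reachable set strictly enlarges or accessibility is contradicted. This iteration, combined with Lemma~\ref{Lemma:Delta accessibility} to control the number and length of legs, is the technical heart of the argument and is where the delicate book-keeping between the continuous dependence of the foliations on the base point and the openness of $\Psi$ has to be checked carefully.
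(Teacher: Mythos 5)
The paper does not actually prove this lemma; it defers to \cite{S}. Your framework---parametrizing $us$-paths by their intermediate nodes, noting that the endpoint map $\Psi$ is continuous on a compact product of leaf-balls, writing $M$ as the countable union of the reachable sets $A_{N,I,R}$ and invoking Baire to find one with nonempty interior---is the standard opening move and is sound as far as it goes (each $A_{N,I,R}$ is the continuous image of a compact set, hence closed, so Baire applies). The trouble is that everything the lemma actually asserts is concentrated in the step you yourself flag as ``the technical heart'': passing from ``$\Psi(\mathrm{compact\ product})$ contains a neighbourhood of $q_{0}$'' to the existence of a \emph{single} base parameter $(z_{1}(q_{0}),\dots,z_{N}(q_{0}))$ over $q_{0}$ such that every $x$ near $q_{0}$ admits a lift within $\epsilon$ of that fixed parameter. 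Continuity plus surjectivity onto a neighbourhood gives no control on where the fibres over nearby points sit relative to a chosen point of the fibre over $q_{0}$: the fibre $\Psi^{-1}(q_{0})$ may be large, and a compactness argument only shows that lifts of $x_{k}\rightarrow q_{0}$ accumulate on \emph{some} point of $\Psi^{-1}(q_{0})$, possibly a different one for different sequences. (A one-dimensional model: $P=[-2,-1]\cup[1,2]$, $\Psi(t)=t+1$ on the left piece and $\Psi(t)=t-1$ on the right; the image contains a neighbourhood of $0$, yet no single preimage of $0$ attracts lifts of all nearby points.) The lemma requires the designated path to lie in the Kuratowski lower limit of the fibres, and that simply does not follow from continuity together with somewhere-density of the image.

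Your proposed repair---appending legs one at a time and arguing that ``either the interior of the reachable set strictly enlarges or accessibility is contradicted''---does not engage with this obstruction: it is an argument about the growth of the \emph{image} of $\Psi$, i.e.\ about which points are reachable at all, whereas the missing statement concerns the local structure of $\Psi$ near one point of one fibre. Even if after finitely many extensions the image were all of $M$, you would still not know that points near $q_{0}$ lift close to a fixed lift of $q_{0}$, which is exactly the inequality $dist(z_{j}(x),z_{j}(q_{0}))<\epsilon$ you must prove. Closing the gap requires an input beyond continuity of the foliations---for instance a local accessibility statement at $q_{0}$ (every $x$ near $q_{0}$ is joined to $q_{0}$ by a $us$-path with uniformly few, uniformly short legs remaining in a small neighbourhood of $q_{0}$), which one then concatenates onto the fixed base path after padding it with trivial legs, or the holonomy constructions carried out in \cite{S}. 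As written, the proposal is an accurate diagnosis of the difficulty rather than a proof of the lemma.
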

\begin{proof}
See \cite{S}.
\end{proof}

\begin{Lemma}\label{Lemma:Number and lengths of paths legs can be limited}
Assume that a partially hyperbolic diffeomorphism $f$ has the accessibility property. 
Then there exist $l_{0}>0$ and $R_{0}>0$ such that for any $p, q \in M$ one can find 
a $us$-path that starts at $p$, ends at $q$, and has at most $l_{0}$ legs, each of them 
with length at most $R_{0}$.
\end{Lemma}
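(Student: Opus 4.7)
The plan is to combine Lemma~\ref{Lemma:Delta accessibility} with the local trivialization of accessibility provided by Lemma~\ref{Lemma:A kind of continuity of accessible paths}, routing every $us$-path through a single fixed base point $p_0$. The key observation is that Lemma~\ref{Lemma:A kind of continuity of accessible paths} upgrades accessibility to a uniform (bounded number of legs, bounded leg length) statement, but only over a neighborhood of the specific companion point $q_0 = q_0(p_0)$; Lemma~\ref{Lemma:Delta accessibility} is then deployed precisely to close the small gap between a generic target point and this privileged neighborhood.

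First, I fix any $p_0\in M$ and invoke Lemma~\ref{Lemma:A kind of continuity of accessible paths} (with some fixed choice of $\epsilon$) to extract the companion point $q_0$, the integer $N$ counting the legs of the distinguished path from $p_0$ to $q_0$, and constants $\delta_0, L_0>0$ such that every $x\in B(q_0,\delta_0)$ is connected to $p_0$ by a $us$-path of exactly $N$ legs, each of length at most $L_0$.

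Next, I apply Lemma~\ref{Lemma:Delta accessibility} with parameter $\delta = 2\delta_0$, producing $l_1, R_1 > 0$ such that any two points in $M$ can be joined by a $us$-path of at most $l_1$ legs of length at most $R_1$ whose endpoint misses the target by less than $\delta_0$. Given an arbitrary $q\in M$, I apply this with starting point $q$ and target $q_0$; the endpoint $b'$ then lies in $B(q_0,\delta_0)$. Reversing the path is legitimate because strong stable and unstable leaves are equivalence classes, so $z_i\in\mathcal{F}^{\alpha}(z_{i-1})$ if and only if $z_{i-1}\in\mathcal{F}^{\alpha}(z_i)$, and the intrinsic leaf distance is symmetric; this produces a $us$-path from $b'$ to $q$ with the same bounds. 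Concatenating with the path from $p_0$ to $b'$ supplied in the previous paragraph yields a $us$-path from $p_0$ to $q$ with at most $N+l_1$ legs, each of length at most $\max(L_0,R_1)$.

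Finally, applying the reversal argument with an arbitrary $p\in M$ in place of $q$ yields a bounded $us$-path from $p$ to $p_0$ with the same uniform bounds, and concatenating the two halves at $p_0$ produces a $us$-path from $p$ to $q$ with at most $l_0 := 2(N + l_1)$ legs of length at most $R_0 := \max(L_0, R_1)$, as required. I expect the main subtlety to be conceptual rather than computational: Lemma~\ref{Lemma:A kind of continuity of accessible paths} trivializes a neighborhood only of the specific point $q_0(p_0)$ and not of $p_0$ itself, so one is forced to funnel every $us$-path through the privileged ball $B(q_0,\delta_0)$, and the approximate accessibility afforded by Lemma~\ref{Lemma:Delta accessibility} is exactly the ingredient that makes this funneling possible.
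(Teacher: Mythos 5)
Your proof is correct and follows essentially the same route as the paper: fix $p_0$, use Lemma~\ref{Lemma:A kind of continuity of accessible paths} to get the privileged neighborhood of $q_0$ with uniformly bounded $N$-leg paths back to $p_0$, use Lemma~\ref{Lemma:Delta accessibility} to steer both $p$ and $q$ into that neighborhood, and concatenate through $p_0$ after reversing, yielding the same bounds $l_0 = 2(N+l_1)$ and $R_0 = \max(L_0,R_1)$. Your explicit justification of path reversal and the careful $\delta = 2\delta_0$ bookkeeping are minor refinements of what the paper leaves implicit.
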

\begin{proof}
Fix $p_{0}\in M$. Let $q_{0}\in M$ and a $us$-path 
$z_{0}(q_{0})=p_{0}, z_{1}(q_{0}), \dots ,z_{N}(q_{0})=q_{0}$ be as 
in Lemma~\ref{Lemma:A kind of continuity of accessible paths}. Let $\epsilon >0$. Take 
$\delta >0$ and $L>0$ as in Lemma~\ref{Lemma:A kind of continuity of accessible paths}. For 
this $\delta >0$ take $l>0$ and $R>0$ as in Lemma~\ref{Lemma:Delta accessibility}. Next, set 
$l_{0}=2l+2N$ and $R_{0}=max \lbrace R, L\rbrace$. Let $p, q\in M$. From 
Lemma~\ref{Lemma:Delta accessibility} we know that there exists a $us$-path that starts 
at $p$ (respectively $q$), ends within distance 
$\delta$ of $q_{0}$, say at $p_{1}$ (respectively $q_{1}$), and has at most $l$ legs, each 
of them with length at most $R$.\\
From Lemma~\ref{Lemma:A kind of continuity of accessible paths} there exist 
a $us$-path $z_{0}(p_{1})=p_{0}, z_{1}(p_{1}), \dots ,z_{N}(p_{1})=p_{1}$ connecting $p_{0}$ 
to $p_{1}$ and a $us$-path $z_{0}(q_{1})=p_{0}, z_{1}(q_{1}), \dots ,z_{N}(q_{1})=q_{1}$ 
connecting $p_{0}$ to $q_{1}$.\\
Thus,
\[
p_{1}=z_{N}(p_{1}), z_{N-1}(p_{1}), \dots, z_{0}(p_{1})=p_{0}=z_{0}(q_{1}), z_{1}(q_{1}), \dots ,z_{N}(q_{1})=q_{1}
\]
is a $us$-path connecting $p_{1}$ to $q_{1}$, and it has $2N$ legs, each of them with length 
at most $L$. Hence, using the $us$-path connecting $p$ with $p_{1}$ and the $us$-path connecting $q$ with $q_{1}$, having at most
$l$ legs, each of them with length at most $R$, we 
have completed the proof.
\end{proof}
\begin{Corollary}\label{Corollary:Number and lengths of paths legs can be robustly limited}
Let $f: M\rightarrow M$ be a partially hyperbolic accessible diffeomorphism. Then there 
exist $l_{1}>0$ and $R_{1}>0$ such that for any $p, q \in M$ one can find a $us$-path 
$z_{0}=p, z_{1},  \dots , z_{l-1}, z_{l}=q, l\leq l_{1}$, that starts at $p$, ends 
at $q$, such that $q\in \mathcal{F}^{ss}_{R_{1}}(z_{l-1}, f)$, and each leg has length at most $R_{1}$.
\end{Corollary}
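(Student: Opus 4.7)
The plan is to derive this statement as an immediate refinement of Lemma~\ref{Lemma:Number and lengths of paths legs can be limited}, modifying the $us$-path only when its last leg happens to be unstable.

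Since $f$ is accessible, I would first invoke Lemma~\ref{Lemma:Number and lengths of paths legs can be limited} to obtain constants $l_{0}>0$ and $R_{0}>0$ such that every pair of points $p,q\in M$ admits a $us$-path $z_{0}=p, z_{1},\ldots, z_{l}=q$ with $l\leq l_{0}$ legs, each of length at most $R_{0}$. If this path already satisfies $z_{l}\in\mathcal{F}^{ss}(z_{l-1},f)$, then the bound on the last leg's length gives $q\in\mathcal{F}^{ss}_{R_{0}}(z_{l-1},f)$ and we are done.

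Otherwise the last leg is unstable, and I would append a single degenerate stable leg by setting $z_{l+1}:=z_{l}=q$. Since $z_{l+1}=z_{l}$ lies trivially in $\mathcal{F}^{ss}(z_{l},f)$, the extended sequence is still a valid $us$-path in the sense of Definition~\ref{Definition:Accesible pairs of points}, the new leg has length zero, and $q\in \mathcal{F}^{ss}_{0}(z_{l},f)\subset\mathcal{F}^{ss}_{R_{0}}(z_{l},f)$. Taking $l_{1}:=l_{0}+1$ and $R_{1}:=R_{0}$ then fulfils all the requirements.

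The only delicate point to verify is that Definition~\ref{Definition:Accesible pairs of points} admits a leg of length zero; this is immediate, since $z_{i}\in\mathcal{F}^{\alpha}(z_{i-1},f)$ holds trivially whenever $z_{i}=z_{i-1}$, for either $\alpha\in\{ss,uu\}$. There is therefore no substantive obstacle: the corollary is essentially a bookkeeping reformulation of Lemma~\ref{Lemma:Number and lengths of paths legs can be limited} that forces the $us$-path to terminate at $q$ within a strong stable ball of controlled radius, a form suited to the later arguments where stable holonomies are applied at the endpoint.
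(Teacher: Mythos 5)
Your proof is correct and follows essentially the same route as the paper: both deduce the statement from Lemma~\ref{Lemma:Number and lengths of paths legs can be limited} by appending one extra stable leg and taking $l_{1}=l_{0}+1$, $R_{1}=R_{0}$. The only (harmless) difference is that the paper first routes the path to an auxiliary point $q_{0}\in\mathcal{F}^{ss}_{R_{0}}(q,f)$ so that the final leg is a genuine stable segment ending at $q$, whereas you keep the original path to $q$ and append a degenerate zero-length stable leg when that path happens to end with an unstable one.
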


\begin{proof}
Let $l_{0}$ and $R_{0}$ be as in Lemma~\ref{Lemma:Number and lengths of paths legs can be limited}. Set 
$l_{1}=l_{0}+1$ and set $R_{1}=R_{0}$. Let $p, q \in M$. Take $q_{0}\in\mathcal{F}^{ss}_{R_{0}}(q,f)$. From 
Lemma~\ref{Lemma:Number and lengths of paths legs can be limited} we know that one can find a $us$-path 
$z_{0}=p, z_{1}, \dots , z_{l-1}=q_{0}$ that starts at $p$, ends at $q_{0}$, and has at most $l_{0}$ legs, each
of them with length at most $R_{0}=R_{1}$. Therefore,  
 $z_{0}=p, z_{1}, \dots , z_{l-1}=q_{0}, z_{l}=q$ is a $us$-path that starts at $p$, ends 
at $q$, and has at most $l_{1}$ legs, each of them with length at most $R_{1}$. 
\end{proof}

Now, using the last Corollary, we will prove that if $f$ is accessible then,
robustly, for a fixed $r>0$ and any pair of points $p,q\in M$
there exists a path connecting $p$ to the center unstable
disc of radius $r$ centred at $q$.

\begin{Lemma}\label{Lemma:Robust uniform continuity of foliations}
Let $f\in Diff^{r}(M)$ be a partially hyperbolic diffeomorphism. Given $R_{0}>0$ and $d_{0}>0$ 
there exist $\delta_{0}>0$ and a neighbourhood $\mathcal{U}(f)$ such that for any $g\in\mathcal{U}(f)$ and 
for every $x, y \in M$ such that $d(x,y)<\delta_{0}$ we have that $\mathcal{F}^{\alpha}_{R_{0}}(x,g)$ and 
$\mathcal{F}^{\alpha}_{R_{0}}(y,f)$ are $d_{0}$-close, $\alpha =ss$ or $uu$.
\end{Lemma}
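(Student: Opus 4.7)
The strategy is to upgrade the pointwise joint continuity of the strong foliations given by Theorem~\ref{Theorem:Continous dependence of foliations} to a uniform statement by compactness of $M$, and then to compare the two discs $\mathcal{F}^\alpha_{R_{0}}(x,g)$ and $\mathcal{F}^\alpha_{R_{0}}(y,f)$ to a common ``reference'' disc $\mathcal{F}^\alpha_{R_{0}}(x_i,f)$ via the triangle inequality for the $C^{0}$ (or Hausdorff) distance between local submanifolds in $M$.

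First, I fix $\alpha=ss$ (the case $\alpha=uu$ is symmetric). For each $z\in M$, Theorem~\ref{Theorem:Continous dependence of foliations} asserts that the assignment $(y,g)\mapsto \mathcal{F}^\alpha_{R_{0}}(y,g)$ is continuous at $(z,f)$. Hence I can choose an open neighborhood $V_z\subseteq M$ of $z$ and a $C^{r}$ neighborhood $\mathcal{U}_z\subseteq \mathcal{U}$ of $f$ (with $\mathcal{U}$ as in Lemma~\ref{Lemma:Partial hyperbolicity is open}, so that the splitting and the local leaves are defined for every $g\in\mathcal{U}_z$) such that for every $y\in V_z$ and every $g\in\mathcal{U}_z$ the disc $\mathcal{F}^\alpha_{R_{0}}(y,g)$ is $d_{0}/2$-close to $\mathcal{F}^\alpha_{R_{0}}(z,f)$.

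Next, by compactness of $M$ I extract a finite subcover $V_{z_1},\dots,V_{z_k}$ from $\{V_z\}_{z\in M}$, and I let $\delta_{0}>0$ be a Lebesgue number for this cover. I set
\[
\mathcal{U}(f)=\bigcap_{i=1}^{k}\mathcal{U}_{z_i},
\]
which is an open neighborhood of $f$. Now take any two points $x,y\in M$ with $d(x,y)<\delta_{0}$; by the choice of $\delta_{0}$ there exists an index $i$ such that $\{x,y\}\subseteq V_{z_i}$. For any $g\in\mathcal{U}(f)\subseteq \mathcal{U}_{z_i}$ the construction yields
\[
\mathrm{dist}\bigl(\mathcal{F}^\alpha_{R_{0}}(x,g),\,\mathcal{F}^\alpha_{R_{0}}(z_i,f)\bigr)<\frac{d_{0}}{2},\qquad \mathrm{dist}\bigl(\mathcal{F}^\alpha_{R_{0}}(y,f),\,\mathcal{F}^\alpha_{R_{0}}(z_i,f)\bigr)<\frac{d_{0}}{2},
\]
where the latter uses that $f\in\mathcal{U}_{z_i}$ and $y\in V_{z_i}$. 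The triangle inequality then gives that $\mathcal{F}^\alpha_{R_{0}}(x,g)$ and $\mathcal{F}^\alpha_{R_{0}}(y,f)$ are $d_{0}$-close, as desired.

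The only genuinely delicate point is the first step: deducing from the ``continuous dependence on compact subsets'' in Theorem~\ref{Theorem:Continous dependence of foliations} that one really has joint continuity of the map $(y,g)\mapsto \mathcal{F}^\alpha_{R_{0}}(y,g)$ in the topology one is using to measure $d_{0}$-closeness. This is where one has to be careful to invoke the statement correctly (identifying the local leaves of radius $R_{0}$ with their images under the continuous parametrization, so that closeness in the topology on $C^{1}$-embeddings implies closeness as subsets of $M$); once this is established, the rest of the argument is a standard compactness/Lebesgue-number exercise.
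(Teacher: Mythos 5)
Your argument is correct and is essentially the same as the paper's: both proofs use the (joint) continuous dependence of the local strong leaves on the base point and the diffeomorphism to get, around each point $z$, a neighbourhood $V_z\times\mathcal{U}_z$ on which all discs are $d_0/2$-close to the reference disc $\mathcal{F}^{\alpha}_{R_0}(z,f)$, and then conclude by compactness, a Lebesgue number, and the triangle inequality. The paper cites the Stable Manifold Theorem where you cite Theorem~\ref{Theorem:Continous dependence of foliations}, but these supply the same continuity statement, so the two proofs coincide in substance.
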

\begin{proof}
From Stable Manifold Theorem we know that for every $x\in M$ there exist $r_{x}>0$ and a neighbourhood 
$\mathcal{U}_{x}(f)$ such that for any $g\in\mathcal{U}_{x}(f)$ and for every $y\in M$ such that 
$d(x,y)<r_{x}$ we have that $\mathcal{F}^{\alpha}_{R_{0}}(x,f)$ and $\mathcal{F}^{\alpha}_{R_{0}}(y,g)$ are 
$\frac{d_{0}}{2}$-close, $\alpha =ss$ or $uu$. Thus, for any $g\in\mathcal{U}_{x}(f)$ and for every 
$y, z\in B(x,r_{x})$ we get $\mathcal{F}^{\alpha}_{R_{0}}(y,g)$ and $\mathcal{F}^{\alpha}_{R_{0}}(z,f)$ are 
$d_{0}$-close, $\alpha =ss$ or $uu$. Since $M$ is compact, there are $x_{1},x_{2},\dots, x_{n}\in M$ such that 
\[
M\subset\bigcup^{n}_{i=1}B(x_{i},r_{x_{i}}).
\]
Let $\delta_{0}>0$ be a Lebesgue number of this cover and take 
\[
\mathcal{U}(f)=\bigcap^{n}_{i=1}\mathcal{U}_{x_{i}}(f).
\]
Thus, if $d(x,y)<\delta_{0}$ then $x,y\in B(x_{i}, r_{x_{i}})$ for some $i=1,2,\dots ,n$. Hence we have that 
$\mathcal{F}^{\alpha}_{R_{0}}(x,f)$ and $\mathcal{F}^{\alpha}_{R_{0}}(y,g)$ are $d_{0}$-close, $\alpha =ss$ 
or $uu$, for any $g\in\mathcal{U}(f)\subset\mathcal{U}_{x_{i}}(f)$. 
\end{proof}

\begin{Lemma}\label{Lemma:Robust transversal intersection}
Let $f\in Diff^{r}(M)$ be a partially hyperbolic diffeomorphism. Given $R_{0}>0$ and $r>0$ there exist 
$\epsilon>0$, $\delta_{0}>0$ and a neighbourhood $\mathcal{V}(f)$ such that for any $g\in\mathcal{V}(f)$ 
it follows that for any $x,y\in M$ with $d(x,y)<\delta_{0}$ the following holds:
\[
\mathcal{F}^{ss}_{R_{0}+\epsilon}(x,g)\pitchfork W^{cu}_{r}(z,g)\neq \emptyset\hspace{0.5 cm}\text{for any}\hspace{0.5 cm} z\in\mathcal{F}^{ss}_{R_{0}}(y,f).
\]
\end{Lemma}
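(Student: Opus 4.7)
The plan is to combine a robust, uniform local product structure between strong stable leaves and center-unstable disks with the $C^0$ proximity of leaves provided by Lemma \ref{Lemma:Robust uniform continuity of foliations}. Intuitively, for any $z\in M$ the tangent spaces $E^{ss}(z,g)$ and $E^{cu}(z,g)$ span $T_zM$, so $\mathcal{F}^{ss}$ and $W^{cu}$ cross transversally at $z$; the goal is to make this crossing uniformly robust in both the diffeomorphism and the base point, and then transport it to the data of the statement via proximity of leaves.

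First I would establish the uniform robust product structure. By Lemma \ref{Lemma:Partial hyperbolicity is open}, the splitting $E^{ss}(g)\oplus E^{cu}(g)$ depends continuously on $(z,g)$, so on the compact manifold $M$ and on a small $C^{1}$-neighborhood of $f$ the angle between $E^{ss}$ and $E^{cu}$ is uniformly bounded away from zero. Combining this with the continuous dependence of $\mathcal{F}^{ss}(\cdot,g)$ (Theorem \ref{Theorem:Continous dependence of foliations}) and of $W^{cu}(\cdot,g)$ (Lemma \ref{Lemma:Inclusion of backward iteration}) on $(z,g)$, a standard graph-transversality argument in charts adapted to the splitting yields $\epsilon>0$, $\rho>0$ and a neighborhood $\mathcal{V}_{1}(f)$ such that, for every $g\in\mathcal{V}_{1}(f)$ and all $z,w\in M$ with $d(z,w)<\rho$, the local disk $\mathcal{F}^{ss}_{\epsilon}(w,g)$ meets $W^{cu}_{r}(z,g)$ transversally in exactly one point. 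Concretely, in an exponential chart at $z$ adapted to $E^{ss}(z,g)\oplus E^{cu}(z,g)$, $W^{cu}_{r}(z,g)$ is a graph over the $E^{cu}$-factor and $\mathcal{F}^{ss}_{\epsilon}(w,g)$ is a graph over the $E^{ss}$-factor with uniformly small derivatives, so they cross by the implicit function theorem.

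Next I would invoke Lemma \ref{Lemma:Robust uniform continuity of foliations} with input $d_{0}:=\rho/2$ to obtain $\delta_{0}>0$ and a neighborhood $\mathcal{U}(f)$ such that, for any $g\in\mathcal{U}(f)$ and any $x,y\in M$ with $d(x,y)<\delta_{0}$, the disks $\mathcal{F}^{ss}_{R_{0}}(x,g)$ and $\mathcal{F}^{ss}_{R_{0}}(y,f)$ are $d_{0}$-close. Set $\mathcal{V}(f):=\mathcal{V}_{1}(f)\cap\mathcal{U}(f)$. Given any $z\in\mathcal{F}^{ss}_{R_{0}}(y,f)$, pick $z'\in\mathcal{F}^{ss}_{R_{0}}(x,g)$ with $d(z,z')<d_{0}<\rho$. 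By the product structure of the previous step applied to the pair $(z,z')$, the local disk $\mathcal{F}^{ss}_{\epsilon}(z',g)$ intersects $W^{cu}_{r}(z,g)$ transversally. Since $z'$ lies in the leaf $\mathcal{F}^{ss}(x,g)$ at leaf-distance at most $R_{0}$ from $x$, we have $\mathcal{F}^{ss}_{\epsilon}(z',g)\subseteq\mathcal{F}^{ss}_{R_{0}+\epsilon}(x,g)$, which delivers the transversal intersection claimed.

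I expect the first step --- the uniform robust product structure --- to be the technical core. The point is not transversality itself (which is pointwise clear) but the existence of a single triple $\rho,\epsilon,\mathcal{V}_{1}(f)$ that works simultaneously at every base point and for every nearby perturbation. This rests on the compactness of $M$, a uniform lower bound on the angle between $E^{ss}$ and $E^{cu}$, and the joint $C^{1}$ continuity of both foliations and the locally invariant center-unstable discs in $(z,g)$.
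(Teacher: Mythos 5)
Your proposal is correct and follows essentially the same route as the paper: a uniform, robust local product structure (the paper attributes it directly to the Stable Manifold Theorem, you flesh out the graph-transversality argument) combined with the $d_{0}$-closeness of the $R_{0}$-disks from Lemma~\ref{Lemma:Robust uniform continuity of foliations}, and the observation that a local stable disk of radius $\epsilon$ centred at a point of $\mathcal{F}^{ss}_{R_{0}}(x,g)$ sits inside $\mathcal{F}^{ss}_{R_{0}+\epsilon}(x,g)$.
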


\begin{proof}
Take $\epsilon >0$ given by Stable Manifold Theorem. There exists a neighbourhood $\mathcal{U}_{1}(f)$ such 
that $\epsilon >0$ can to be taken for any $g\in\mathcal{U}_{1}(f)$.
Let $d_{0}>0$ be such that for any $g\in\mathcal{U}_{1}(f)$ it follows that if $d(x,y)<d_{0}$ then 
\[
\mathcal{F}^{ss}_{\epsilon}(x,g)\pitchfork W^{cu}_{r}(y,g)\neq\emptyset.
\]
Consider $\mathcal{V}(f)\subset\mathcal{U}_{1}(f)$ and $\delta_{0}>0$ given by 
Lemma~\ref{Lemma:Robust uniform continuity of foliations}. Thus, if $g\in\mathcal{V}(f)$ and $x,y\in M$ 
with $d(x,y)< \delta_{0}$ we have that $\mathcal{F}^{ss}_{R_{0}}(x,g)$ and $\mathcal{F}^{ss}_{R_{0}}(y,f)$ 
are $d_{0}$-close and therefore 
\[
\mathcal{F}^{ss}_{R_{0}+\epsilon}(x,g)\pitchfork W^{cu}_{r}(z,g)\neq \emptyset\hspace{0.5 cm}\text{for any}\hspace{0.5 cm} z\in\mathcal{F}^{ss}_{R_{0}}(y,f).
\]

\end{proof}

\noindent Now we give an easy but interesting consequence of the last two Lemmas,
useful for our purposes in Section~\ref{sec:Property SH and Accessibility}.

\begin{Proposition}\label{Proposition:Robustly connecting to the center unstable}
Let $f\in Diff^{r}(M)$ be a partially hyperbolic accessible diffeomorphism. Given $r>0$ 
there exist a neighbourhood $\mathcal{U}(f)$, $l>0$ and $R>0$ such that for any $g\in\mathcal{U}(f)$ it 
follows that for every $p,q\in M$ there exists $q'\in W^{cu}_{r}(q,g)$ such that one can find a $us$-path 
by $g$ that starts at $p$, ends at $q'$, and has at most $l$ legs, each of them with length at most $R$. 
\end{Proposition}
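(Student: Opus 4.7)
The plan is to take an $f$-path with bounded complexity connecting $p$ to $q$, shadow it by a $g$-path whose vertices stay close to the original ones, and use a final transversal intersection to land on $W^{cu}_r(q,g)$.

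First, I would apply Corollary~\ref{Corollary:Number and lengths of paths legs can be robustly limited} to the accessible diffeomorphism $f$ to obtain $l_1 > 0$ and $R_1 > 0$ such that, for any $p, q \in M$, there is an $f$-path $z_0 = p, z_1, \dots, z_{l-1}, z_l = q$ with $l \leq l_1$, each leg of length at most $R_1$, and with $q \in \mathcal{F}^{ss}_{R_1}(z_{l-1}, f)$. The fact that the last leg is a \emph{stable} leg ending at $q$ is exactly what is needed in order to later apply Lemma~\ref{Lemma:Robust transversal intersection}. With this $R_1$ and the given $r > 0$, I would invoke Lemma~\ref{Lemma:Robust transversal intersection} to obtain $\epsilon > 0$, $\delta_0 > 0$, and a neighbourhood $\mathcal{V}(f)$. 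The strategy then reduces to producing, for $g$ near $f$, a $g$-path $\tilde z_0 = p, \tilde z_1, \dots, \tilde z_{l-1}$ with $d(\tilde z_{l-1}, z_{l-1}) < \delta_0$; once this is achieved, Lemma~\ref{Lemma:Robust transversal intersection} applied with $x = \tilde z_{l-1}$, $y = z_{l-1}$, $z = q$ yields a point $q' \in \mathcal{F}^{ss}_{R_1 + \epsilon}(\tilde z_{l-1}, g) \pitchfork W^{cu}_r(q, g)$, and appending this final stable leg gives the required $us$-path for $g$ with at most $l_1 + 1$ legs of length at most $R_1 + \epsilon$.

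To construct the shadowing path I would iterate Lemma~\ref{Lemma:Robust uniform continuity of foliations} backwards across the $l_1$ legs. Define tolerances $\delta_{l-1} = \delta_0$ and, inductively on decreasing $i$, choose $\delta_i > 0$ and a neighbourhood $\mathcal{U}_i(f)$ such that, whenever $d(\tilde z_i, z_i) < \delta_i$ and $g \in \mathcal{U}_i(f)$, the $g$-leaf $\mathcal{F}^{\alpha}_{R_1}(\tilde z_i, g)$ is close enough (in the $C^{0}$ sense) to the $f$-leaf $\mathcal{F}^{\alpha}_{R_1}(z_i, f)$ that there exists a point on $\mathcal{F}^{\alpha}_{R_1 + \epsilon'}(\tilde z_i, g)$ within $\delta_{i+1}$ of $z_{i+1}$, which I take as $\tilde z_{i+1}$. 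Since $l_1$ is a fixed finite number, this recursive procedure terminates and produces a single tolerance $\delta_*$ and neighbourhood $\mathcal{U}(f) = \mathcal{V}(f) \cap \bigcap_i \mathcal{U}_i(f)$ uniform in $p, q$. The initial condition $d(\tilde z_0, z_0) < \delta_*$ is trivially satisfied because $\tilde z_0 = z_0 = p$.

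Setting $l = l_1 + 1$ and $R = R_1 + \epsilon$ (absorbing $\epsilon'$ into $\epsilon$) completes the construction. The main obstacle is the bookkeeping of the error accumulation across the $l_1$ legs: I need the $C^{0}$-closeness of foliations at each shadowing step to translate into actual base-point closeness for the next step, and this requires the backwards recursive choice of $\delta_i$'s and neighbourhoods described above. Once that choice is made uniformly, the transversal intersection at the final step handles the residual drift between $\tilde z_{l-1}$ and $z_{l-1}$, which is precisely what Lemma~\ref{Lemma:Robust transversal intersection} was designed for.
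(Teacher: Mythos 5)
Your proposal is correct and follows essentially the same route as the paper: both take the $f$-path with uniformly bounded legs whose last leg is a stable leg ending at $q$ (Corollary~\ref{Corollary:Number and lengths of paths legs can be robustly limited}), shadow it leg by leg for $g$ using a backwards-chosen chain of tolerances from Lemma~\ref{Lemma:Robust uniform continuity of foliations}, and finish with Lemma~\ref{Lemma:Robust transversal intersection} to land the final stable leg on $W^{cu}_{r}(q,g)$. The only cosmetic difference is your leg count $l_{1}+1$ versus the paper's reuse of the last leg, which is immaterial since any uniform bound suffices.
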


\begin{proof}
Let $l_{1}>0$ and $R_{1}>0$ be as in 
Corollary~\ref{Corollary:Number and lengths of paths legs can be robustly limited}. For the sake 
of simplicity, we will assume that $l_{1}=4$. Given $R_{1}$ and $r$ let $\epsilon$, $\delta_{0}$ and 
$\mathcal{V}(f)$ be as in Lemma~\ref{Lemma:Robust transversal intersection}. From 
Lemma~\ref{Lemma:Robust uniform continuity of foliations} there exist $\delta_{1}>0$ and 
$\mathcal{U}_{1}(f)\subset\mathcal{V}(f)$ such that if $g\in\mathcal{U}_{1}(f)$ and $x,y\in M$ 
with $d(x,y)<\delta_{1}$ then $\mathcal{F}^{\alpha}_{R_{1}}(x,g)$ and $\mathcal{F}^{\alpha}_{R_{1}}(y,f)$ 
are $\delta_{0}$-close, $\alpha =ss$ or $uu$. Once again, using 
Lemma~\ref{Lemma:Robust uniform continuity of foliations} take $\delta_{2}>0$ and 
$\mathcal{U}_{2}(f)\subset\mathcal{U}_{1}(f)$ such that if $g\in\mathcal{U}_{2}(f)$ and $x,y\in M$ 
with $d(x,y)<\delta_{2}$ then $\mathcal{F}^{\alpha}_{R_{1}}(x,g)$ and $\mathcal{F}^{\alpha}_{R_{1}}(y,f)$ 
are $\delta_{1}$-close, $\alpha =ss$ or $uu$. Finally let $\mathcal{U}(f)\subset\mathcal{U}_{2}(f)$ be a 
neighbourhood such that for any $g\in\mathcal{U}(f)$ and for any $x\in M$ we have that 
$\mathcal{F}^{\alpha}_{R_{1}}(x,g)$ and $\mathcal{F}^{\alpha}_{R_{1}}(x,f)$ are $\delta_{2}$-close, 
$\alpha =ss$ or $uu$.\\
Let us prove that $\mathcal{U}(f)$, $l=l_{1}$ and $R=R_{1}+\epsilon$ satisfy what we want. Let 
$g\in\mathcal{U}(f)$ and let $p,q\in M$. We know that there exists a $us$-path by $f$ that starts at
$p$, ends at $q$, and has at most $l_{1}$ legs, each of them with length at most $R_{1}$. Moreover, the 
last leg lies in $\mathcal{F}^{ss}_{R_{1}}(q,f)$. Suppose that such a $us$-path has exactly $l_{1}$ 
legs. Let $p=z_{0}$, $z_{1}$, $z_{2}$, $z_{3}$, $z_{4}=q$ be such a $us$-path.\\
We have that $\mathcal{F}^{uu}_{R_{1}}(p,g)$ and $\mathcal{F}^{uu}_{R_{1}}(p,f)$ are $\delta_{2}$-close.
Then, there exists $x_{1}\in\mathcal{F}^{uu}_{R_{1}}(p,g)$ such that $d(x_{1},z_{1})<\delta_{2}$. Thus 
$\mathcal{F}^{ss}_{R_{1}}(x_{1},g)$ and $\mathcal{F}^{ss}_{R_{1}}(z_{1},f)$ are $\delta_{1}$-close. Therefore, 
there exists $x_{2}\in\mathcal{F}^{ss}_{R_{1}}(x_{1},g)$ such that $d(x_{2},z_{2})<\delta_{1}$. Hence 
$\mathcal{F}^{uu}_{R_{1}}(x_{2},g)$ and $\mathcal{F}^{uu}_{R_{1}}(z_{2},f)$ are $\delta_{0}$-close. Take 
$x_{3}\in\mathcal{F}^{uu}_{R_{1}}(x_{2},g)$ with $d(x_{3},z_{3})<\delta_{0}$. From 
Lemma~\ref{Lemma:Robust transversal intersection}, since $q\in\mathcal{F}^{ss}_{R_{1}}(z_{3},f)$ we have that
\[
\mathcal{F}^{ss}_{R_{1}+\epsilon}(x_{3},g)\pitchfork W^{cu}_{r}(q,g)\neq \emptyset.
\]
Take 
\[
q'\in\mathcal{F}^{ss}_{R_{1}+\epsilon}(x_{3},g)\pitchfork W^{cu}_{r}(q,g),
\]
and $p,x_{1},x_{2},x_{3},q'$ the $us$-path by $g$. The case that such a $us$-path by $f$, connecting 
$p$ to $q$, has $l'$ legs with $l'<l_{1}$, is similar.

\end{proof}

\subsection{Property SH}
\label{subsec:Property SH} \quad
\newline

\noindent We define below the key property introduced in \cite{PS} which ensures the robustness of the minimal stable foliation.
Moreover, we will prove later that this property also ensures robust transitivity. Before we do, let 
us introduce some notation: if $L:V\rightarrow W$ is a linear isomorphism between normed vector 
spaces we denote by $m\lbrace L\rbrace$ the minimum norm of $L$, i.e. 
$m\lbrace L\rbrace=\parallel L^{-1}\parallel^{-1}$. 
\begin{Definition}\label{Definition:The SH property}
Let $f\in Diff^{r}(M)$ be a partially hyperbolic diffeomorphism. We say that $f$ exhibits 
the property SH  if there exist $\lambda_{0}>1, C>0$ such that for any $x\in M$ there 
exists $y^{u}(x)\in\mathcal{F}^{uu}_{1}(x,f)$ (the ball of radius $1$ in $\mathcal{F}^{uu}(x,f)$ 
centred at $x$) satisfying
\[
m\lbrace Df^{n}_{\mid E^{c}(f^{l}(y^{u}(x)))}\rbrace> C\lambda^{n}_{0}\hspace{0.5 cm}\text{for any}\hspace{0.5 cm} n>0,\hspace{0.5 cm} l>0.
\]
\end{Definition}
\noindent The Property SH persists under slight perturbations. 
\begin{Theorem}\label{Theorem:SH is robust}
Let $f\in Diff^{r}(M)$ be a partially hyperbolic diffeomorphism exhi\-
biting Property SH. Then, 
there exist a $C^{1}$ neighbourhood $\mathcal{U}$ of $f$, $C'>0$ and $\sigma>1$ such that for any $g\in\mathcal{U}$ it follows that 
for any $x\in M$ there exists $y^{u}\in\mathcal{F}^{uu}_{1}(x,g)$ satisfying 
\[
m\lbrace Dg^{n}_{\mid E^{c}(g^{l}(y^{u}))}\rbrace> C' \sigma^{n} \hspace{0.5 cm}\text{for any}\hspace{0.5 cm} n>0,\hspace{0.5 cm} l>0.
\]
\end{Theorem}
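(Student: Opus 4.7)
The plan is to deduce the robustness of Property SH from the joint continuity of the invariant splitting (Lemma~\ref{Lemma:Partial hyperbolicity is open}), the continuous dependence of the strong unstable foliation (Theorem~\ref{Theorem:Continous dependence of foliations}), and a $C^{1}$-uniform comparison of the minimum norm of $Dg$ on the center bundle.

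First I fix $\sigma$ with $1<\sigma<\lambda_{0}$ and pick $\eta>0$ small enough that $\lambda_{0}e^{-\eta}>\sigma$. By Lemma~\ref{Lemma:Partial hyperbolicity is open}, partial hyperbolicity holds with uniform constants on some $C^{1}$ neighbourhood $\mathcal{U}_{0}$ of $f$, and the splitting $E^{ss}(g)\oplus E^{c}(g)\oplus E^{uu}(g)$ depends continuously on $g\in\mathcal{U}_{0}$. Because $m\lbrace Df_{\mid E^{c}(f,z)}\rbrace\geq\mu>0$ is bounded away from zero, the map $(g,z)\mapsto\log m\lbrace Dg_{\mid E^{c}(g,z)}\rbrace$ is uniformly continuous on $\mathcal{U}_{0}\times M$, and shrinking $\mathcal{U}_{0}$ to a smaller neighbourhood $\mathcal{U}$ I can arrange
\[
\big|\log m\lbrace Dg_{\mid E^{c}(g,z)}\rbrace-\log m\lbrace Df_{\mid E^{c}(f,z)}\rbrace\big|<\eta
\]
for every $g\in\mathcal{U}$ and $z\in M$.

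For each $x\in M$, I define the candidate SH point $y^{u}=y^{u}(x,g)\in\mathcal{F}^{uu}_{1}(x,g)$ as the image of the SH point $y^{u}(x,f)$ of $f$ under the continuous correspondence between $\mathcal{F}^{uu}_{1}(x,f)$ and $\mathcal{F}^{uu}_{1}(x,g)$ furnished by Theorem~\ref{Theorem:Continous dependence of foliations}. Factorising the center Jacobian and applying the pointwise comparison from the previous step,
\[
\log m\lbrace Dg^{n}_{\mid E^{c}(g^{l}(y^{u}))}\rbrace \;=\; \sum_{j=0}^{n-1}\log m\lbrace Dg_{\mid E^{c}(g,g^{l+j}(y^{u}))}\rbrace \;\geq\; -n\eta+\sum_{j=0}^{n-1}\log m\lbrace Df_{\mid E^{c}(f,g^{l+j}(y^{u}))}\rbrace,
\]
so the task reduces to bounding the Birkhoff-like sum on the right from below by $\log C+n\log\lambda_{0}$.

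The main obstacle is that the forward $g$-orbit $\{g^{l+j}(y^{u})\}_{j}$ diverges exponentially from the forward $f$-orbit $\{f^{l+j}(y^{u}(x,f))\}_{j}$ precisely because of the center expansion that SH provides, so a naive orbit-by-orbit pointwise comparison fails beyond a few iterates. The resolution, following the scheme of \cite{PS}, is to exploit a more global structural consequence of SH for $f$: the set of points at which the forward $f$-orbit enjoys uniform center expansion meets every unit strong unstable disk, and by the continuous dependence of the foliation this structural feature transfers to $g\in\mathcal{U}$ with only a small loss in the exponent. Combining the resulting uniform lower bound for the Birkhoff sum with the display above, and setting $C'=Ce^{-\eta}$, yields $m\lbrace Dg^{n}_{\mid E^{c}(g^{l}(y^{u}))}\rbrace>C'\sigma^{n}$ for all $n,l>0$, as required.
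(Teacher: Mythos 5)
There is a genuine gap, and it sits exactly where you place the ``resolution''. Your first two steps (the uniform comparison of $\log m\lbrace Dg_{\mid E^{c}(g,z)}\rbrace$ with $\log m\lbrace Df_{\mid E^{c}(f,z)}\rbrace$ at the same point $z$, and the submultiplicative telescoping of $m\lbrace Dg^{n}\rbrace$) are fine, but they reduce the problem to bounding a sum of $f$-data evaluated along the \emph{$g$-orbit} of your candidate point, and Property SH for $f$ says nothing about such orbits: it only controls the $f$-orbits of the special points $y^{u}(x,f)$. You correctly identify this obstacle, but the paragraph meant to overcome it contains no argument. The ``global structural consequence of SH for $f$'' you invoke --- that the set of points with uniform forward center expansion meets every unit strong unstable disk --- is precisely the definition of Property SH, and the assertion that ``this structural feature transfers to $g$ with only a small loss in the exponent'' is precisely the statement of the theorem. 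No mechanism for the transfer is given, so the key step is circular. Note also that Theorem~\ref{Theorem:Continous dependence of foliations} gives continuity of the leaves as compact submanifolds but no canonical point-to-point correspondence; even a correspondence built from the embeddings $\varphi(x,g)$ of Lemma~\ref{Lemma:Inclusion of backward iteration} would only make $y^{u}(x,g)$ close to $y^{u}(x,f)$ at time zero, which controls finitely many iterates, not all $n,l>0$.

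What is actually needed (and what \cite{PS} supplies --- the paper itself only cites that reference rather than reproducing the argument) is a way to manufacture, inside each unit $g$-unstable disk, a point whose \emph{entire} forward $g$-orbit stays in the region of uniform center expansion. The standard route is to replace SH by a finite-time statement that is manifestly $C^{1}$-open --- for instance: for some large $N$ and $\sigma_{2}\in(\sigma,\lambda_{0})$, every disk $\mathcal{F}^{uu}_{1}(x,g)$ contains a point $y$ with $m\lbrace Dg^{N}_{\mid E^{c}(y)}\rbrace>\sigma_{2}^{N}$ and with $g^{N}(\mathcal{F}^{uu}_{1}(x,g))\supset\mathcal{F}^{uu}_{1}(g^{N}(y),g)$ --- and then to iterate it, producing a nested sequence of sub-disks whose intersection is the desired point $y^{u}$, the drop from $\sigma_{2}$ to $\sigma$ and from $C$ to $C'$ absorbing the finitely many uncontrolled iterates inside each block of length $N$. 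Without some such concatenation (or an alternative hyperbolic-continuation argument for the compact forward-invariant set on which the expansion holds), the proposal does not prove the theorem.
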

\begin{proof}
See \cite{PS}.
\end{proof}

Before stating the Theorem which guarantees the robustness of the minimality
of a strong stable foliation for a partially hyperbolic diffeomorphism, we recall the concept of minimal stable foliation.

\begin{Definition}\label{Definition:Minimal and robustly minimal stable foliation }
Let $f:M\rightarrow M$ be a $C^{r}$ partially hyperbolic diffeomorphism.
We say that $\mathcal{F}^{ss}(f)$ is minimal  when $\mathcal{F}^{ss}(x,f)$,
the leave of this foliation passing through the point $x$, is dense in $M$
for every $x\in M$.  
We say that $\mathcal{F}^{ss}(f)$ is $C^{r}$-robustly minimal
if there exist a $C^{r}$ neighbourhood $\mathcal{U}(f)$ such that
$\mathcal{F}^{ss}(g)$ is minimal for every diffeomorphism $g\in\mathcal{U}(f)$.
\end{Definition}

\begin{Theorem}\label{Theorem:Stable foliation is robustly minimal}
Let $r\geq 1$ and let $f\in Diff^{r}(M)$ be a partially hyperbolic diffeomorphism
satisfying Property SH and such that the strong stable foliation 
$\mathcal{F}^{ss}(f)$ is minimal. Then, $\mathcal{F}^{ss}(f)$ is $C^{1}$
(and hence $C^{r}$) robustly minimal. 
\end{Theorem}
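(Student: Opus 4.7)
The plan is to transfer the minimality of $\mathcal{F}^{ss}(f)$ to every nearby $g$ by combining three ingredients: the robustness of Property SH (Theorem~\ref{Theorem:SH is robust}), the continuous dependence of the invariant foliations on $g$ (Theorem~\ref{Theorem:Continous dependence of foliations} and Lemma~\ref{Lemma:Robust uniform continuity of foliations}), and the backward contraction of center-unstable discs along orbits where the center direction is uniformly expanding (Lemma~\ref{Lemma:Backward iterations contract} together with Lemma~\ref{Lemma:Inclusion of backward iteration}).

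First I would pick a $C^{1}$ neighborhood $\mathcal{U}_{0}$ of $f$ on which Property SH holds with uniform constants $C'>0$ and $\sigma>1$, the foliations $\mathcal{F}^{ss}(g)$, $\mathcal{F}^{uu}(g)$ and the local center-unstable manifolds $W^{cu}(\cdot,g)$ depend continuously on $g$, and the dominated splitting constants of Lemma~\ref{Lemma:Partial hyperbolicity is open} (in particular a uniform $\lambda_{1}\in(0,1)$) are valid. I would also fix a small radius $r_{0}>0$ at which the local product structure holds uniformly in $g\in\mathcal{U}_{0}$: whenever $d(p,q)<r_{0}/2$, the local strong stable disc through $p$ meets $W^{cu}_{r_{0}}(q,g)$ at exactly one point. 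With these choices made, I would establish a uniform density statement at scale $r_{0}$: by minimality of $\mathcal{F}^{ss}(f)$ and compactness of $M$ there exists $T>0$ such that $\mathcal{F}^{ss}_{T}(x,f)$ is $r_{0}/4$-dense in $M$ for every $x\in M$, and applying Lemma~\ref{Lemma:Robust uniform continuity of foliations} with $R_{0}=T$ (shrinking $\mathcal{U}_{0}$ if needed) gives that $\mathcal{F}^{ss}_{T}(x,g)$ is $r_{0}/2$-dense in $M$ for every $x\in M$ and every $g\in\mathcal{U}_{0}$. Together with the local product structure this yields the following key claim, proved without any use of Property SH:
\[
\mathcal{F}^{ss}(x,g)\cap W^{cu}_{r_{0}}(q,g)\neq\emptyset\qquad\text{for every }x,q\in M,\ g\in\mathcal{U}_{0}.
\]

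To upgrade this key claim to density at arbitrarily small scale, fix $g\in\mathcal{U}_{0}$, $x,y\in M$ and $\epsilon>0$. I would choose $n$ so large that $\lambda_{1}^{n}<\epsilon/3$ and use Property SH for $g$ to find $y^{u}_{n}\in\mathcal{F}^{uu}_{1}(g^{n}(y),g)$ whose forward orbit enjoys $C'\sigma^{k}$ expansion in $E^{c}$; the point $\hat{y}:=g^{-n}(y^{u}_{n})$ then lies on $\mathcal{F}^{uu}(y,g)$ at distance $<\epsilon/3$ from $y$. Iterating Lemma~\ref{Lemma:Inclusion of backward iteration} $n$ times produces a threshold $r'_{n}>0$ such that any cu-disc of radius $<r'_{n}$ at $y^{u}_{n}$ pulls back by $g^{-n}$ into a cu-disc of radius $<\epsilon/3$ at $\hat{y}$. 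Next I would pick $k$ large enough that $\lambda_{1}^{k}r_{0}<r'_{n}$; Lemma~\ref{Lemma:Backward iterations contract}, whose hypothesis is verified on the forward orbit of $y^{u}_{n}$ thanks to SH, gives
\[
g^{-k}\bigl(W^{cu}_{r_{0}}(g^{k}(y^{u}_{n}),g)\bigr)\subset W^{cu}_{\lambda_{1}^{k}r_{0}}(y^{u}_{n},g),
\]
and therefore $g^{-(n+k)}\bigl(W^{cu}_{r_{0}}(g^{k}(y^{u}_{n}),g)\bigr)\subset W^{cu}_{\epsilon/3}(\hat{y},g)$. Applying the key claim to the source $g^{n+k}(x)$ and the target disc $W^{cu}_{r_{0}}(g^{k}(y^{u}_{n}),g)$ produces a point $p\in\mathcal{F}^{ss}(g^{n+k}(x),g)\cap W^{cu}_{r_{0}}(g^{k}(y^{u}_{n}),g)$, so that $g^{-(n+k)}(p)\in\mathcal{F}^{ss}(x,g)$ sits within $\epsilon/3+\epsilon/3<\epsilon$ of $y$. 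Since $x,y$ and $\epsilon$ are arbitrary, $\mathcal{F}^{ss}(x,g)$ is dense for every $x\in M$, and hence $\mathcal{F}^{ss}(g)$ is minimal for every $g\in\mathcal{U}_{0}$.

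The hard part will be aligning the constants in the correct order. The radius $r_{0}$ must be small enough for local product structure but compatible with the $r_{0}/2$-density step; $n$ must be fixed first, so that $\hat{y}$ is $\epsilon$-close to $y$; and only then can $k$ be chosen large enough to contract from radius $r_{0}$ past the possibly tiny threshold $r'_{n}$ dictated by the $n$ \emph{pre-SH} backward steps from $y^{u}_{n}$ to $\hat{y}$. These first $n$ steps lie outside the regime where SH provides expansion (SH only controls the forward orbit of $y^{u}_{n}$ from $g(y^{u}_{n})$ onward), so they must be absorbed by the merely bounded, non-contracting estimate of Lemma~\ref{Lemma:Inclusion of backward iteration}; keeping this bookkeeping clean is the main technical nuisance of the argument.
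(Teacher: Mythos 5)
The paper does not prove this theorem itself but simply cites \cite{PS}; your reconstruction is essentially the Pujals--Sambarino argument (and the same contraction mechanism the paper reuses in its proof of Theorem~\ref{Theorem:Robustness of density of accessibility classes}): a uniform-scale intersection property inherited from minimality of $\mathcal{F}^{ss}(f)$ via continuity of the foliations, upgraded to arbitrarily small scales by pulling back fixed-radius $cu$-discs along the SH orbits using Lemma~\ref{Lemma:Backward iterations contract}. The argument is correct; the only cosmetic point is the boundary case $l=0$ when feeding the SH estimate into the hypothesis of Lemma~\ref{Lemma:Backward iterations contract} (Property SH is stated for $l>0$), an off-by-one the paper itself glosses over in exactly the same way.
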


\begin{proof}
See \cite{PS}.
\end{proof}

\subsection{Blenders and Heterodimensional Cycles}
\label{subsec:Blenders and Heterodimensional Cycles} \quad
\newline

\noindent In this subsection we recall the notions of blender and
heterodimensional cycle and the relation between them. We
also give a condition under which the presence of a blender
guarantees the Property SH.

\begin{Definition}\label{Definition:Blender}
A cs\emph{-blender} for $f\in Diff^{r}(M)$ with $r\geq 1$  is a hyperbolic set $K$ with a partially hyperbolic structure
$E^{ss}\bigoplus E^{c}\bigoplus E^{uu}$ such that $E^{c}\bigoplus E^{uu}$ is the unstable bundle, and with a periodic
point $p$ such that for any disc $D$ that is $C^{1}$-close to $\mathcal{F}_{loc}^{uu}(p)$, there exists $x$ in the
hyperbolic set $K$ such that  $\mathcal{F}^{ss}(x)$ intersects $D$. Moreover, such a property is $C^{1}$-persistent.\\
A cs\emph{-blender} for $f^{-1}$ is called cu\emph{-blender} for $f$. 
\end{Definition}

\begin{Proposition}\label{Proposition:Enrique}
Let $f\in Diff^{r}(M)$ be a partially hyperbolic diffeomorphism with strong
unstable minimal foliation such that $K$ is a $cs$-blender for $f$ with a periodic point $p$. Then
$f$ satisfies Property SH.\\
Analogously if $f$ has a strong stable minimal foliation and it has
a $cu$-blender then $f^{-1}$ satisfies Property SH.
\end{Proposition}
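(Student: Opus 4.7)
The plan is to use the minimality of $\mathcal{F}^{uu}(f)$ to arrange, for every $x\in M$, that a point $y^{u}(x)$ on the unstable leaf of $x$ has a forward iterate landing on the strong stable manifold of some point of the hyperbolic set $K$, where the centre bundle is uniformly expanded. The role of the $cs$-blender is precisely to convert ``being $C^{1}$-close to $\mathcal{F}^{uu}_{loc}(p)$'' into ``meeting $\mathcal{F}^{ss}(x_{0},f)$ for some $x_{0}\in K$''.

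First I would combine the minimality of $\mathcal{F}^{uu}$ with compactness of $M$ to deduce \emph{uniform minimality}, and use the exponential expansion of strong unstable distances by $f$ to fix a uniform integer $N>0$ with the following property: for every $x\in M$ the iterate $f^{N}(\mathcal{F}^{uu}_{1}(x,f))$ contains a sub-disc $D_{x}$ that is $C^{1}$-close to $\mathcal{F}^{uu}_{loc}(p)$. By the defining property of the $cs$-blender, $D_{x}$ meets $\mathcal{F}^{ss}(x_{0},f)$ for some $x_{0}=x_{0}(x)\in K$. Pulling such an intersection point back by $f^{-N}$ gives $y^{u}(x)\in\mathcal{F}^{uu}_{1}(x,f)$ with $f^{N}(y^{u}(x))\in\mathcal{F}^{ss}(x_{0},f)$.

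The forward orbit $\{f^{l}(y^{u}(x))\}_{l\geq N}$ therefore converges exponentially to the orbit of $x_{0}\in K$ and stays, for all $l\geq N$, inside a small neighbourhood $V$ of $K$ on which the hyperbolic structure of $K$ extends with uniform constants; since $E^{c}\oplus E^{uu}$ is the unstable bundle of $K$, the dominated splitting inside it forces $m\{Df^{n}|_{E^{c}(z)}\}\geq C_{1}\sigma^{n}$ for every $z\in V$ and $n>0$, with uniform $C_{1}>0$ and $\sigma>1$. For the remaining finitely many indices $0<l<N$ I would use the cocycle identity $Df^{n}|_{E^{c}(f^{l}(y^{u}(x)))}=Df^{n-(N-l)}|_{E^{c}(f^{N}(y^{u}(x)))}\circ Df^{N-l}|_{E^{c}(f^{l}(y^{u}(x)))}$ when $n\geq N-l$, bounding the first factor via the previous step and the second by $c^{N}$, where $c:=\inf_{z\in M}m\{Df|_{E^{c}(z)}\}>0$ thanks to compactness of $M$ and invertibility of $Df|_{E^{c}}$; when $n<N-l$ the time $n$ is bounded by $N$ and the trivial bound $c^{n}\geq C'\sigma^{n}$ holds for a sufficiently small uniform $C'$. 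Collecting the constants gives Property SH.

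The analogous statement for $f^{-1}$ is then immediate: a $cu$-blender for $f$ is by definition a $cs$-blender for $f^{-1}$, and the hypothesis that $\mathcal{F}^{ss}(f)=\mathcal{F}^{uu}(f^{-1})$ is minimal lets us apply the argument above to $f^{-1}$. The main obstacle I expect is the bookkeeping needed to make every constant uniform in $x$; the two inputs that make this possible are the uniform minimality of $\mathcal{F}^{uu}$ on the compact manifold and the $C^{1}$-robust nature of the blender, which together allow a single $N$ and a single family of constants to work for all $x\in M$ simultaneously.
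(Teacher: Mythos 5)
Your proposal is correct and follows essentially the same route as the paper: use (uniform) minimality of $\mathcal{F}^{uu}$ to place a disc $C^{1}$-close to $\mathcal{F}^{uu}_{loc}(p)$ inside a uniformly bounded piece of every strong unstable leaf, invoke the blender to intersect it with $\mathcal{F}^{ss}(x_{0},f)$ for some $x_{0}\in K$, and exploit forward convergence to the orbit of $x_{0}$, along which $E^{c}$ is uniformly expanded because $E^{c}\oplus E^{uu}$ is the unstable bundle of $K$. Your version is in fact more careful than the paper's about fitting the disc inside $f^{N}(\mathcal{F}^{uu}_{1}(x,f))$ and about the finitely many initial iterates, details the paper glosses over.
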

\begin{proof}
It is not difficult to see that if the strong unstable foliation is minimal,
then there exists $r > 0$ such that 
$D_{z}\subset\mathcal{F}_{r}^{uu}(z,f),\forall z\in M$,
where $D_{z}$ is a disc $C^{1}$-close to $\mathcal{F}_{loc}^{uu}(p,f)$.
Hence, using the Definition~\ref{Definition:Blender} above, we have that for some $l > 0$
and for every $z\in M$, there exists $y^{z}$ such that:
\begin{equation*}
y^{z}\in\mathcal{F}^{uu}_{r}(z,f)\cap \mathcal{F}^{ss}_{l}(x,f)\quad \text{for some}\quad x\in K. 
\end{equation*}
From this, it follows that
\begin{equation*}
d(f^{n}(y^{z}), f^{n}(x))\rightarrow 0\quad\text{when}\quad n\rightarrow +\infty. 
\end{equation*}
Since $T_{x}M=E^{ss}(x,f)\oplus E^{u}(x,f)\oplus E^{uu}(x,f)$ for
every $x\in K$, 
$Df_{\mid E^{c}(x)}$ is uniformly expanding in the future.
Therefore, $f$ satisfies Property SH.
\end{proof}

\noindent Blenders can be produced by unfolding heterodimensional cycles 
far from homoclinic tangencies as in next  Proposition found in \cite{BDV}. 

\begin{Definition}\label{Definition:Heterodimensional cycles}
Given a diffeomorphism $f$ with two hyperbolic periodic points
$P_{f}$ and $Q_{f}$ with different indices, say 
$\text{index}(P_{f})>\text{index}(Q_{f})$, we say that $f$ has a
heterodimensional cycle with codimension $\text{index}(P_{f})-\text{index}(Q_{f})$
associated to $P_{f}$ and $Q_{f}$ if $\mathcal{F}^{s}(P_{f},f)$ and
$\mathcal{F}^{u}(Q_{f},f)$ have a (non-trivial) transverse intersection
and $\mathcal{F}^{u}(P_{f},f)$ and
$\mathcal{F}^{s}(Q_{f},f)$ have a quasi-transverse intersection along
the orbit of some point $x$, i.e., $T_{x}\mathcal{F}^{u}(P_{f},f)+T_{x}\mathcal{F}^{s}(Q_{f},f)$
is a direct sum.
\end{Definition}

\begin{Proposition}\label{Proposition:Bonatti, Diaz, Viana}
Let $f$ be a $C^{1}$ diffeomorphism with a heterodimensional cycle
associated to saddles $P$ and $Q$ of indices $p$ and $q=p+1$.
Suppose that the cycle is $C^{1}$-far from homoclinic tangencies.
Then there is an open set $\mathcal{V}\subset Diff^{1}(M)$ whose
closure contains $f$ such that for every $g$ in  $\mathcal{V}$
there are a $cs$-blender defined for $g$ and a $cs$-blender
defined for $g^{-1}$ such that:

$\cdotp$ The $cs$-blender for $g$ is associated to a hyperbolic periodic point $R_{g}$.

$\cdotp$ The $cs$-blender for $g^{-1}$ is associated to a hyperbolic periodic point $S_{g}$.
 
\end{Proposition}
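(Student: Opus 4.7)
The plan is to follow the construction of Bonatti, D\'iaz and Viana in \cite{BDV}. The heterodimensional cycle supplies two saddles with a one-dimensional index gap, a transverse intersection $\mathcal{F}^{s}(P,f)\pitchfork\mathcal{F}^{u}(Q,f)$ and a quasi-transverse heteroclinic orbit realising the contact of $\mathcal{F}^{u}(P,f)$ with $\mathcal{F}^{s}(Q,f)$. The goal is to perturb $f$ so that, in a $C^{1}$-robust way, the maximal invariant set in a neighbourhood of the cycle contains both of the required blenders.

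First I would fix a small isolating neighbourhood $U$ of the closure of the cycle and exploit the far-from-tangencies hypothesis to guarantee, through the Wen/Gourmelon type results invoked in \cite{BDV}, that every diffeomorphism $C^{1}$-close to $f$ admits a dominated splitting over the relevant maximal invariant set in $U$, with a one-dimensional central direction precisely accounting for the index mismatch between $P$ and $Q$. This splitting produces the bundle $E^{c}$ of Definition \ref{Definition:Blender} and is the object that persists under perturbation.

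The second step is to perform an arbitrarily small $C^{1}$ perturbation supported in a neighbourhood of the quasi-transverse heteroclinic orbit. In linearising coordinates near $P$ and $Q$, the first-return map of the perturbed diffeomorphism to a small product box transverse to the cycle takes the form of an iterated function system of affine-like maps acting on a strip of the appropriate codimension. By tuning the perturbation one arranges the classical \emph{covering property}: the images of the IFS branches overlap on a ``superposition region'' so that every $C^{1}$ disc tangent to the unstable cone and correctly positioned in the box meets the stable manifold of some orbit in the IFS-invariant hyperbolic set $K$. This covering property is $C^{1}$-open by a standard cone-field argument, which gives the open set $\mathcal{V}$ accumulating on $f$ together with the periodic point $R_{g}\in K$ required in Definition \ref{Definition:Blender}. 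Applying the analogous construction to $g^{-1}$, whose cycle has the roles of $P$ and $Q$ reversed, produces the second blender with periodic point $S_{g}$.

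The main obstacle is to run both constructions inside a \emph{single} open set $\mathcal{V}$ whose closure contains $f$. This is where the hypothesis of being far from homoclinic tangencies plays its essential role: not only does it provide the dominated splitting above, but it also allows the linearising coordinates near $P$ and $Q$ to be chosen so that one and the same small perturbation simultaneously opens a covering region in forward time (giving the $cs$-blender for $g$) and in backward time (giving the $cs$-blender for $g^{-1}$). Verifying the $C^{1}$-persistence of the resulting blender property, and the fact that the associated periodic points $R_{g}$ and $S_{g}$ depend continuously on $g$, is then a routine cone-field bookkeeping exercise which I would cite directly from \cite{BDV} rather than reprove here.
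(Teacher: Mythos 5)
The paper offers no proof of this Proposition at all --- it is stated as a result imported directly from \cite{BDV} --- so the only thing to compare your argument against is that citation. Your sketch is a faithful outline of the standard Bonatti--D\'iaz--Viana construction (dominated splitting with one-dimensional centre coming from the far-from-tangencies hypothesis, a covering/superposition property for an affine-like iterated function system near the quasi-transverse orbit, and the simultaneous production of the blender for $g^{-1}$ inside one open set $\mathcal{V}$), and since you too ultimately defer the technical verification to \cite{BDV}, your approach coincides with the paper's.
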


\section{Property SH and Topologically Mixing}
\label{sec:Property SH and Topologically Mixing}

\noindent Our first goal is to show that some diffeomorphisms with Property SH are
topolo\-gically mixing. In order to do this
we will need a few preliminary results.

\begin{Lemma}\label{Lemma:From the Stable Manifold Theorem}
Let $\epsilon >0$ be given by the Stable Manifold Theorem and $r>0$ sufficiently small.
For any $\epsilon '<\epsilon, r'<r$ there exists $d'=d'(\epsilon ',r')>0$ such that
for any pair of points $x,y\in M$ with $dist(x,y)<d'$ the manifolds $W^{cu}_{r'}(x,f)$
and $\mathcal{F}^{ss}_{\epsilon'}(y,f)$ intersect transversally in
exactly one point. 
\end{Lemma}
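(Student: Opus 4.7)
The plan is to establish this as a local product structure result, exploiting the transversality of the bundles $E^{ss}$ and $E^{cu}$ and the continuous dependence given by Theorem~\ref{Theorem:Continous dependence of foliations} and Lemma~\ref{Lemma:Inclusion of backward iteration}. I would proceed in three steps: (i) set things up in charts so that both manifolds are graphs, (ii) use transversality at the base point to obtain a unique intersection via the implicit function theorem, and (iii) quantify the argument uniformly in $x$ so that the intersection lies inside the prescribed balls as soon as $d(x,y)<d'$ for an appropriate $d'$.

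First, I would recall that $T_{x}M = E^{ss}(x,f) \oplus E^{cu}(x,f)$ depends continuously on $x$ and that the angle between $E^{ss}$ and $E^{cu}$ is uniformly bounded below on $M$ by compactness. For each $x\in M$, choose a chart $\psi_{x}\colon U_{x}\to \R^{\dim M}$ centred at $x$ in which $E^{ss}(x,f)$ is sent to a horizontal subspace and $E^{cu}(x,f)$ to a complementary vertical subspace. In this chart, since $T_{x}W^{cu}_{r'}(x,f)=E^{cu}(x,f)$ and $T_{x}\mathcal{F}^{ss}_{\epsilon'}(x,f)=E^{ss}(x,f)$ (see the remarks after Lemma~\ref{Lemma:Partial hyperbolicity is open} and Lemma~\ref{Lemma:Inclusion of backward iteration}), the manifold $W^{cu}_{r'}(x,f)$ is locally the graph of a $C^{1}$ map $\varphi^{cu}_{x}$ from a neighbourhood of the origin in $E^{cu}(x,f)$ to $E^{ss}(x,f)$ with $\varphi^{cu}_{x}(0)=0$ and $D\varphi^{cu}_{x}(0)=0$; similarly, for $y$ sufficiently close to $x$, $\mathcal{F}^{ss}_{\epsilon'}(y,f)$ is the graph of a map $\varphi^{ss}_{y}$ from a neighbourhood of the origin in $E^{ss}(x,f)$ to $E^{cu}(x,f)$, with $\varphi^{ss}_{y}$ and $D\varphi^{ss}_{y}$ small when $y$ is close to $x$ (this uses the continuous dependence of $\mathcal{F}^{ss}$ on base points in Theorem~\ref{Theorem:Continous dependence of foliations}).

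Next, finding an intersection point reduces to solving $\bigl(\varphi^{cu}_{x}(u),u\bigr)=\bigl(v,\varphi^{ss}_{y}(v)\bigr)$, i.e.\ the fixed-point equation $u=\varphi^{ss}_{y}\!\circ\varphi^{cu}_{x}(u)$, $v=\varphi^{cu}_{x}(u)$. Since $D\varphi^{cu}_{x}(0)=0$ and $D\varphi^{ss}_{y}$ is bounded (close to $0$ in operator norm, uniformly for $y$ close to $x$), the map $u\mapsto \varphi^{ss}_{y}\!\circ\varphi^{cu}_{x}(u)$ is a uniform contraction on a small ball in $E^{cu}(x,f)$, giving existence and uniqueness of a fixed point. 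Transversality at this intersection point follows because both tangent spaces are $C^{0}$-close to $E^{cu}(x,f)$ and $E^{ss}(x,f)$ respectively, and the latter pair has angle bounded away from $0$.

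Finally, I would take the distance $d'$ small enough to ensure two things simultaneously: that $\psi_{x}(y)$ lies in the domain where the graph representation of $\mathcal{F}^{ss}_{\epsilon'}(y,f)$ is valid, and that the unique solution $(u,v)$ above satisfies $\|u\|<r'$ and $\|v\|+\|\varphi^{ss}_{y}(v)\|<\epsilon'$, so that the intersection point genuinely belongs to $W^{cu}_{r'}(x,f)\cap\mathcal{F}^{ss}_{\epsilon'}(y,f)$. By compactness of $M$, a finite subcover by such charts allows me to take a uniform $d'=d'(\epsilon',r')>0$ independent of $x,y$. The main obstacle I foresee is step (iii): producing quantitative bounds on the intersection point (inside the prescribed radii $\epsilon',r'$) uniformly in $x$, which forces one to control the $C^{1}$ size of the graphs $\varphi^{cu}_{x},\varphi^{ss}_{y}$ in terms of $r'$, $\epsilon'$, and $d(x,y)$; everything else is a standard consequence of continuity of the invariant bundles and the implicit function theorem.
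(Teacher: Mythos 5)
Your argument is correct: the paper in fact states this lemma with no proof at all, implicitly treating it as the standard local product structure consequence of the Stable Manifold Theorem, and your chart/graph representation with the contraction fixed-point equation $u=\varphi^{ss}_{y}\circ\varphi^{cu}_{x}(u)$ is exactly the standard argument being invoked (the hypothesis ``$r>0$ sufficiently small'' is what guarantees the contraction estimate holds on all of $W^{cu}_{r'}(x,f)$, giving uniqueness over the whole disc and not just near the base point). Nothing further is needed.
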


\begin{Lemma}\label{Lemma:Encapsulating cylinder}
Given $L>0$ and $r_{0}>0$ there exist $d,r_{1}$ and $\epsilon_{1}$ with $0< d< r_{1}<r_{0}$, $\epsilon_{1}>0$ and
such that for every $x\in M, z\in W^{cu}_{d}(x,f)$ if
\begin{equation*}
A_{x,z}=W^{cu}_{r_{1}}(x,f)\cup(\bigcup_{y\in W^{cu}_{d}(z,f)}\mathcal{F}^{ss}_{\epsilon_{1}}(y,f))
 \end{equation*}
then $diam (A_{x,z})< L$ and for any $y\in W^{cu}_{d}(z,f)$ holds that
$\mathcal{F}^{ss}_{\epsilon_{1}}(y,f)$ intersect $W^{cu}_{r_{1}}(x,f)$
transversally at exactly one point.
\end{Lemma}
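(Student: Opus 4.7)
The plan is to deduce both requirements from Lemma~\ref{Lemma:From the Stable Manifold Theorem} by choosing the constants in the order $r_{1}$, then $\epsilon_{1}$, then $d$. The transverse intersection requirement will force $d$ to be small compared to the constant $d'$ produced by that lemma, and the diameter bound will only need $r_{1}+\epsilon_{1}+2d$ to be dominated by $L/2$; since the first condition does not constrain $r_{1}$ or $\epsilon_{1}$ themselves, the two can be arranged simultaneously without any circularity.

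First I would establish the diameter estimate. Any $p\in A_{x,z}$ is either in $W^{cu}_{r_{1}}(x,f)$, giving $\text{dist}(p,x)\le r_{1}$, or lies on $\mathcal{F}^{ss}_{\epsilon_{1}}(y,f)$ for some $y\in W^{cu}_{d}(z,f)$. In the latter case, since arc length in a stable leaf and in a $W^{cu}$-submanifold dominates the ambient Riemannian distance, the triangle inequality gives
\[
\text{dist}(p,x)\le \text{dist}(p,y)+\text{dist}(y,z)+\text{dist}(z,x)\le \epsilon_{1}+d+d.
\]
Hence $\text{diam}(A_{x,z})\le 2\max(r_{1},\,\epsilon_{1}+2d)\le 2(r_{1}+\epsilon_{1}+2d)$, so the diameter bound will follow as soon as the constants satisfy $r_{1}+\epsilon_{1}+2d<L/2$.

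Second I would address the transverse intersection. Let $\epsilon>0$ and $r>0$ be as in Lemma~\ref{Lemma:From the Stable Manifold Theorem}. Choose $r_{1}$ with $r_{1}<\min(r_{0},r,L/4)$ and $\epsilon_{1}$ with $\epsilon_{1}<\min(\epsilon,L/8)$. Apply Lemma~\ref{Lemma:From the Stable Manifold Theorem} with $r'=r_{1}$ and $\epsilon'=\epsilon_{1}$ to obtain $d'=d'(\epsilon_{1},r_{1})>0$, and finally pick $d$ with $d<\min(r_{1},\,d'/3,\,L/16)$. For any $x\in M$, any $z\in W^{cu}_{d}(x,f)$, and any $y\in W^{cu}_{d}(z,f)$, the same domination of ambient distance by arc length along $W^{cu}$ gives $\text{dist}(x,y)\le 2d<d'$, so Lemma~\ref{Lemma:From the Stable Manifold Theorem} produces the desired unique transverse intersection of $W^{cu}_{r_{1}}(x,f)$ with $\mathcal{F}^{ss}_{\epsilon_{1}}(y,f)$.

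The proof therefore reduces to bookkeeping of constants, and there is no genuine obstacle. The only point requiring attention is that the transversality step yields a quantity $d'$ depending on $(r_{1},\epsilon_{1})$, so $d$ must be selected \emph{after} $r_{1}$ and $\epsilon_{1}$ have been fixed; once that order is respected, the diameter inequality $r_{1}+\epsilon_{1}+2d<L/2$ is automatically satisfied by the choices above.
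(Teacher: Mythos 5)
Your proof is correct and follows essentially the same route as the paper's: fix $r_{1}$ and $\epsilon_{1}$ small relative to $L$ and to the constants $r,\epsilon$ of Lemma~\ref{Lemma:From the Stable Manifold Theorem}, then take $d$ small relative to the resulting $d'$ so that $\mathrm{dist}(x,y)\le 2d<d'$ gives the unique transverse intersection, with the diameter bound coming from the same triangle-inequality estimate. The only differences are immaterial choices of numerical constants.
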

\begin{proof}
Let $\epsilon >0$, $r>0$ be as in Lemma~\ref{Lemma:From the Stable Manifold Theorem}.
Take $r_{1}< min\{\frac{L}{8}, r, r_{0}\}$ and $\epsilon_{1}< min \{\frac{L}{8},\epsilon\}$. From 
Lemma~\ref{Lemma:From the Stable Manifold Theorem}, there exist $d_{1}$ such that if
$dist (x,y)< d_{1}$ then the manifolds $W^{cu}_{r_{1}}(x,f)$ and $\mathcal{F}^{ss}_{\epsilon_{1}}(y,f)$
intersect transversally at exactly one point.

\noindent Choose $d< min\{\frac{d_{1}}{4}, r_{1}\}$.
Now let $x\in M$ be arbitrary  and $z\in W^{cu}_{d}(x,f)$.
Observe that if $y\in W^{cu}_{d}(z,f)$ then

\begin{equation*}
dist(x,y)\leq dist(x,z)+dist(z,y)\leq dist_{W_{d}^{cu}(x,f)}(x,z)+dist_{W_{d}^{cu}(z,f)}(z,y)\leq 2d< d_{1},
\end{equation*}
so the manifolds $W^{cu}_{r_{1}}(x,f)$ and $\mathcal{F}^{ss}_{\epsilon_{1}}(y,f)$
intersect transversally at exactly one point.\\
\noindent Also, for some $y\in W^{cu}_{d}(z,f)$, if 
$t\in\mathcal{F}^{ss}_{\epsilon_{1}}(y,f)$  then
\begin{equation*}
dist(t,x)\leq dist(t,y)+dist (y,x)\leq dist_{\mathcal{F}_{\epsilon_{1}}^{ss}(y,f)}(t,y)+dist(y,x)\leq \epsilon_{1}+2d<\frac{L}{2} 
\end{equation*}
and if $\overline{t}\in W^{cu}_{r_{1}}(x,f)$ then 

\begin{equation*}
dist(\overline{t},x)\leq dist_{W_{r_{1}}^{cu}(x,f)}(\overline{t},x)\leq r_{1}<\frac{L}{8}.
\end{equation*} 

\noindent Hence $diam(A)< L$.
\end{proof}

\begin{Proposition}\label{Proposition:Propiedad 1}
If $f:M\rightarrow M$ is a partially hyperbolic diffeomorphism and satisfies Property SH, 
then for any center-unstable disc $D$, there exists a periodic hyperbolic point $p$ of stable dimension $dim(E^{ss})$, whose stable manifold meets $D$ transversally.
\end{Proposition}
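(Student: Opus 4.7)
The plan has three stages: first, enlarge $D$ by forward iteration so it contains a full unit uu-ball; second, use Property SH and recurrence to produce a hyperbolic periodic point $p$ with stable dimension $\dim E^{ss}$ near this uu-ball; third, read off a transverse intersection of $W^{s}(p)=\mathcal{F}^{ss}(p,f)$ with the enlarged disc from Lemma~\ref{Lemma:From the Stable Manifold Theorem} and pull back by the appropriate iterate to land in the original $D$.

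For the enlargement, fix $x\in D$. Since $E^{uu}\subset E^{cu}=T_{x}D$ and $E^{uu}$ is uniquely integrable, there is an open uu-piece $\gamma$ through $x$ contained in $\mathcal{F}^{uu}(x,f)\cap D$. Because $\parallel Df^{-1}_{\mid E^{uu}}\parallel<\lambda$, the uu-radius of the leaf-piece $f^{n}(\gamma)\subset\mathcal{F}^{uu}(f^{n}(x),f)$ grows at least like $\lambda^{-n}$, so for some $n_{0}$ one has $\mathcal{F}^{uu}_{1}(f^{n_{0}}(x),f)\subset f^{n_{0}}(D)$. Property SH applied at $f^{n_{0}}(x)$ then furnishes a point $y\in\mathcal{F}^{uu}_{1}(f^{n_{0}}(x),f)\subset f^{n_{0}}(D)$ with $m\lbrace Df^{m}_{\mid E^{c}(f^{l}(y))}\rbrace>C\lambda_{0}^{m}$ for every $m,l>0$.

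For the periodic point, let $z$ be an accumulation point of $\{f^{l}(y)\}_{l\geq 0}$ and pick $l_{1}<l_{2}$ so that $q:=f^{l_{1}}(y)$ and $T(q):=f^{l_{2}-l_{1}}(q)=f^{l_{2}}(y)$ both lie within an arbitrarily small prescribed distance of $z$. Apply Lemma~\ref{Lemma:Encapsulating cylinder} with $x_{0}:=q$, $z_{0}:=T(q)$ to obtain a cylinder $A_{q,T(q)}$ consisting of a central cu-disc $W^{cu}_{r_{1}}(q,f)$ together with ss-fibres of length $\epsilon_{1}$. For $l_{2}-l_{1}$ chosen large, $T=f^{l_{2}-l_{1}}$ contracts $E^{ss}$ by $\lambda^{l_{2}-l_{1}}$ and simultaneously expands the cu-direction along the reference orbit: the unstable component by $\lambda^{-(l_{2}-l_{1})}$ from partial hyperbolicity and, crucially, the center component by at least $C\lambda_{0}^{l_{2}-l_{1}}$ from Property SH. A graph-transform / horseshoe argument on this box then yields a hyperbolic fixed point $p\in A_{q,T(q)}$ of $T$, periodic for $f$, whose unstable bundle contains $E^{c}\oplus E^{uu}$ and whose stable bundle is exactly $E^{ss}(p)$, so the stable dimension of $p$ equals $\dim E^{ss}$.

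Since $p\in A_{q,T(q)}$ lies within the controlled distance $d$ of the cu-disc $W^{cu}_{r_{1}}(q,f)\subset f^{n_{0}}(D)$, Lemma~\ref{Lemma:From the Stable Manifold Theorem} delivers a transverse intersection $\mathcal{F}^{ss}_{\epsilon_{1}}(p,f)\pitchfork W^{cu}_{r_{1}}(q,f)$ inside $f^{n_{0}}(D)$. The diffeomorphism $f^{-n_{0}}$ carries this to a transverse intersection of $\mathcal{F}^{ss}(f^{-n_{0}}(p),f)$ with $D$, and $f^{-n_{0}}(p)$ is still a hyperbolic periodic point of stable dimension $\dim E^{ss}$ on the orbit of $p$. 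The hard part will be the graph-transform step: one must balance the cylinder parameters $\epsilon_{1},r_{1},d$ against the return time $l_{2}-l_{1}$ so that $T$ acts as a genuine horseshoe on $A_{q,T(q)}$, and in particular one must check that the pointwise center expansion supplied by Property SH along the orbit of $y$ persists, over the full $l_{2}-l_{1}$ iterates, at nearby orbits in the box -- this requires a careful use of the $C^{0}$-continuity of $Df_{\mid E^{c}}$ together with Pliss-type estimates, and is exactly the place where Property SH is indispensable, since plain partial hyperbolicity provides no uniform expansion along $E^{c}$.
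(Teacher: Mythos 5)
Your overall skeleton matches the paper's: iterate $D$ forward until it swallows a unit strong-unstable ball, apply Property SH there to get the point $y$, manufacture a periodic point near the $\omega$-limit of $y$, and transport a transverse intersection of its stable manifold back into $D$. But the two steps that carry the actual content are not established. First, the periodic point. You propose a graph-transform/horseshoe on the box $A_{q,T(q)}$ for the return map $T=f^{l_{2}-l_{1}}$, and you yourself flag the obstruction: Property SH gives center expansion only along the single orbit of $y$, and you would need it to persist at all nearby orbits over the unbounded return time $l_{2}-l_{1}$. That is not a technicality you can wave at with ``$C^{0}$-continuity of $Df_{\mid E^{c}}$ plus Pliss-type estimates'': continuity controls finitely many iterates only if the comparison orbit stays close to the reference orbit for that whole time, which is exactly what the horseshoe construction is supposed to produce -- the argument is circular as written. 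The paper avoids this entirely: the SH inequality $m\lbrace Df^{n}_{\mid E^{c}(f^{l}(y))\rbrace}>C\sigma^{n}$ passes to limit points, so $\omega(y)$ is a compact invariant set on which $E^{c}\oplus E^{uu}$ is uniformly expanded, i.e.\ a genuine hyperbolic set with stable bundle $E^{ss}$; one then picks a recurrent $q\in\omega(y)$ and invokes the Shadowing/Closing Lemma to get the hyperbolic periodic point $p$ of stable index $\dim E^{ss}$. You should replace your horseshoe step by this (or actually supply the uniform estimates, which amounts to reproving the closing lemma).

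Second, the return to $D$. You assert $W^{cu}_{r_{1}}(q,f)\subset f^{n_{0}}(D)$ and then pull back by $f^{-n_{0}}$; both halves are wrong. Your $q=f^{l_{1}}(y)$ lives in $f^{n_{0}+l_{1}}(D)$, not $f^{n_{0}}(D)$, and even at time $n_{0}$ a full cu-plaque of radius $r_{1}$ about a point of $\mathcal{F}^{uu}_{1}(f^{n_{0}}(x),f)$ need not sit inside the iterated disc, since forward iteration of $D$ is only guaranteed to expand the $uu$-direction, not the center direction, away from the SH orbit. The correct mechanism -- and the place where Property SH is really used a second time -- is Lemma~\ref{Lemma:Backward iterations contract}: the center expansion along the orbit of $y$ forces $f^{-m}\bigl(W^{cu}_{r_{0}}(f^{m}(y),f)\bigr)\subset W^{cu}_{\eta}(y,f)$, and then $f^{-n_{0}}$ of that lands in $W^{cu}_{d}(z,f)$ with $z=f^{-n_{0}}(y)\in W^{cu}_{d}(x,f)$; only then does the encapsulating cylinder (taken at $(x,z)$ inside $D$, not at $(q,T(q))$ up near the $\omega$-limit) convert the pulled-back intersection point into a transverse intersection of $\mathcal{F}^{ss}(f^{-k_{0}}(p),f)$ with $W^{cu}_{r_{1}}(x,f)\subset D$. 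As written, your final paragraph produces an intersection near $\omega(y)$ and never shows it survives the pull-back into $D$.
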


\begin{proof}
 
 Let $x$ be a point in $M$ and $D=W_{\beta}^{cu}(x,f)$ a center-unstable disc. For  $L=\frac{\beta}{2}$, 
choose $d$, $r_{1}$, $\epsilon_{1}$ and $A_{x,z}$ given by Lemma~\ref{Lemma:Encapsulating cylinder}.
Observe that if $t\in W_{r_{1}}^{cu}(x,f)$
then $d_{W_{r_{1}}^{cu}(x,f)}(t,x)\leq diam(A_{x,z})< L<\beta$. So

\begin{equation*}
\mathcal{F}_{d}^{uu}(x,f)\subset W_{d}^{cu}(x,f)\subset W_{r_{1}}^{cu}(x,f)\subset W_{\beta}^{cu}(x,f)
\end{equation*}

 Take $n_{0}$ such that $\mathcal{F}^{uu}_{1}(f^{n_{0}}(x),f)\subset f^{n_{0}}(\mathcal{F}^{uu}_{d}(x,f))$. Consider 
the point $y^{u}\in\mathcal{F}^{uu}_{1}(f^{n_{0}}(x),f)$ satisfying

\begin{equation}\label{Primera}
m\lbrace Df^{n}_{\mid E^{c}(f^{l}(y^{u}))}\rbrace> C\sigma^{n}\hspace{0.5 cm}\text{for any}\hspace{0.5 cm} n>0,\hspace{0.5 cm} l>0,
\end{equation}

\noindent where $C>0,\sigma >1$.\\
\noindent We may assume that $C=1$. Otherwise we take a fixed power of $f$. Make
$\lambda = \sigma^{-1}$, fix $\lambda_{1}\in (\lambda,1)$ and take $r_{0}$ as in 
Lemma~\ref{Lemma:Backward iterations contract}. Let $\eta >0$ be such that 

\begin{equation}\label{Segunda}
f^{-n_{0}}(W_{\eta}^{cu}(y^{u},f))\subset W_{d}^{cu}(f^{-n_{0}}(y^{u}),f).
\end{equation}

\noindent Choose $q\in \omega(y^{u})$, $\omega$-limit of $y^{u}$, being a recurrent point.
For $\epsilon >0$, there exists $\delta >0$ such that

\[
d(z_{1},z_{2})<\delta\Rightarrow\mathcal{F}^{ss}_{\epsilon}(z_{1},f)\pitchfork W_{r_{0}}^{cu}(z_{2},f)\neq\emptyset.
\]

From the Shadowing Lemma, there exists a periodic hyperbolic point $p\in M$, shadowing a periodic 
pseudo-orbit in $\omega(y^{u})$, constructed by means of the recurrent point $q$, with 
$d(p,q)<\frac{\delta}{2}$. Since $q\in \omega(y^{u})$, take $m\in\mathbb{N}^{*}$ such that 
$\lambda_{1}^{m}r_{0} <\eta$ and $d(f^{m}(y^{u}),q) < \frac{\delta}{2}$.
Now set $k_{0}=n_{0}+m$. Then $d(p,f^{m}(y^{u}))<\delta$ and therefore

\begin{equation}\label{Tres}
\mathcal{F}^{ss}_{\epsilon}(p,f)\pitchfork W_{r_{0}}^{cu}(f^{m}(y^{u}),f)\neq\emptyset.
\end{equation}

We know that $E^{cu}=E^{c}\oplus E^{u}$ is a dominated decomposition. Thus, there is 
$K>0$ such that 
$\parallel Df^{-n}_{\mid E^{cu}}\parallel\leq K \sup\lbrace\parallel Df^{-n}_{\mid E^{c}}\parallel,\parallel Df^{-n}_{\mid E^{u}}\parallel\rbrace$. For the sake of simplicity, we will assume that $K=1$. From 
\eqref{Primera} we have that
\begin{equation*}
\prod_{j=0}^{n}\parallel Df^{-1}_{\mid E^{c}(f^{-j}(f^{m}(y^{u})))}\parallel < \lambda^{n}, \quad 0\leq n\leq m
\end{equation*}
and therefore
\begin{equation*}
\prod_{j=0}^{n}\parallel Df^{-1}_{\mid E^{cu}(f^{-j}(f^{m}(y^{u})))}\parallel < \lambda^{n}, \quad 0\leq n\leq m.
\end{equation*}
From Lemma~\ref{Lemma:Backward iterations contract} we conclude 
that $f^{-m}(W_{r_{0}}^{cu}(f^{m}(y^{u}),f))\subset W_{\lambda_{1}^{m}r_{0}}^{cu}(y^{u},f)\subset W_{\eta}^{cu}(y^{u},f)$ 
and hence, using \eqref{Segunda}, we have 

\[
f^{-k_{0}}(W_{r_{0}}^{cu}(f^{m}(y^{u}),f))\subset f^{-n_{0}}(W_{\eta}^{cu}(y^{u},f))\subset W_{d}^{cu}(f^{-n_{0}}(y^{u}),f).
\] 

\noindent So, from \eqref{Tres}, 
\[f^{-k_{0}}(\mathcal{F}_{\epsilon}^{ss}(p,f))\pitchfork W_{d}^{cu}(z,f)\neq\emptyset
\]

\noindent where $z=f^{-n_{0}}(y^{u})\in\mathcal{F}_{d}^{uu}(x,f)\subset W_{d}^{cu}(x,f)$.

\noindent Now take $y\in f^{-k_{0}}(\mathcal{F}_{\epsilon}^{ss}(p,f))\cap W_{d}^{cu}(z,f)$. As $z\in W_{d}^{cu}(x,f)$, 
from Lemma~\ref{Lemma:Encapsulating cylinder} we conclude that $\mathcal{F}_{\epsilon_{1}}^{ss}(y,f)$ intersects transversally
$W_{r_{1}}^{cu}(x,f)$ in exactly one point and therefore $\mathcal{F}^{ss}(f^{-k_{0}}(p))$
intersects transversally $W_{r_{1}}^{cu}(x,f)\subset D$.

\end{proof}

\begin{Proposition}\label{Proposition:Propiedad 2}
Let $f:M\rightarrow M$ be a partially hyperbolic and transitive diffeomorphism. For any open set $U$ and any center-unstable disc $D$, there exists a local strong stable disc $\mathcal{F}_{\epsilon}^{ss}(x,f)$ contained in $U$ with a negative iterated which intersects $D$ transversally.
\end{Proposition}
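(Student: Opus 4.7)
The plan is to combine Lemma~\ref{Lemma:From the Stable Manifold Theorem} (which gives a transverse intersection of a center-unstable disc with any strong stable disc based at a sufficiently nearby point) with the fact that strong stable leaves are expanded under backward iteration, and to use topological transitivity to bridge the open set $U$ and the disc $D$ across a long orbit segment.

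First I would fix a base point $x_D$ in the interior of $D$ together with a radius $r_1$ so that $W^{cu}_{r_1}(x_D,f)\subset D$, and a base point $x_U\in U$ with $B(x_U,\delta_0)\subset U$. With $\epsilon_1<\epsilon$ (where $\epsilon$ comes from the Stable Manifold Theorem), Lemma~\ref{Lemma:From the Stable Manifold Theorem} applied to the pair $(\epsilon_1,r_1)$ yields $d'=d'(\epsilon_1,r_1)>0$ such that whenever $d(y,x_D)<d'$ the disc $\mathcal{F}^{ss}_{\epsilon_1}(y,f)$ meets $W^{cu}_{r_1}(x_D,f)$ transversally in exactly one point. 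This is the transversality mechanism I will feed the rest of the argument into.

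Next, since $f$ is transitive, for every threshold $N\in\mathbb{N}$ there exist $n\geq N$ and $y\in B(x_D,d')$ such that $f^n(y)\in B(x_U,\delta_0/2)$ (the set of such return times is infinite, a standard consequence of the existence of a dense orbit). Setting $x:=f^n(y)$, the fact that $f^n$ contracts strong stable distances by at most $\lambda^n$ yields the inclusion
\[
\mathcal{F}^{ss}_{\epsilon_1}(y,f)\;\subseteq\;f^{-n}\bigl(\mathcal{F}^{ss}_{\lambda^n\epsilon_1}(x,f)\bigr),
\]
and since $d(y,x_D)<d'$, the left-hand side intersects $W^{cu}_{r_1}(x_D,f)\subset D$ transversally at a single point, so the same holds for the right-hand side. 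Hence $\mathcal{F}^{ss}_{\lambda^n\epsilon_1}(x,f)$ and its $(-n)$-th iterate are the desired objects, provided only that the disc itself lies inside $U$.

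That final containment is the one genuine obstacle: one needs $\mathcal{F}^{ss}_{\lambda^n\epsilon_1}(x,f)\subset U$. Since strong stable discs of radius $\rho$ sit inside ambient balls of radius proportional to $\rho$, it is enough to have $\lambda^n\epsilon_1$ small enough that this ambient ball is contained in $B(x,\delta_0/2)\subset U$. Because $\lambda<1$, this inequality holds for all $n$ above an explicit threshold $N_0$ depending only on $\delta_0$, $\epsilon_1$ and the local geometry of $E^{ss}$; topological transitivity supplies return times $n\geq N_0$ of $B(x_D,d')$ into $B(x_U,\delta_0/2)$, so the construction can be carried out, closing the argument.
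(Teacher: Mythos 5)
Your argument is correct and is essentially the paper's proof run in reverse time: the paper picks a point of $U$ with dense backward orbit and lets its small strong stable disc, contained in $U$, expand under negative iteration until its base point is close enough to the centre of $D$ to force a transverse intersection, whereas you pick a point near the centre of $D$, push it forward into $U$, and pull the resulting tiny stable disc back. The extra care you take with the containment $\mathcal{F}^{ss}_{\lambda^{n}\epsilon_{1}}(x,f)\subset U$ and with the transversality mechanism via Lemma~\ref{Lemma:From the Stable Manifold Theorem} only makes explicit details the paper leaves implicit.
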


\begin{proof}
Just take $x\in U$ whose orbit is dense in $M$ and $\epsilon >0$ such that $\mathcal{F}_{\epsilon}^{ss}(x,f)\subset U$. 
There exists a sequence of negative iterates of $x$, $(f^{n_{k}}(x))_{k}$, converging to the center of the disc $D$. Taking $k$
big enough we can guarantee that $f^{n_{k}}(\mathcal{F}_{\epsilon}^{ss}(x,f))$ intersects $D$ transversally.

\end{proof}

\begin{Remark}\label{Remark:Puntos periodicos uniformes}
Periodic hyperbolic points whose existence is proven in  Proposition~\ref{Proposition:Propiedad 1} can be taken arbitrarily close to the
$w$-limit of a point $z$ such that $m\lbrace Df^{n}_{\mid E^{c}(f^{l}(z))}\rbrace> C\lambda^{n}_{0}$ for any $n>0$, $l>0$ like in  Definition~\ref{Definition:The SH property}. Consequently these periodic
hyperbolic points are chosen uniformly expanding in the central direction.
 \end{Remark}

\begin{Theorem}\label{Theorem:Observacion 11 del Referee}
Let $f\in Diff^{r}(M)$ be a partially hyperbolic diffeomorphism with the Property $SH$ and such that
$f^{n}$ is transitive for each $n\geq 1$. Then $f$ is topologically mixing.
\end{Theorem}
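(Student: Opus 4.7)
The plan is to prove that for any two non-empty open sets $U, V \subset M$, the set $\{m \geq 0 : f^m(U) \cap V \neq \emptyset\}$ is cofinite, which is precisely topological mixing. The idea is to combine Proposition~\ref{Proposition:Propiedad 1} with the inclination ($\lambda$-)lemma to propagate a center-unstable disc inside $U$ onto the local unstable manifold of a periodic hyperbolic point $q$, and then to use Proposition~\ref{Proposition:Propiedad 2} applied to $f^\pi$ (where $\pi$ is the period of $q$) to reach every residue class modulo $\pi$.

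More explicitly, I would first choose a center-unstable disc $D \subset U$. By Proposition~\ref{Proposition:Propiedad 1}, there exists a periodic hyperbolic point of stable dimension $\dim(E^{ss})$; denote by $q$ a point in its orbit whose strong stable manifold $\mathcal{F}^{ss}(q,f)$ meets $D$ transversally, and let $\pi$ be its period. Thanks to Remark~\ref{Remark:Puntos periodicos uniformes}, $q$ is a hyperbolic fixed point of $f^\pi$, with stable bundle $E^{ss}(q)$ and unstable bundle $E^{cu}(q)$, on which $Df^\pi$ is uniformly hyperbolic. Fix a small $r > 0$ so that $W^{cu}_{r}(q,f)$ represents the local unstable manifold of $q$ for $f^\pi$. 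The inclination lemma applied to $f^\pi$ at $q$ then gives: for every $\varepsilon > 0$ there exists $N_\varepsilon$ such that for every $n \geq N_\varepsilon$, $f^{n\pi}(D)$ contains a center-unstable disc $\tilde D_n$ which is $\varepsilon$-$C^1$-close to $W^{cu}_{r}(q,f)$.

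To cover each residue class modulo $\pi$, for $j \in \{0, 1, \dots, \pi - 1\}$ set $V_j = f^{-j}(V)$, a non-empty open set. Since $f^\pi$ is partially hyperbolic with the same invariant splitting as $f$ and, by hypothesis, transitive, Proposition~\ref{Proposition:Propiedad 2} applied to $f^\pi$ with the open set $V_j$ and the $cu$-disc $W^{cu}_{r}(q,f)$ yields a local strong stable disc $S_j \subset V_j$ and an integer $k_j \geq 0$ with $(f^\pi)^{-k_j}(S_j) \pitchfork W^{cu}_{r}(q,f)$; equivalently, $f^{k_j \pi}(W^{cu}_{r}(q,f)) \pitchfork S_j$. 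Since $f^{k_j\pi}(\tilde D_n)$ is $C^1$-close to $f^{k_j\pi}(W^{cu}_{r}(q,f))$ on compact sets and is contained in $f^{(n+k_j)\pi}(D)$, the stability of transverse intersections in complementary dimensions forces $f^{(n+k_j)\pi}(D) \cap S_j \neq \emptyset$ for all sufficiently large $n$. Since $D \subset U$ and $S_j \subset f^{-j}(V)$, this yields $f^{(n+k_j)\pi + j}(U) \cap V \neq \emptyset$, and hence an $N_j$ such that $f^m(U) \cap V \neq \emptyset$ for every $m \geq N_j$ with $m \equiv j \pmod{\pi}$. Taking $N = \max_{0 \leq j < \pi} N_j$, we conclude $f^m(U) \cap V \neq \emptyset$ for every $m \geq N$.

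The main delicate point is the combination step, where one must convert the $\lambda$-lemma's $C^1$-proximity into a genuine transverse intersection after applying $f^{k_j \pi}$; this reduces to the standard fact that transverse intersections of a $cu$-disc and a strong stable disc in complementary dimensions are open in the $C^1$ topology. The crucial use of the hypothesis ``$f^n$ transitive for every $n \geq 1$'' (rather than just $f$) is precisely to allow Proposition~\ref{Proposition:Propiedad 2} to be invoked for $f^\pi$; without it, one would recover $f^m(U) \cap V \neq \emptyset$ only for $m$ in a single arithmetic progression modulo $\pi$, which is strictly weaker than cofiniteness.
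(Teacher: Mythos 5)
Your proof is correct and follows essentially the same route as the paper's: Proposition~\ref{Proposition:Propiedad 1} to produce the periodic hyperbolic point, Proposition~\ref{Proposition:Propiedad 2} applied to the power $f^{\pi}$ (this is exactly where the hypothesis that all powers are transitive is used), and the $\lambda$-lemma for $f^{\pi}$ to obtain the intersection for all large iterates. The only difference is cosmetic: you spell out the passage through the residue classes modulo the period, whereas the paper first concludes that $f^{k}$ is topologically mixing and then deduces the same for $f$, which amounts to the same argument.
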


\begin{proof}
Let $\mathcal{U},\mathcal{V}\subset M$ be open sets. Take $x\in\mathcal{U}$ arbitrary and $\eta>0$
such that $D=W_{\eta}^{cu}(x,f)\subset\mathcal{U}$. From Proposition~\ref{Proposition:Propiedad 1},
there exists a periodic hyperbolic point $p$ such that $\mathcal{F}^{ss}(p,f)$ intersects $D$.
Assume $k$ to be the period of $p$. Since $f^{k}$ is transitive, using 
Proposition~\ref{Proposition:Propiedad 2}, there exists a local strong stable disc 
$\mathcal{F}_{\epsilon}^{ss}(x,f)\subset\mathcal{W}$ with a negative iterated of $f^{k}$,
say $f^{-kl}$, which intersects $W_{r}^{cu}(p,f)$ for some $r$ sufficiently small.
Thus applying $\lambda$-Lemma for $f^{k}$, we get $n_{0}\in\mathbb{N}$ such that
\[
\mathcal{F}_{\epsilon}^{ss}(x,f)\pitchfork f^{nk}(D)\neq\varnothing,\quad\forall n\geq n_{0}.
\]

\noindent Hence, 

\[
f^{-kl}(W)\cap f^{nk}(\mathcal{U})\neq\varnothing,\quad\forall n\geq n_{0}.
\]
\noindent Therefore $f^{k}$ is topologically mixing. Consequently $f$ is topologically mixing.
\end{proof}

\noindent Now we give our version of Brin's Theorem. 
Observe that the condition of accessibility in relation to open sets is weaker than the
condition of accessibility in the original version of Brin's Theorem.\\

\noindent Let $g\in Diff^{r}(M)$. We will denote by $\Omega(g)$ the set of the non-wandering points for $g$.

\begin{Theorem}\label{Theorem:Brin's Theorem}
Let  $f\in Diff^{r}(M)$ be a partially hyperbolic 
diffeomorphism exhibiting the accessibility property in relation to open sets. If 
$\Omega(f)=M$ then $f$ is transitive.
\end{Theorem}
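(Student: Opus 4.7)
Plan. Given non-empty open sets $U,V\subset M$ I want $N\geq 0$ with $f^{N}(U)\cap V\neq\emptyset$. Set $A:=\bigcup_{n\geq 0}f^{n}(U)$, a non-empty, open, $f$-forward invariant set. I plan to show that $\overline{A}=M$; the open set $V$ then meets $A$, producing the desired $N$.

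The crux is to prove that $\overline{A}$ is saturated by both strong foliations $\mathcal{F}^{ss}$ and $\mathcal{F}^{uu}$. For $\mathcal{F}^{ss}$-saturation, fix $x\in A$, $y\in\mathcal{F}^{ss}(x,f)$, set $R:=d_{\mathcal{F}^{ss}}(x,y)$, and pick $\delta>0$ with $B_{\delta}(x)\subset A$. Given $\eta>0$, I use $\Omega(f)=M$ — together with the standard observation that $\Omega(f)=M$ forces arbitrarily large return times for every open set (non-periodic points yield unbounded returns, while a periodic point of period $p$ returns at every multiple of $p$) — to produce $v\in B_{\eta}(y)$ and $n\geq 1$ with $f^{n}(v)\in B_{\eta}(y)$ and $\lambda^{n}R<\eta$, where $\lambda<1$ is the stable contraction rate. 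By continuity of the strong stable foliation (Theorem~\ref{Theorem:Continous dependence of foliations}), the point $u\in\mathcal{F}^{ss}(v,f)$ at leaf-distance $R$ from $v$ lies in $B_{\delta}(x)\subset A$ for $\eta$ sufficiently small. Forward invariance of $A$ gives $f^{n}(u)\in A$, and stable contraction yields $d(f^{n}(u),f^{n}(v))=O(\lambda^{n}R)=O(\eta)$, so $f^{n}(u)\in A$ lies within $O(\eta)$ of $y$. Since $\eta$ was arbitrary, $y\in\overline{A}$.

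The $\mathcal{F}^{uu}$-saturation is the dual argument: apply $\Omega(f)=M$ to $B_{\eta}(x)\subset A$ to obtain $u\in A$ with $f^{n}(u)\in B_{\eta}(x)$ and $n$ arbitrarily large; pick $y'\in\mathcal{F}^{uu}(f^{n}(u),f)$ at leaf-distance $R$ from $f^{n}(u)$, so $y'$ is close to $y$ by the continuity of the unstable foliation; backward contraction on $\mathcal{F}^{uu}$ then forces $f^{-n}(y')\in\mathcal{F}^{uu}(u,f)$ at leaf-distance $\lambda^{n}R$ from $u$, hence in $B_{\delta}(x)\subset A$ for $n$ large; by forward invariance $y'=f^{n}(f^{-n}(y'))\in A$, so $y\in\overline{A}$. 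Both saturations pass from $A$ to $\overline{A}$ by approximating points of $\overline{A}$ by points of $A$ and using continuity of leaves. Once $\overline{A}$ is saturated by both strong foliations it is saturated by accessibility classes. By the Remark in Section~\ref{subsec:Accessibility}, accessibility in relation to open sets provides a residual set $\mathcal{R}\subset M$ of points with dense accessibility class; the non-empty open set $A$ meets $\mathcal{R}$, and for any $x\in\mathcal{R}\cap A$ the dense class $\mathcal{C}(x,f)\subset\overline{A}$ forces $\overline{A}=M$.

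The main obstacle is the saturation claim, specifically the simultaneous coordination of two constraints on the return time $n$ supplied by $\Omega(f)=M$: it must be large enough to enforce the leaf-contraction bound $\lambda^{n}R<\eta$, while still being a legitimate return time for the ball $B_{\eta}(y)$ (respectively $B_{\eta}(x)$). Establishing as a preliminary lemma that $\Omega(f)=M$ yields arbitrarily large return times for every open set is the essential ingredient making the rest of the argument go through.
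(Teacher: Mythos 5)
Your argument is correct and is essentially the approach the paper relies on: the paper's proof simply defers to Brin's original argument in \cite{P}, which is exactly your scheme of showing that the closure of the forward orbit $\bigcup_{n\geq 0}f^{n}(U)$ is saturated by both strong foliations (via $\Omega(f)=M$, arbitrarily large return times, and leafwise contraction) and then invoking accessibility to conclude density. Your adaptation of the last step to the weaker hypothesis of accessibility in relation to open sets — via the residual set of points with dense accessibility class, or equivalently by applying the definition directly to the pair $(A,V)$ — is precisely the observation the paper highlights in the Introduction, so the proposal matches the intended proof.
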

\begin{proof}
The proof is similar to the original version in \cite{P}.
\end{proof}

\noindent The condition of $f^{n}$ being transitive for each $n\geq 1$ is 
implied by the following Proposition:

\begin{Proposition}\label{Proposition:Observacion 14 del Referee}
Let $f\in Diff^{r}(M)$ be a partially hyperbolic diffeomorphism. If $\Omega (f)=M$ then $\Omega (f^{n})=M$
for each $n\geq 1$. In particular, $\Omega (f)=M$ and $f$ accessible in relation to open sets imply
that $f^{n}$ is transitive for each $n\geq 1$.
\end{Proposition}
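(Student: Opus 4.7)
The plan is to address the main implication $\Omega(f) = M \Rightarrow \Omega(f^n) = M$ first, and then deduce the ``in particular'' clause from Brin's Theorem. Since $f^n$ is itself partially hyperbolic (sharing the invariant splitting of $f$) and inherits accessibility in relation to open sets from $f$ (as remarked earlier in the paper), applying Theorem~\ref{Theorem:Brin's Theorem} to $f^n$ will yield its transitivity once we know $\Omega(f^n) = M$.

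For the main implication, I intend to show that $\mathrm{Rec}(f) \subseteq \Omega(f^n)$ and that $\mathrm{Rec}(f)$ is dense in $M$. The inclusion rests on a pigeonhole: for a recurrent $x$, fix $k_i \to \infty$ with $f^{k_i}(x) \to x$ and pass to a subsequence on which $k_i$ is constant modulo $n$. For any $i < j$ in this subsequence, $k_j - k_i$ is a positive multiple of $n$, while $f^{k_i}(x)$ and $f^{k_j}(x) = f^{k_j - k_i}(f^{k_i}(x))$ both tend to $x$; thus every neighborhood of $x$ meets some $f^{k_j - k_i}$-image of itself, so $x \in \Omega(f^n)$.

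Density of $\mathrm{Rec}(f)$ will be obtained via Baire category. Write $\mathrm{Rec}(f) = \bigcap_{m, N \geq 1} A_{1/m, N}$, where $A_{\epsilon, N} = \bigcup_{k \geq N} \{y \in M : d(f^k(y), y) < \epsilon\}$ is open in $M$. It suffices to show each $A_{\epsilon, N}$ is dense in $\Omega(f) = M$. Given $x \in M$, set $B = B(x, \epsilon/2)$ and consider $R(B) = \{k \geq 1 : f^k(B) \cap B \neq \emptyset\}$. The crucial subclaim is that $R(B)$ is infinite. If $x$ is periodic this is immediate. Otherwise, fix a nested basis $U_m \downarrow \{x\}$ with $U_1 \subseteq B$: a common $k \in \bigcap_m R(U_m)$ would yield $y_m \in U_m$ with $f^k(y_m) \in U_m$, and passing to the limit would force $f^k(x) = x$, contradicting aperiodicity. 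Hence the nonempty decreasing family $\{R(U_m)\}$ has empty intersection, which is impossible if $R(B) \supseteq R(U_m)$ is finite. Thus $R(B)$ is infinite, and choosing any $k \geq N$ in $R(B)$ produces $y \in B$ with $f^k(y) \in B$, placing $y$ in $A_{\epsilon, N} \cap B(x, \epsilon/2)$.

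Applying Baire's Theorem in the compact metric space $M$ then yields $\mathrm{Rec}(f)$ as a dense $G_\delta$, so $\overline{\mathrm{Rec}(f)} = M$. Since $\Omega(f^n)$ is closed and contains $\mathrm{Rec}(f)$, one concludes $\Omega(f^n) \supseteq \overline{\mathrm{Rec}(f)} = M$, proving the main claim. I expect the main obstacle to be the careful verification of the infinitude of $R(B)$ in the aperiodic case: the finite-set descent and the use of continuity of $f$ to rule out a common return time require some care, though the ideas are standard. The partial hyperbolicity hypothesis plays no role in this portion of the argument; it enters only via Brin's Theorem in the ``in particular'' clause.
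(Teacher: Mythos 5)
Your proof is correct, but it follows a genuinely different route from the paper's. The paper works directly inside a given open set $U$: using $\Omega(f)=M$ repeatedly, it builds a nested chain of open sets $U\supseteq U_{0}$, $U_{1}\subseteq f^{k_{0}}(U_{0})$, \dots together with return times $k_{0},k_{1},\dots$ so that all relevant images remain inside $U$, and then applies the pigeonhole principle to the partial sums $t_{i}=k_{0}+\cdots+k_{i}$ modulo $n$ to extract a return of $U$ to itself under a power of $f^{n}$; this is purely topological and uses nothing beyond continuity. You instead factor the statement through the classical lemma that $\Omega(f)=M$ forces the recurrent points to be residual, and then apply the pigeonhole modulo $n$ to the return times of a single recurrent orbit rather than to cumulative return times of nested open sets. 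Your Baire argument is sound, including the finite-descent proof that the return-time set $R(B)$ of a non-periodic non-wandering point is infinite; the two pigeonhole steps are essentially the same combinatorial observation in different clothing. What your version buys is a reusable intermediate fact, $\overline{\mathrm{Rec}(f)}=M$, at the cost of invoking Baire's theorem where the paper needs only a finite nested construction. One small point to tighten: as written you only show that $A_{\epsilon,N}$ meets every ball of radius $\epsilon/2$, which is $\epsilon/2$-density rather than density; either run the same argument in $B(x,\rho)$ for arbitrary $\rho\leq\epsilon/2$ (the estimate $d(f^{k}(y),y)<2\rho\leq\epsilon$ still holds), or observe that $A_{\epsilon,N}\supseteq A_{\epsilon',N}$ for all $\epsilon'<\epsilon$. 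Your handling of the ``in particular'' clause via Theorem~\ref{Theorem:Brin's Theorem}, and your remark that partial hyperbolicity plays no role in the main implication, both match the paper.
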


\begin{proof}
Let $n \geq 1$ and let $U$ be an open set in $M$. As $\Omega (f)=M$ there exist $x_{0}\in U$ and $k_{0}\geq 1$
such that $f^{k_{0}}(x_{0})\in U$. By continuity, there exists an open set $U_{0}$ with $U_{0}\subset U$,
$x_{0}\in U_{0}$ and $f^{k_{0}}(U_{0})\subset U$. This way we can define recurrently a sequence of points
$x_{0},x_{1},\cdots\in M$, a sequence of positive natural numbers $k_{0},k_{1},\cdots$ and a sequence
$U_{0},U_{1},\cdots$ of open sets in $M$ such that for any $i\geq 0$ we have $x_{i+1}\in f^{k_{i}}(U_{i})$,
$f^{k_{i+1}}(x_{i+1})\in  f^{k_{i}}(U_{i})$, $x_{i+1}\in U_{i+1}\subset f^{k_{i}}(U_{i})$ and
$f^{k_{i+1}}(U_{i+1})\subset f^{k_{i}}(U_{i})$.

It is easy to see that for any pair of non negative integers $r,s$ holds that\\ 
$f^{-k_{r}-k_{r+1}-\cdots-k_{r+s}}(U_{r+s+1})\subset U_{r}$ and  $U_{r}\subset U$. Now define the numbers $t_{i}$
by $t_{i}=k_{0}+k_{1}+\cdots+k_{i}$. Let $j$ and $l\neq 0$ be such that $t_{j+l}\equiv t_{j}\quad mod (n)$.
Then there exist an integer $m$ such that $k_{j+1}+\cdots+k_{j+l}=mn$.
If we set $W=f^{-k_{j+1}-\cdots-k_{j+l}}(U_{j+l+1})$ we know that $W\subset U_{j+1}\subset U$
and that $f^{mn}(W)=U_{j+l+1}\subset U$. Hence taking $y\in W\subset U$ we have $f^{mn}(y)\in U$. 

\end{proof}

\begin{Remark}
Assume that a partially hyperbolic diffeomorphism $f$ has the accessibility property. If $f$ is 
transitive then $f^{n}$ is also transitive for every $n\in\mathbb{Z}^{*}$.
\end{Remark}

\begin{Corollary}\label{Corollary:Observacion 17 del Referee}
Let $f\in Diff^{r}(M)$ be a partially hyperbolic diffeomorphism, accessible in relation to open sets,
satisfying $\Omega (f)=M$ and the Property SH, then $f$ is topologically mixing.
\end{Corollary}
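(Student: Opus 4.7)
The plan is to assemble the corollary directly from three ingredients already proved earlier in the section. The hypotheses $\Omega(f)=M$ and accessibility in relation to open sets are tailored so that Proposition~\ref{Proposition:Observacion 14 del Referee} applies, while Property SH is tailored to feed Theorem~\ref{Theorem:Observacion 11 del Referee}. So essentially no new work is needed; the whole proof is a chain of invocations.

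First I would apply Proposition~\ref{Proposition:Observacion 14 del Referee}: from $\Omega(f)=M$ together with accessibility in relation to open sets, this proposition gives that $f^{n}$ is transitive for every $n\geq 1$. The only subtle point here is that one must know $f^{n}$ is itself accessible in relation to open sets in order to apply Brin's theorem (Theorem~\ref{Theorem:Brin's Theorem}) to $f^{n}$; this is guaranteed by the observation made just after Definition~\ref{Definition:Accesible pairs of open sets}, namely that accessibility in relation to open sets is preserved under taking integer iterates, so that $f^{n}$ still satisfies the hypothesis of Brin's theorem and is therefore transitive.

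Next I would feed this conclusion into Theorem~\ref{Theorem:Observacion 11 del Referee}: since $f$ is partially hyperbolic, satisfies Property SH, and $f^{n}$ is transitive for each $n\geq 1$, Theorem~\ref{Theorem:Observacion 11 del Referee} yields directly that $f$ is topologically mixing, which is exactly the statement to prove.

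Since every step is an appeal to a previously established result, there is really no substantial obstacle. If there is any delicate point, it is only the verification that accessibility in relation to open sets transfers from $f$ to each $f^{n}$, but this has already been noted in the preliminaries and is an immediate consequence of the definition together with the fact that $\mathcal{F}^{ss}$ and $\mathcal{F}^{uu}$ are invariant under $f$.
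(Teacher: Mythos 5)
Your proposal is correct and is exactly the argument the paper intends: Proposition~\ref{Proposition:Observacion 14 del Referee} (via Theorem~\ref{Theorem:Brin's Theorem} applied to each $f^{n}$, which remains accessible in relation to open sets) gives transitivity of every $f^{n}$, and Theorem~\ref{Theorem:Observacion 11 del Referee} then yields topological mixing. The paper leaves this corollary without a written proof precisely because it is this immediate chain of invocations, and your note on the iterate-invariance of accessibility in relation to open sets is the same observation the paper records after Definition~\ref{Definition:Accesible pairs of open sets}.
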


\begin{Corollary}
Let $f$ be a partially hyperbolic diffeomorphism, accessible, topolo\-gically transitive and satisfying
Property SH. Then $f$ is topologically mixing.
\end{Corollary}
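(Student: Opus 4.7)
The plan is to combine the two results immediately preceding the corollary; no genuinely new work is required. The hypotheses here are in fact strictly stronger than those of Corollary~\ref{Corollary:Observacion 17 del Referee}, since full accessibility implies accessibility in relation to open sets (as observed in Section~\ref{subsec:Accessibility}) and topological transitivity trivially gives $\Omega(f)=M$. One could therefore just invoke that corollary, but let me describe the more direct route that uses the Remark placed just above the statement, since that seems to be the intended presentation.

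First, I would apply the preceding Remark: a partially hyperbolic accessible diffeomorphism that is transitive satisfies that $f^{n}$ is transitive for every $n\in\mathbb{Z}^{*}$, and in particular for every $n\geq 1$. The justification of the Remark itself is short and already implicit in the paper: transitivity of $f$ yields $\Omega(f)=M$, Proposition~\ref{Proposition:Observacion 14 del Referee} then gives $\Omega(f^{n})=M$, and Brin's Theorem (Theorem~\ref{Theorem:Brin's Theorem}) applied to $f^{n}$ (which remains accessible in relation to open sets) furnishes the transitivity of $f^{n}$.

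Second, with the transitivity of all iterates $f^{n}$, $n\geq 1$, in hand and Property SH as a standing hypothesis, I would invoke Theorem~\ref{Theorem:Observacion 11 del Referee}, which states exactly that a partially hyperbolic diffeomorphism with Property SH whose every iterate is transitive must be topologically mixing. This closes the argument.

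There is essentially no obstacle in this corollary; the substantive content lives upstream. The Property~SH machinery is the one used in Proposition~\ref{Proposition:Propiedad 1} to produce a hyperbolic periodic point whose stable manifold transversally meets an arbitrary center-unstable disc, and the $\lambda$-lemma argument underlying Theorem~\ref{Theorem:Observacion 11 del Referee} then converts this into topological mixing. The corollary itself is a clean packaging step, trading the ``accessibility in relation to open sets plus $\Omega(f)=M$'' hypothesis of Corollary~\ref{Corollary:Observacion 17 del Referee} for the more familiar pair ``accessibility plus topological transitivity''.
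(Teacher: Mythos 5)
Your proposal is correct and matches the intended derivation: the paper states this corollary without proof precisely because it follows immediately from the preceding Remark together with Theorem~\ref{Theorem:Observacion 11 del Referee} (or, equivalently, by checking that accessibility and transitivity imply the hypotheses of Corollary~\ref{Corollary:Observacion 17 del Referee}). Both routes you describe are sound and no further argument is needed.
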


\section{Property SH and Accessibility}
\label{sec:Property SH and Accessibility}

\noindent In this section, we follow with other results, which provide facts about 
the accessibility classes, accessibility in relation to open sets and robust transitivity, considering Property SH.

\begin{Theorem}\label{Theorem:Robustness of density of accessibility classes}
Let $f\in Diff^{r}(M)$ 
be a partially hyperbolic accessible diffeomorphism exhibiting Property SH. Then 
there exists a $C^{1}$ neighbourhood  of $f$, $\mathcal{U}=\mathcal{U}(f)$, such that for every $g\in\mathcal{U}$ and $p\in M$ 
it follows that $\mathcal{C}(p,g)$ is dense in $M$.
\end{Theorem}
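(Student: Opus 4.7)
The plan is to combine Proposition~\ref{Proposition:Robustly connecting to the center unstable} with the robustness of Property SH (Theorem~\ref{Theorem:SH is robust}) and with Proposition~\ref{Proposition:Propiedad 1}, which produces periodic hyperbolic points whose stable manifolds pierce any center-unstable disc. Fix a small constant $r_{0}>0$ and let $\mathcal{U}(f)$ sit inside both the neighborhood of Theorem~\ref{Theorem:SH is robust} (so that every $g\in\mathcal{U}(f)$ has Property SH) and the neighborhood provided by Proposition~\ref{Proposition:Robustly connecting to the center unstable} with $r=r_{0}$.

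Fix $g\in\mathcal{U}(f)$ and $p\in M$; it suffices to show that for every $q\in M$ and every $\varepsilon>0$ the set $\mathcal{C}(p,g)$ meets $B(q,\varepsilon)$. First, apply Proposition~\ref{Proposition:Propiedad 1} to $g$ and the center-unstable disc $D=W^{cu}_{\varepsilon/2}(q,g)$: one obtains a periodic hyperbolic point $p_{0}$, uniformly expanding along $E^{c}$ by Remark~\ref{Remark:Puntos periodicos uniformes}, whose strong stable manifold meets $D$ transversally at a point $y\in B(q,\varepsilon/2)$. Second, apply Proposition~\ref{Proposition:Robustly connecting to the center unstable} with source $p$ and target $p_{0}$: this produces $z\in W^{cu}_{r_{0}}(p_{0},g)\cap\mathcal{C}(p,g)$; since $p_{0}$ expands $E^{c}$, one has $W^{cu}_{r_{0}}(p_{0},g)=W^{u}_{loc}(p_{0},g)$, so $z\in W^{u}_{loc}(p_{0},g)$, and by the $\mathcal{F}^{ss}$-saturation of accessibility classes the local stable disc $\mathcal{F}^{ss}_{\delta}(z,g)$ sits inside $\mathcal{C}(p,g)$ and is transverse to $W^{u}_{loc}(p_{0},g)$ at $z$.

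The final step is to produce a point of $\mathcal{C}(p,g)$ within $\varepsilon/2$ of $y$ using the hyperbolic structure at $p_{0}$. The natural tool is the $\lambda$-Lemma applied at the period-$k$ point $p_{0}$: the backward iterates $g^{-nk}(\mathcal{F}^{ss}_{\delta}(z,g))$, a family of discs of dimension $\dim E^{ss}=\dim W^{s}(p_{0},g)$ transverse to $W^{u}_{loc}(p_{0},g)$, accumulate in the $C^{1}$ topology on arbitrarily large compact portions of $W^{s}(p_{0},g)=\mathcal{F}^{ss}(p_{0},g)$; for $n$ large this accumulation reaches a point within $\varepsilon/2$ of $y$, hence within $\varepsilon$ of $q$.

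The hard part is that these backward iterates $g^{-nk}(\mathcal{F}^{ss}_{\delta}(z,g))$ a priori belong to the shifted accessibility class $\mathcal{C}(g^{-nk}(p),g)$ rather than to $\mathcal{C}(p,g)$, because accessibility classes are not $g$-invariant. To close the argument one must transfer the $\lambda$-lemma accumulation back into $\mathcal{C}(p,g)$ itself: since each iterate is a strong stable leaf, one prepends a $us$-path of uniformly bounded length from $p$, available by Corollary~\ref{Corollary:Number and lengths of paths legs can be robustly limited}, to realize the approximating stable disc as a subset of $\mathcal{C}(p,g)$, and thus locates a point of $\mathcal{C}(p,g)$ in $B(q,\varepsilon)$. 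This $us$-path bookkeeping through the iteration is the main technical hurdle.
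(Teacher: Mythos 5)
Your argument diverges from the paper's and, as you yourself flag, breaks down at exactly the step you defer. The obstruction is that accessibility classes satisfy $g(\mathcal{C}(p,g))=\mathcal{C}(g(p),g)$, so the discs $g^{-nk}(\mathcal{F}^{ss}_{\delta}(z,g))$ produced by the $\lambda$-Lemma lie in $\mathcal{C}(g^{-nk}(p),g)$, not in $\mathcal{C}(p,g)$. Your proposed repair --- prepending a bounded $us$-path from $p$ via Corollary~\ref{Corollary:Number and lengths of paths legs can be robustly limited} --- is not available: that Corollary is a statement about the accessible diffeomorphism $f$ itself, whereas here you need $us$-paths for the perturbed $g$, which is not known to be accessible (indeed, that is essentially what the Theorem is trying to establish). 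The only robust connecting tool in the paper, Proposition~\ref{Proposition:Robustly connecting to the center unstable}, only lands you in a \emph{center-unstable} disc around a prescribed target, and a center-unstable disc is not contained in any single accessibility class (classes are saturated only by $\mathcal{F}^{ss}$ and $\mathcal{F}^{uu}$ leaves); so it can neither identify $\mathcal{C}(g^{-nk}(p),g)$ with $\mathcal{C}(p,g)$ nor place the accumulating stable discs inside $\mathcal{C}(p,g)$. For the same reason, even your second step only gives $z\in W^{cu}_{r_{0}}(p_{0},g)\cap\mathcal{C}(p,g)$; the point $p_{0}$ itself and its stable leaf through $y$ are never shown to be accessible from $p$.

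The paper's proof avoids all of this by reversing the order of operations: it first iterates the source forward, applying Proposition~\ref{Proposition:Robustly connecting to the center unstable} from $g^{k}(p)$ to a point $q'$ of $W^{cu}_{r}(g^{m}(y^{u}),g)$, where $y^{u}$ is an SH point chosen over an unstable plaque inside the target open set $\mathcal{V}$ and $k=n_{0}+m$. Then it pulls everything back by $g^{-k}$: since $g^{-k}$ carries $us$-paths to $us$-paths, $g^{-k}(q')\in\mathcal{C}(p,g)$ automatically, and the SH expansion along the orbit of $y^{u}$ combined with Lemma~\ref{Lemma:Backward iterations contract} forces $g^{-k}(W^{cu}_{r}(g^{m}(y^{u}),g))\subset\mathcal{V}$, so $g^{-k}(q')\in\mathcal{V}\cap\mathcal{C}(p,g)$. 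No periodic point, Shadowing Lemma, or $\lambda$-Lemma is needed. If you want to keep your architecture, you would have to redo the connecting proposition with $g^{k}(p)$ as source for a suitably chosen $k$ depending on the target, which is precisely the paper's device.
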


\begin{proof}
From Theorem~\ref{Theorem:SH is robust} we know that there exist a neighbourhood 
$\mathcal{U}_{0}(f)$, $C'> 0$ and $\sigma> 1$ such that for every $g\in\mathcal{U}_{0}$ and $x\in M$ 
there exists a point $y^{u}\in\mathcal{F}^{uu}_{1}(x,g)$ satisfying

\begin{equation}\label{A}
m\lbrace Dg^{n}_{\mid E^{c}(g^{l}(y^{u}))}\rbrace> C'\sigma^{n}\hspace{0.5 cm}\text{for any}\hspace{0.5 cm} n>0,\hspace{0.5 cm} l>0.
\end{equation}

We may assume that $C=1$. Otherwise we take a fixed power of every $g\in\mathcal{U}_{0}$. Let 
$\lambda = \sigma^{-1}$ and fix $0 <\lambda <\lambda_{1} < 1$ and let $r$ be as in 
Lemma~\ref{Lemma:Backward iterations contract}. For this $r > 0$ take 
$\mathcal{U}(f)\subset\mathcal{U}_{0}(f)$, $l > 0$ and $R > 0$ as in 
Proposition~\ref{Proposition:Robustly connecting to the center unstable}. We will prove that for every 
$g\in\mathcal{U}(f)$ and $p\in M$ we have that $\mathcal{C}(p,g)$ is dense in $M$.\\
Let $\mathcal{V}\subset M$ be an open set and let
$z\in\mathcal{V}$. Let $\beta > 0$ be such that 
$\mathcal{F}^{uu}_{\beta}(z,g)\subset\mathcal{V}$. Take $n_{0}$ such that 
$g^{n_{0}}(\mathcal{F}^{uu}_{\beta}(z,g))\supset\mathcal{F}^{uu}_{1}(g^{n_{0}}(z),g)$. Consider 
the point $y^{u}\in\mathcal{F}^{uu}_{1}(g^{n_{0}}(z),g)$ given by Theorem~\ref{Theorem:SH is robust} and 
let $\eta > 0$ be such that 

\begin{equation}\label{B}
g^{-n_{0}}(W^{cu}_{\eta}(y^{u},g))\subset\mathcal{V}.
\end{equation}
Choose a positive integer $m$ such that $\lambda^{m}_{1} r <\eta$ and set $k=n_{0}+m$. From 
Proposition~\ref{Proposition:Robustly connecting to the center unstable} for $q = g^{m}(y^{u})$ there 
exists $q'\in W^{cu}_{r}(q,g)$ such that one can find a $us$-path by $g$ that starts at $g^{k}(p)$, ends 
at $q'$, and has at most $l$ legs, each of them with length at most $R$.\\
Since $E^{cu}=E^{c}\oplus E^{u}$ and this decomposition is dominated, there is $L > 0$ such that 
$\parallel Dg^{-n}_{\mid E^{cu}}\parallel\leq L \sup\lbrace\parallel Dg^{-n}_{\mid E^{u}}\parallel ,\parallel Dg^{-n}_{\mid E^{c}}\parallel\rbrace$.
For the sake of simplicity, we will assume that $L = 1$. From \eqref{A} we know that 
\begin{equation*}
\prod_{j=0}^{n}\parallel Dg^{-1}_{\mid E^{c}(g^{-j+m}(y^{u}))}\parallel < \lambda^{n}, \quad 0\leq n\leq m
\end{equation*}
and therefore
\begin{equation*}
\prod_{j=0}^{n}\parallel Dg^{-1}_{\mid E^{cu}(g^{-j+m}(y^{u}))}\parallel < \lambda^{n}, \quad 0\leq n\leq m.
\end{equation*}
From Lemma~\ref{Lemma:Backward iterations contract} we conclude that
\begin{equation*}
g^{-m}(W_{r}^{cu}(g^{m}(y^{u}),g))\subset W_{\lambda_{1}^{m}r}^{cu}(y^{u},g)\subset W_{\eta}^{cu}(y^{u},g)
\end{equation*}
and hence, using \eqref{B}, we have $g^{-k}(W_{r}^{cu}(g^{m}(y^{u}),g))\subset \mathcal{V}$. Since 
$q'\in W_{r}^{cu}(g^{m}(y^{u}),g)$ we get $g^{-k}(q')\in\mathcal{V}$. Thus, there 
exists a $us$-path by $g$ that starts at $p$, ends at $g^{-k}(q')\in\mathcal{V}$. Hence,
$g^{-k}(q')\in\mathcal{V}\cap\mathcal{C}(p,g)$ and the proof is completed.
\end{proof}

\begin{Corollary}\label{Corollary:Robustness of accesibility by open sets}
Let $f\in Diff^{r}(M)$ 
be a partially hyperbolic accessible diffeomorphism exhibiting Property SH. Then 
there exists a $C^{1}$ neighbourhood of $f$, $\mathcal{U}=\mathcal{U}(f)$, such that for 
any $g\in\mathcal{U}$ it follows that $g$ is accessible in  relation to open sets.
\end{Corollary}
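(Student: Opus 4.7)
The plan is to observe that this corollary is essentially immediate from Theorem~\ref{Theorem:Robustness of density of accessibility classes}, so the proof reduces to unpacking the definition of accessibility in relation to open sets.

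First I would take as $\mathcal{U}(f)$ exactly the $C^{1}$ neighbourhood produced by Theorem~\ref{Theorem:Robustness of density of accessibility classes}. That theorem already guarantees that for every $g\in\mathcal{U}$ and every $p\in M$, the accessibility class $\mathcal{C}(p,g)$ is dense in $M$. So the bulk of the work is hidden behind that theorem (which in turn relies on Proposition~\ref{Proposition:Robustly connecting to the center unstable} and the robustness of Property SH, Theorem~\ref{Theorem:SH is robust}).

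Next I would fix an arbitrary $g\in\mathcal{U}$ and show $g$ is accessible in relation to open sets. Given two open sets $P,Q\subseteq M$, I choose any $p\in P$ and invoke the density of $\mathcal{C}(p,g)$ in $M$ to locate a point $q\in Q\cap\mathcal{C}(p,g)$. By definition of accessibility class, $p$ and $q$ are accessible by a $us$-path for $g$; hence $P$ and $Q$ are accessible in the sense of Definition~\ref{Definition:Accesible pairs of open sets}. Since $P,Q$ were arbitrary, $g$ is accessible in relation to open sets.

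There is no real obstacle here, as the content of the statement has already been absorbed into Theorem~\ref{Theorem:Robustness of density of accessibility classes}; the corollary is the trivial translation from ``every accessibility class is dense'' to ``every two open sets are accessible.'' If one wanted, one could note the converse direction (accessibility in relation to open sets gives only a residual set of points with dense accessibility class, as mentioned in the Remark following Definition~\ref{Definition:Accesible pairs of open sets}), so the theorem is strictly stronger than the corollary and the deduction really is one line.
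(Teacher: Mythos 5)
Your proposal is correct and matches the paper's intent: the paper states this corollary without proof precisely because it follows immediately from Theorem~\ref{Theorem:Robustness of density of accessibility classes} by the one-line argument you give (take $p\in P$, use density of $\mathcal{C}(p,g)$ to find $q\in Q\cap\mathcal{C}(p,g)$). Nothing is missing.
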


\begin{Corollary}\label{Corollary:Generalization of the first result}
Let $f\in Diff^{r}(M)$  be a partially hyperbolic accessible diffeomorphism 
exhibiting Property SH and such that $\Omega(f)=M$. Then any diffeomorphism $g$ being
$C^{1}$-close to $f$  and such that $\Omega(g)=M$ is topologically mixing.
\end{Corollary}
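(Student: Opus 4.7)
The plan is to realize this corollary as a routine combination of the robustness results already established. I would set the desired $C^{1}$-neighborhood of $f$ as the intersection of three neighborhoods:

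\begin{enumerate}
\item By Lemma~\ref{Lemma:Partial hyperbolicity is open}, partial hyperbolicity is a $C^{1}$-open condition, so there is a neighborhood $\mathcal{U}_{1}$ of $f$ consisting of partially hyperbolic diffeomorphisms.
\item By Theorem~\ref{Theorem:SH is robust}, Property SH persists under $C^{1}$-perturbations, so there is a neighborhood $\mathcal{U}_{2}$ of $f$ on which every diffeomorphism exhibits Property SH.
\item By Corollary~\ref{Corollary:Robustness of accesibility by open sets}, since $f$ is partially hyperbolic, accessible, and exhibits Property SH, there is a neighborhood $\mathcal{U}_{3}$ of $f$ on which every diffeomorphism is accessible in relation to open sets.
\end{enumerate}

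Setting $\mathcal{U}=\mathcal{U}_{1}\cap\mathcal{U}_{2}\cap\mathcal{U}_{3}$, every $g\in\mathcal{U}$ is simultaneously partially hyperbolic, exhibits Property SH, and is accessible in relation to open sets. Now given the extra assumption $\Omega(g)=M$, the hypotheses of Corollary~\ref{Corollary:Observacion 17 del Referee} are met verbatim, so $g$ is topologically mixing.

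There is no real obstacle here: the corollary is essentially a bookkeeping consequence that packages the three robustness results together with the topological-mixing criterion from the previous section. The only mild point to be careful about is that Corollary~\ref{Corollary:Robustness of accesibility by open sets} already requires $f$ itself to be accessible (not merely accessible in relation to open sets), which is part of the hypotheses, so the chain of implications goes through without further adjustment. One could also remark that the condition $\Omega(g)=M$ is not automatic from the hypothesis on $f$ (it must be assumed for $g$ directly, as in the statement), which is why the condition appears in the conclusion rather than being inherited from $f$.
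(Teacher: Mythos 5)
Your proposal is correct and follows exactly the chain the paper intends: shrink to a neighbourhood where partial hyperbolicity, Property SH (Theorem~\ref{Theorem:SH is robust}) and accessibility in relation to open sets (Corollary~\ref{Corollary:Robustness of accesibility by open sets}) all persist, then invoke Corollary~\ref{Corollary:Observacion 17 del Referee} using the assumed hypothesis $\Omega(g)=M$. The paper leaves this as an unproved corollary precisely because it is this bookkeeping argument, so nothing further is needed.
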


\begin{Corollary}\label{Corollary:First result}
Let $f\in Diff^{r}(M)$  be a partially hyperbolic, accessible, volume preserving diffeomorphism
exhibiting the Property SH, then any diffeomorphism $C^{1}$-close to $f$ that is volume preserving
is topologically mixing.
\end{Corollary}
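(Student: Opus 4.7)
The plan is to deduce this final Corollary directly from Corollary~\ref{Corollary:Generalization of the first result}, with the only extra ingredient being the Poincar\'e Recurrence Theorem to upgrade volume preservation into the non-wandering hypothesis $\Omega(\cdot)=M$.

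First I would observe that since $M$ is compact Riemannian and $f$ preserves a volume (a smooth measure equivalent to Lebesgue, which is finite on $M$), the Poincar\'e Recurrence Theorem implies that the set of recurrent points of $f$ has full measure, hence is dense in $M$. Because recurrent points are non-wandering and $\Omega(f)$ is closed, this forces $\Omega(f)=M$. Hence $f$ itself satisfies the hypotheses of Corollary~\ref{Corollary:Generalization of the first result}: it is partially hyperbolic, accessible, exhibits Property SH, and has full non-wandering set.

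Next, let $g$ be any volume-preserving diffeomorphism $C^{1}$-close to $f$. The key remark is that the same Poincar\'e argument applies verbatim to $g$: $g$ preserves the same (or a nearby) finite volume form on the compact manifold $M$, so $\Omega(g)=M$ as well. Thus $g$ falls within the hypothesis of Corollary~\ref{Corollary:Generalization of the first result} (any diffeomorphism $C^{1}$-close to $f$ with $\Omega(g)=M$), and we conclude that $g$ is topologically mixing.

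I do not foresee a hard step here: once Corollary~\ref{Corollary:Generalization of the first result} is established, the only nontrivial ingredient is the standard measure-theoretic fact that volume preservation on a compact manifold yields $\Omega=M$. The potential pitfall is a bookkeeping one, namely checking that the $C^{1}$ neighbourhood produced by Corollary~\ref{Corollary:Generalization of the first result} is the same one in which one takes the volume-preserving perturbation $g$; but this is immediate since the hypothesis on $g$ (being volume-preserving and $C^{1}$-close to $f$) is strictly stronger than the hypothesis required there.
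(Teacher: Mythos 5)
Your proposal is correct and is exactly the intended derivation: the paper states this corollary without proof immediately after Corollary~\ref{Corollary:Generalization of the first result}, the implicit argument being that volume preservation on the compact manifold $M$ yields $\Omega(f)=M$ and $\Omega(g)=M$ via Poincar\'e recurrence (recurrent points are dense and $\Omega$ is closed), so the previous corollary applies. No gap.
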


\section{Property SH and Robust Transitivity}
\label{sec:Property SH and Robust Transitivity}
\noindent Unlike the results in preceding section our next Theorem 
do not have in the hypotheses the condition of Accessibility.
Property SH is enough  to guarantee robust transitivity.

\begin{Lemma}\label{Lemma:Consequence of the Stable Manifold Theorem}
Let $f\in Diff^{r}(M)$ be a partially hyperbolic diffeomorphism. There exist 
$\epsilon > 0$ such that given $r > 0$ there are $\delta> 0$ and a neighbourhood $\mathcal{V}_{0}$
of $f$ such that for any $x,y\in M$ with $d(x,y)< \delta$ it follows that
\begin{align*}
\bullet&\hspace{0.5 cm}\mathcal{F}^{ss}_{\epsilon}(x,g)\pitchfork\mathcal{W}^{cu}_{r}(y,g)\neq\emptyset\\
\bullet\bullet&\hspace{0.5 cm}\mathcal{F}^{uu}_{\epsilon}(x,g)\pitchfork\mathcal{W}^{cs}_{r}(y,g)\neq\emptyset,\quad\text{for any}\quad g\in\mathcal{V}_{0}.
\end{align*}
  
\end{Lemma}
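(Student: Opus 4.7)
The plan is to mimic the strategy of Lemma~\ref{Lemma:Robust transversal intersection}, but now producing transverse intersections simultaneously with the center-unstable and the center-stable plaques. First I would extract, from the Stable Manifold Theorem applied to $f$ and to all $g$ in some $C^{1}$ neighborhood $\mathcal{U}_{1}$ of $f$, a uniform size $\epsilon > 0$ of local strong stable and strong unstable leaves. To make the second intersection meaningful I would also apply Lemma~\ref{Lemma:Inclusion of backward iteration} to $f^{-1}$ (whose center-unstable bundle is $E^{cs}$) to obtain a continuous family $(y,g)\mapsto W^{cs}_{r}(y,g)$ of locally invariant center-stable plaques depending continuously on $(y,g)\in M\times\mathcal{U}_{1}$.

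With $\epsilon$ in hand, fix $r > 0$. At each $x\in M$, the splitting $T_{x}M = E^{ss}(x,f)\oplus E^{cu}(x,f)$ ensures that $\mathcal{F}^{ss}_{\epsilon}(x,f)$ meets $W^{cu}_{r}(x,f)$ transversally at $x$; symmetrically $\mathcal{F}^{uu}_{\epsilon}(x,f)$ meets $W^{cs}_{r}(x,f)$ transversally at $x$. Since transverse intersection of $C^{1}$ submanifolds is $C^{1}$-open, combining this pointwise transversality with the continuous dependence of the strong foliations (Theorem~\ref{Theorem:Continous dependence of foliations}) and of the center plaques (Lemma~\ref{Lemma:Inclusion of backward iteration}) on the base point and on $g$, I obtain, for each $x$, a radius $r_{x} > 0$ and a neighborhood $\mathcal{V}_{x}\subset\mathcal{U}_{1}$ of $f$ such that whenever $d(x,y)<r_{x}$ and $g\in\mathcal{V}_{x}$, both intersections $\mathcal{F}^{ss}_{\epsilon}(x,g)\pitchfork W^{cu}_{r}(y,g)$ and $\mathcal{F}^{uu}_{\epsilon}(x,g)\pitchfork W^{cs}_{r}(y,g)$ are nonempty.

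Finally I would appeal to compactness of $M$: cover $M$ by finitely many balls $B(x_{i},r_{x_{i}}/2)$ for $i=1,\ldots,N$, let $\delta$ be a Lebesgue number of the cover $\{B(x_{i},r_{x_{i}})\}$, and set $\mathcal{V}_{0} = \bigcap_{i=1}^{N}\mathcal{V}_{x_{i}}$. If $d(x,y)<\delta$ then $x,y$ both lie in some common $B(x_{i},r_{x_{i}})$, and the previous step applied with reference point $x_{i}$ (together with continuous dependence of the leaves and plaques on the base point within $B(x_{i},r_{x_{i}})$) yields the two nonempty transverse intersections for every $g\in\mathcal{V}_{0}$.

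The step I expect to be slightly delicate is the $cs$-intersection, because unlike $\mathcal{F}^{ss}(g)$ the family $W^{cs}(g)$ is not a genuine foliation but only a continuous family of locally invariant plaques. Invoking Lemma~\ref{Lemma:Inclusion of backward iteration} for $f^{-1}$ supplies precisely the $C^{1}$-continuous dependence of $W^{cs}_{r}(y,g)$ on $(y,g)$ needed to run the same transversality-openness argument as in the $cu$-case, and once this is established both bullets fall out of the common compactness reduction described above.
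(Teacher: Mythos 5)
Your argument is correct and is essentially the approach the paper intends: its own proof is the one-line ``follows from the Stable Manifold Theorem,'' and the details you supply (uniform $\epsilon$ robust in $g$, pointwise transversality of $\mathcal{F}^{ss}_{\epsilon}$ with $W^{cu}_{r}$ and of $\mathcal{F}^{uu}_{\epsilon}$ with $W^{cs}_{r}$ from the splitting, openness of transversality, then a finite cover and Lebesgue number to get a uniform $\delta$ and $\mathcal{V}_{0}$) are exactly the scheme already used in Lemmas~\ref{Lemma:Robust uniform continuity of foliations} and~\ref{Lemma:Robust transversal intersection}. Your identification of $W^{cs}(\cdot,g)$ with the $W^{cu}$ plaques of $g^{-1}$ via Lemma~\ref{Lemma:Inclusion of backward iteration} is the right way to handle the second bullet; just make sure the local transversality statement is formulated as open in both base points $(x,y)$ near the diagonal simultaneously, so the final Lebesgue-number step goes through without further comment.
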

\begin{proof}
 The result follows from Stable Manifold Theorem.
\end{proof}

\begin{Theorem}\label{Theorem:Third result}
Let $M$ be a compact Riemannian manifold and let  $f\in Diff^{r}(M)$ be a partially hyperbolic diffeomorphism,
non-hyperbolic, transitive. If $f$ and $f^{-1}$ satisfy Property SH then $f$ is robustly transitive.
\end{Theorem}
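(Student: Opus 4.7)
The plan is to fix a $C^{1}$-neighbourhood $\mathcal{U}$ of $f$ in which the hypotheses are preserved with uniform constants, and then show for every $g\in\mathcal{U}$ and any pair of open sets $U,V\subset M$ that $g^{n}(U)\cap V\neq\emptyset$ for some $n\in\mathbb{Z}$. Such a neighbourhood is obtained by combining Lemma~\ref{Lemma:Partial hyperbolicity is open}, Theorem~\ref{Theorem:Continous dependence of foliations} and Theorem~\ref{Theorem:SH is robust} applied both to $f$ and to $f^{-1}$, so that every $g\in\mathcal{U}$ is partially hyperbolic and both $g$ and $g^{-1}$ satisfy Property SH.

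For any $g\in\mathcal{U}$ I would apply Proposition~\ref{Proposition:Propiedad 1} twice. Applied to $g$, it associates to each centre-unstable disc $D^{cu}$ a hyperbolic periodic point $p=p(D^{cu},g)$ of stable dimension $\dim E^{ss}$ whose strong stable manifold meets $D^{cu}$ transversally. Applied to $g^{-1}$, it associates to each centre-stable disc $D^{cs}$ a hyperbolic periodic point $q=q(D^{cs},g)$ of unstable dimension $\dim E^{uu}$ whose strong unstable manifold meets $D^{cs}$ transversally. By Remark~\ref{Remark:Puntos periodicos uniformes}, the central direction is uniformly expanding along the orbit of $p$ and uniformly contracting along the orbit of $q$, so that as immersed submanifolds $W^{u}(p,g)$ has dimension $\dim E^{c}+\dim E^{uu}$ and $W^{s}(q,g)$ has dimension $\dim E^{ss}+\dim E^{c}$.

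Given open sets $U,V$, I would place a small centre-stable disc $D^{cs}\subset U$ (obtained from the locally invariant family analogous to that of Lemma~\ref{Lemma:Inclusion of backward iteration}) and a small centre-unstable disc $D^{cu}\subset V$, and invoke the previous paragraph to get saddles $p$ and $q$. The transverse crossings $W^{ss}(p,g)\pitchfork D^{cu}$ and $W^{uu}(q,g)\pitchfork D^{cs}$ feed the inclination lemma: forward iterates of $D^{cu}$ accumulate in the $C^{1}$ topology on any prescribed compact piece of $W^{u}(p,g)$, and backward iterates of $D^{cs}$ accumulate on any prescribed compact piece of $W^{s}(q,g)$. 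Consequently the problem reduces to producing a single point in $W^{u}(p,g)\cap W^{s}(q,g)$: a neighbourhood of such a point inside either manifold is eventually captured by $g^{n}(D^{cu})$ and by $g^{-m}(D^{cs})$ for some large $n,m$, forcing $g^{n+m}(V)\cap U\neq\emptyset$ and hence transitivity of $g$.

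The main obstacle is the existence and robustness of this heterodimensional intersection $W^{u}(p,g)\cap W^{s}(q,g)$. For $g=f$ I would extract it from the hypothesis that $f$ is transitive, together with the dimension count $\dim W^{u}(p,f)+\dim W^{s}(q,f)=\dim M+\dim E^{c}>\dim M$: a dense orbit of $f$ visits arbitrarily small neighbourhoods of $p$ and $q$ at times $t_{1}<t_{2}$, and an inclination-lemma argument around these recurrences produces a genuine intersection point whose intersection locus is $\dim E^{c}$-dimensional along the centre and transverse in the strong stable/unstable complement. The transversality in this complement is what makes the intersection persist under $C^{1}$ perturbations, via the Stable Manifold Theorem and the continuous dependence of foliations (Theorem~\ref{Theorem:Continous dependence of foliations}); the conclusion of the previous paragraph then yields transitivity for every $g\in\mathcal{U}$.
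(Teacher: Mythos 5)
Your setup (a uniform neighbourhood where partial hyperbolicity and Property SH for both $g$ and $g^{-1}$ persist, Proposition~\ref{Proposition:Propiedad 1} applied to $g$ and to $g^{-1}$, and the inclination lemma to push $D^{cu}\subset V$ onto $W^{u}(p,g)$ and $D^{cs}\subset U$ onto $W^{s}(q,g)$) is coherent up to the point where everything hinges on producing a point of $W^{u}(p,g)\cap W^{s}(q,g)$. That step is a genuine gap, and it is the whole difficulty. Transitivity of $f$ plus the dimension count $\dim W^{u}(p)+\dim W^{s}(q)=\dim M+\dim E^{c}$ does not yield an intersection: a dense orbit visiting small neighbourhoods of $p$ and then of $q$ gives you a single orbit, not a disc of dimension $\dim W^{u}(p)$ transverse to $W^{s}(p)$, so the inclination lemma has nothing to act on; to thicken that orbit point into a centre-unstable disc with controlled backward contraction you would need precisely the SH-type estimates, at which point you are rebuilding the paper's argument rather than invoking a soft recurrence fact. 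Worse, in your scheme $p$ and $q$ are not a fixed pair of saddles: they are produced by Proposition~\ref{Proposition:Propiedad 1} separately for each $g$ and each choice of discs $D^{cu}\subset V$, $D^{cs}\subset U$, so there is no single heterodimensional intersection established for $f$ that you could then claim persists under $C^{1}$ perturbation; the "robustness via transversality in the strong complement" paragraph presupposes an object whose existence has not been shown even for $f$ itself.

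The paper avoids this entirely and never uses periodic points or the inclination lemma. Its proof fixes a point $z$ with dense forward and backward $f$-orbit, covers $M$ by finitely many balls $B(f^{n_{i}}(z),\delta/2)$, and records finitely many return times $m_{i}$ with gaps at least $m_{0}$; the neighbourhood $\mathcal{V}$ is then shrunk so that $g^{n}$ is $C^{0}$-close to $f^{n}$ for the finitely many $|n|\le l_{0}$ involved. Property SH for $g$ and for $g^{-1}$ produces discs $W^{cu}_{r}(\bar y,g)\subset g^{k'}(\mathcal{U})$ and $W^{cs}_{r}(\bar x,g)\subset g^{-k'}(\mathcal{W})$ together with the crucial saturation statements (items (ii) and (iv)): every strong unstable leaf of size $\epsilon$ through the first disc stays in $g^{k'}(\mathcal{U})$, and dually for the second. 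The dense $f$-orbit of $z$, $C^{0}$-shadowed by $g$ over the finitely many relevant iterates, is then used as a bridge: a strong stable leaf through $g^{m_{i}}(z)$ cuts $W^{cu}_{r}(\bar y,g)$, its image after $m_{j}-m_{i}$ iterates lands $\delta$-close to $\bar x$, and Lemma~\ref{Lemma:Consequence of the Stable Manifold Theorem} closes the chain. If you want to salvage your architecture you would need to supply an actual mechanism (shadowing in the centre direction, a blender, or the paper's saturated-disc bridge) for the intersection $W^{u}(p,g)\cap W^{s}(q,g)$; as written the proof does not go through.
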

\begin{proof}
For the sake of clarity we divide the proof in two steps. The first step
deals with the construction of an appropiate neighborhood $\mathcal{V}$ of $f$. 
In the second step we prove that any diffeomorphism in $\mathcal{V}$ is transitive.\\
\textbf{Step 1}\\
From Theorem~\ref{Theorem:SH is robust} there exist a neighborhood  $\mathcal{V}_{1}(f)$, $C_{0}>0$ and 
$\sigma_{0}>1$ such that for every $g\in\mathcal{V}_{1}$ and $x\in M$ there exists a point 
$y\in\mathcal{F}^{uu}_{1}(x,g)$ such that 
\begin{equation}\label{Cuarta}
m\lbrace Dg^{n}_{\mid E^{c}(g^{l}(y))}\rbrace> C_{0}\sigma^{n}_{0}\hspace{0.5 cm}\text{for any}\hspace{0.5 cm} n>0,\hspace{0.5 cm} l>0.
\end{equation}
Analogously there exist a neighborhood $\mathcal{V}_{2}(f^{-1})$, $C_{1}>0$ and 
$\sigma_{1}>1$ such that for every $h\in\mathcal{V}_{2}$ and $x\in M$ there exists a point 
$y\in\mathcal{F}^{uu}_{1}(x,h)$ such that
\[
m\lbrace Dh^{n}_{\mid E^{c}(h^{l}(y))}\rbrace> C_{1}\sigma^{n}_{1}\hspace{0.5 cm}\text{for any}\hspace{0.5 cm} n>0,\hspace{0.5 cm} l>0.
\]
Take $C=\min \{C_{0},C_{1}\} >0$ and $\sigma =\min \{\sigma_{0},\sigma_{1}\} >1$.
Thus, for every $g\in\mathcal{V}_{1}\cup\mathcal{V}_{2}$ and $x\in M$ there exists a point 
$y\in\mathcal{F}^{uu}_{1}(x,g)$ such that
\[
m\lbrace Dg^{n}_{\mid E^{c}(g^{l}(y))}\rbrace> C\sigma^{n}\hspace{0.5 cm}\text{for any}\hspace{0.5 cm} n>0,\hspace{0.5 cm} l>0.
\]
We may assume that $C=1$. Otherwise we take a fixed power of every $g\in\mathcal{V}_{1}\cup\mathcal{V}_{2}$.
Let $\mathcal{V}_{3}(f)\subset\mathcal{V}_{1}$ be a neighborhood of $f$ such that if $g\in\mathcal{V}_{3}$ then 
$g^{-1}\in\mathcal{V}_{2}$. Let $\lambda=\sigma^{-1}$, fix $0<\lambda <\lambda_{1} < 1$ and let $r >0$ be as
in Lemma~\ref{Lemma:Backward iterations contract}.
Consider $\epsilon >0$ given by Stable Manifold Theorem and let $r > 0$ be as above. Take $\delta > 0$
and take $\mathcal{V}_{4}(f)\subset\mathcal{V}_{3}$ a neighborhood of $f$ as in 
Lemma~\ref{Lemma:Consequence of the Stable Manifold Theorem}.
Since $f$ is transitive there exists a point $z\in M$ such that $\{ f^{n}(z); n\in\mathbb{N}\}$ and
$\{ f^{-n}(z); n\in\mathbb{N}\}$ are dense in $M$. Therefore 
\[
M=\underset{n\in \mathbb{N}}{\bigcup} B(f^{n}(z),\frac{\delta}{2})
\]
and by compactness there exist positive integers $n_{1} <\dots < n_{l}$ such that 

\[
\overset{l}{\underset{i=1}{\bigcup}} B(f^{n_{i}}(z),\frac{\delta}{2})=M.
\]
Next, choose a positive integer $m_{0}$ and a neighborhood $\mathcal{V}_{5}(f)\subset\mathcal{V}_{4}$
such that if $m\ge m_{0}$, $g\in \mathcal{V}_{5}$ and $q \in M$ then
\begin{align*}
\bullet&\hspace{0.5 cm} g^{m}(\mathcal{F}^{ss}_{\epsilon}(q,g))\subset B(g^{m}(q),\frac{\delta}{6})\\
\bullet&\hspace{0.5 cm} g^{-m}(\mathcal{F}^{uu}_{\epsilon}(q,g))\subset B(g^{-m}(q),\frac{\delta}{6})
\end{align*}

\begin{Affirmation}
For each $i=2,\dots ,l$ there exists $m_{i}\in\mathbb{Z}^{\ast}_{+}$ satisfying:\\

$\text{(i)}\hspace{0.3 cm}f^{m_{i}}(z)\in B(f^{n_{i}}(z),\frac{\delta}{6})\hspace{0.4 cm}\text{for}\hspace{0.4 cm} i=2,\dots,l$\\

$\text{(ii)}\hspace{0.3 cm}m_{2} > n_{1}+m_{0}$\\
$\phantom{\hspace{1.2 cm}}  m_{i} > m_{i-1}+m_{0}\hspace{0.4 cm}\text{for}\hspace{0.4 cm} i=3,\dots, l$
\end{Affirmation}
\begin{proof}
It follows by density of $\{f^{n}(z); n\in\mathbb{N}\}$ in $M$.
\end{proof}
\begin{Affirmation}
For each $i=2,\dots ,l$ there exists $\overset{\_\_\_\_}{m_{i}}\in\mathbb{Z}^{\ast}_{-}$ satisfying:\\

$\text{(iii)}\hspace{0.3 cm}f^{\overset{\_\_\_\_}{m_{i}}}(z)\in B(f^{n_{i}}(z),\frac{\delta}{6})\hspace{0.4 cm}\text{for}\hspace{0.4 cm} i=2,\dots,l$\\

$\text{(iv)}\hspace{0.3 cm}\overset{\_\_\_\_}{m_{2}} < n_{1}-m_{0}$\\
$\phantom{\hspace{1.3 cm}}  \overset{\_\_\_\_}{m_{i}} < \overset{\_\_\_\_\_\_\_\_\_}{m_{i-1}}-m_{0}\hspace{0.4 cm}\text{for}\hspace{0.4 cm} i=3,\dots, l$
\end{Affirmation}
\begin{proof}
It follows by density of $\{f^{-n}(z); n\in\mathbb{N}\}$ in $M$.
\end{proof}
\noindent Set $l_{0}=\max \{n_{l},m_{2},m_{3},m_{4},\dots ,m_{l},-\overset{\_\_\_\_}{m_{2}},-\overset{\_\_\_\_}{m_{3}},\dots,-\overset{\_\_\_\_}{m_{l}}\}$.\\
Observe that $l_{0}\ge n_{l} > n_{i}$ for $i=1,\dots, l-1$.\\
Take a neighborhood $\mathcal{V}(f)\subset\mathcal{V}_{5}$ such that $d_{C^{0}}(g^{n},f^{n}) < \frac{\delta}{6}$, 
for any $n\in\mathbb{Z}$ with $\lvert n\rvert \le l_{0}$, for any $g\in\mathcal{V}$.\\
\textbf{Step 2}\\
We will prove that any $g\in\mathcal{V}$ is transitive.
Take two arbitrary open sets $\mathcal{U},\mathcal{W}\subset M$.
Let us prove that there exists a positive integer
$k_{0}$ such that $g^{k_{0}}(\mathcal{U})\cap\mathcal{W}\neq \varnothing$.
Let $u\in\mathcal{U}$ and $w\in\mathcal{W}$. Let $\beta > 0$ be such that
$\mathcal{F}^{uu}_{\beta}(u,g)\subset\mathcal{U}$ and
$\mathcal{F}^{uu}_{\beta}(w,g^{-1})\subset\mathcal{W}$.
Take $n_{0}$ such that
$g^{n_{0}}(\mathcal{F}^{uu}_{\frac{\beta}{2}}(u,g))\supset\mathcal{F}^{uu}_{1}(g^{n_{0}}(u),g)$
and
$g^{-n_{0}}(\mathcal{F}^{uu}_{\frac{\beta}{2}}(w,g^{-1}))\supset\mathcal{F}^{uu}_{1}(g^{-n_{0}}(w),g^{-1})$.
Consider
$y\in\mathcal{F}^{uu}_{1}(g^{n_{0}}(u),g)$ and $x\in\mathcal{F}^{uu}_{1}(g^{-n_{0}}(w),g^{-1})$
satisfying:

\begin{equation}\label{Primeradoble}
\left\{\begin{aligned}
m&\{Dg^{n}_{\mid E^{c}}(g^{l}(y))\} > \sigma^{n}\hspace{1.25 cm} \text{for any} \qquad n>0,\quad l>0\\
m&\{Dg^{-n}_{\mid E^{c}}(g^{-l}(x))\} > \sigma^{n}\hspace{1.0 cm} \text{for any} \qquad n>0,\quad l>0.
\end{aligned}
\right.
\end{equation}
Observe that
\begin{align*}
\bullet&\hspace{0.2 cm} \mathcal{F}^{uu}_{\frac{\beta}{2}}(g^{-n_{0}}(y),g)\subset\mathcal{U}
\hspace{0.55 cm}\text{because}\hspace{0.1 cm} g^{-n_{0}}(y)
\in\mathcal{F}^{uu}_{\frac{\beta}{2}}(u,g)\subset\mathcal{F}^{uu}_{\beta}(u,g)\subset\mathcal{U}\\
\bullet&\hspace{0.2 cm} \mathcal{F}^{uu}_{\frac{\beta}{2}}(g^{n_{0}}(x),g^{-1})\subset\mathcal{W}
\hspace{0.2 cm}\text{because}\hspace{0.1 cm} g^{n_{0}}(x)
\in\mathcal{F}^{uu}_{\frac{\beta}{2}}(w,g^{-1})\subset\mathcal{F}^{uu}_{\beta}(w,g^{-1})\subset\mathcal{W}.
\end{align*}
Thus, there exist $A\subset\mathcal{U}$ a neighborhood of $g^{-n_{0}}(y)$ and $B\subset\mathcal{W}$
a neighborhood of $g^{n_{0}}(x)$ such that

\begin{equation}\label{Segundadoble}
\left\{\begin{aligned}
\hspace{0.1 cm}&\mathcal{F}^{uu}_{\frac{\beta}{2}}(a,g)\subset\mathcal{U}\hspace{1.5 cm} 
\text{for any}\hspace{0.5 cm} a\in A\\
\hspace{0.1 cm}&\mathcal{F}^{uu}_{\frac{\beta}{2}}(b,g^{-1})\subset\mathcal{W}\hspace{1.0 cm} 
\text{for any}\hspace{0.5 cm} b\in B.
\end{aligned}
\right.
\end{equation}

Let $\eta >0$ be such that

\begin{equation}\label{Terceradoble}
\left\{\begin{aligned}
\hspace{0.1 cm}&g^{-n_{0}}(\mathcal{W}^{cu}_{\eta}(y,g))\subset A\subset\mathcal{U}\\
\hspace{0.1 cm}&g^{n_{0}}(\mathcal{W}^{cu}_{\eta}(x,g^{-1}))\subset B\subset\mathcal{W}
\end{aligned}
\right.
\end{equation}
Next, choose a positive integer $m'$ such that $\lambda^{m'}_{1}r < \eta$ and

\begin{equation}\label{Cuartadoble}
\left\{\begin{aligned}
\hspace{0.1 cm}&g^{m'+n_{0}}(\mathcal{F}^{uu}_{\frac{\beta}{2}}(q,g))\supset
\mathcal{F}^{uu}_{\epsilon}(g^{m'+n_{0}}(q),g)\hspace{0.4 cm}\text{for any}\hspace{0.4 cm} q\in M\\
\hspace{0.1 cm}&g^{-(m'+n_{0})}(\mathcal{F}^{uu}_{\frac{\beta}{2}}(q,g^{-1}))\supset
\mathcal{F}^{uu}_{\epsilon}(g^{-(m'+n_{0})}(q),g^{-1})\hspace{0.4 cm}\text{for any}\hspace{0.4 cm} q\in M
\end{aligned}
\right.
\end{equation}

Set $k'=n_{0}+m'$. Thus, using \eqref{Primeradoble}, we get

\begin{align*}
\hspace{0.1 cm}&\prod_{j=0}^{n-1}\parallel Dg^{-1}_{\mid_{E^{c}(g^{-j}(y)})}
\parallel<\lambda^{n},\hspace{1 cm} 0\leq n\leq m'\\
\hspace{0.1 cm}&\prod_{j=0}^{n-1}\parallel Dg_{\mid_{E^{c}(g^{j}(x)})}
\parallel<\lambda^{n},\hspace{1 cm} 0\leq n\leq m'
\end{align*}

and therefore

\begin{align*}
\hspace{0.1 cm}&\prod_{j=0}^{n-1}\parallel Dg^{-1}_{\mid_{E^{cu}(g^{-j}(y)})}
\parallel<\lambda^{n},\hspace{1 cm} 0\leq n\leq m'\\
\hspace{0.1 cm}&\prod_{j=0}^{n-1}\parallel Dg_{\mid_{E^{cu}(g^{j}(x)})}
\parallel<\lambda^{n},\hspace{1 cm} 0\leq n\leq m'
\end{align*}

From   Lemma~\ref{Lemma:Backward iterations contract} we conclude that

\begin{equation}\label{Quintadoble}
\left\{\begin{aligned}
\hspace{0.1 cm}&g^{-m'}(\mathcal{W}^{cu}_{r}(g^{m'}(y),g))\subset
\mathcal{W}^{cu}_{\lambda^{m'}_{1}r}(y,g)\subset
\mathcal{W}^{cu}_{\eta}(y,g)\\
\hspace{0.1 cm}&g^{m'}(\mathcal{W}^{cu}_{r}(g^{-m'}(x),g^{-1}))\subset
\mathcal{W}^{cu}_{\lambda^{m'}_{1}r}(x,g^{-1})\subset
\mathcal{W}^{cu}_{\eta}(x,g^{-1})
\end{aligned}
\right.
\end{equation}

and hence, using \eqref{Terceradoble}, we have

\begin{equation}\label{Sextadoble}
\left\{\begin{aligned}
\hspace{0.1 cm}&g^{-k'}(\mathcal{W}^{cu}_{r}(g^{m'}(y),g))\subset A\subset\mathcal{U}\\
\hspace{0.1 cm}&g^{k'}(\mathcal{W}^{cu}_{r}(g^{-m'}(x),g^{-1}))\subset B\subset\mathcal{W}.
\end{aligned}
\right.
\end{equation}
Particularly, it follows

\begin{equation}\label{Septimadoble}
\left\{\begin{aligned}
\hspace{0.1 cm}&\mathcal{W}^{cu}_{r}(g^{m'}(y),g)\subset g^{k'}(\mathcal{U})\\
\hspace{0.1 cm}&\mathcal{W}^{cu}_{r}(g^{-m'}(x),g^{-1})\subset g^{-k'}(\mathcal{W}).
\end{aligned}
\right.
\end{equation}
Moreover, if $p\in\mathcal{W}^{cu}_{r}(g^{m'}(y),g),\hspace{0.1 cm} q\in\mathcal{W}^{cu}_{r}(g^{-m'}(x),g^{-1})$
then $g^{-k'}(p)\in A$ and $g^{k'}(q)\in B$ due to \eqref{Sextadoble}.
Thus, from \eqref{Segundadoble},

\[
 \mathcal{F}^{uu}_{\frac{\beta}{2}}(g^{-k'}(p),g)\subset\mathcal{U}
\hspace{0.5 cm}\text{and}\hspace{0.5 cm}\mathcal{F}^{uu}_{\frac{\beta}{2}}(g^{k'}(q),g^{-1})\subset\mathcal{W}
\]
 
and hence,  \eqref{Cuartadoble} imply

\begin{equation}\label{Octavadoble}
\left\{\begin{aligned}
\hspace{0.1 cm}&\mathcal{F}^{uu}_{\epsilon}(p,g)\subset
g^{k'}(\mathcal{F}^{uu}_{\frac{\beta}{2}}(g^{-k'}(p),g))\subset g^{k'}(\mathcal{U})\\
\hspace{0.1 cm}&\mathcal{F}^{uu}_{\epsilon}(q,g^{-1})\subset
g^{-k'}(\mathcal{F}^{uu}_{\frac{\beta}{2}}(g^{k'}(q),g^{-1}))\subset g^{-k'}(\mathcal{W}).
\end{aligned}
\right.
\end{equation}
Finally, from \eqref{Septimadoble} and \eqref{Octavadoble} we conclude that

\begin{align*}
(i)&\hspace{0.2 cm}\mathcal{W}^{cu}_{r}(g^{m'}(y),g)\subset g^{k'}(\mathcal{U})\\
(ii)&\hspace{0.2 cm}\mathcal{F}^{uu}_{\epsilon}(p,g)\subset g^{k'}(\mathcal{U}),
\hspace{0.1 cm}\forall p\in\mathcal{W}^{cu}_{r}(g^{m'}(y),g)\\
(iii)&\hspace{0.2 cm}\mathcal{W}^{cs}_{r}(g^{-m'}(x),g)\subset g^{-k'}(\mathcal{W})\\
(iv)&\hspace{0.2 cm}\mathcal{F}^{ss}_{\epsilon}(q,g)\subset g^{-k'}(\mathcal{W}),
\hspace{0.1 cm}\forall q\in\mathcal{W}^{cs}_{r}(g^{-m'}(x),g)
\end{align*}
For the sake of simplicity, we will denote $g^{m'}(y)$ for $\overset{\_}{y}$
and $g^{-m'}(x)$ for $\overset{\_}{x}$.\\
\noindent Since $M=\overset{l}{\underset{i=1}{\cup}}B(f^{n_{i}}(z),\frac{\delta}{2})$, 
there are $i,j\in\{1,\dots,l\}$ such that
\[
\overset{\_}{y}\in B(f^{n_{i}}(z),\frac{\delta}{2})\hspace{0.2 cm}\text{and}\hspace{0.2 cm}
\overset{\_}{x}\in B(f^{n_{j}}(z),\frac{\delta}{2}).
\] 
\textbullet\hspace{0.3 cm} Case $i=j$

In this case, $d(\overset{\_}{x},\overset{\_}{y})<\delta$. Thus, using 
Lemma~\ref{Lemma:Consequence of the Stable Manifold Theorem}, 
$\mathcal{F}^{uu}_{\epsilon}(\overset{\_}{y},g)\pitchfork W^{cs}_{r}(\overset{\_}{x},g)\neq\varnothing$.
Moreover, by $(ii)$ and by $(iii)$, we have that 
$\mathcal{F}^{uu}_{\epsilon}(\overset{\_}{y},g)\subset g^{k'}(\mathcal{U})$ and
$\mathcal{W}^{cs}_{r}(\overset{\_}{x},g)\subset g^{-k'}(\mathcal{W})$, and
hence,  $g^{k'}(\mathcal{U})\cap g^{-k'}(\mathcal{W})\neq\emptyset$, i.e.,
$g^{2k'}(\mathcal{U})\cap \mathcal{W}\neq\emptyset$.

Next, we will prove the case $i<j$. The case $i>j$ is similar.\\
\textbullet\hspace{0.3 cm} Case $i<j$\\
\noindent First assume $i > 1$. Consider $j=i+k$ for $k=1,2,\dots,l-i$. In 
this case we have that

\begin{align*}
d(\overset{\_}{y},g^{m_{i}}(z))&\leq d(\overset{\_}{y},f^{n_{i}}(z))+
d(f^{n_{i}}(z),f^{m_{i}}(z))+d(f^{m_{i}}(z),g^{m_{i}}(z))\\
\hspace{0.1 cm}&< \frac{\delta}{2}+\frac{\delta}{6}+\frac{\delta}{6} <\delta\\
\end{align*}

\noindent and therefore 
\[
\mathcal{F}^{ss}_{\epsilon}(g^{m_{i}}(z),g)\pitchfork W^{cu}_{r}(\overset{\_}{y},g)\neq\emptyset.
\]
Take $p\in\mathcal{F}^{ss}_{\epsilon}(g^{m_{i}}(z),g)\pitchfork W^{cu}_{r}(\overset{\_}{y},g)$. 
Since

\begin{align*}
m_{j}-m_{i}&=m_{i+k}-m_{i}=(m_{i+k}-m_{i+(k-1)})+(m_{i+(k-1)}-m_{i+(k-2)})\\
\hspace{0.1 cm}&+\dots + (m_{i+1}-m_{i})> km_{0} > m_{0},\\
g^{m_{j}-m_{i}}(\mathcal{F}^{ss}_{\epsilon}(g^{m_{i}}(z),g))&\subset B(g^{m_{j}}(z),\frac{\delta}{6}),\\
\end{align*}
and from this it follows that
\begin{equation*}
 g^{m_{j}-m_{i}}(p)\in B(g^{m_{j}}(z),\frac{\delta}{6}).
\end{equation*}

\noindent Thus,
\begin{align*}
d(g^{m_{j}-m_{i}}(p),\overset{\_}{x})&\leq d(g^{m_{j}-m_{i}}(p),g^{m_{j}}(z))+d(g^{m_{j}}(z),f^{m_{j}}(z))\\
\hspace{0.1 cm}&+d(f^{m_{j}}(z),f^{n_{j}}(z))+d(f^{n_{j}}(z),\overset{\_}{x})\\
\hspace{0.1 cm}&< \frac{\delta}{6}+\frac{\delta}{6}+\frac{\delta}{6}+\frac{\delta}{2}=\delta\\
\end{align*}
\noindent and from Lemma~\ref{Lemma:Consequence of the Stable Manifold Theorem}, we get

\[
(v)\hspace{0.5 cm} \mathcal{F}^{uu}_{\epsilon}(g^{m_{j}-m_{i}}(p),g)\pitchfork \mathcal{W}^{cs}_{r}(\overset{\_}{x},g)\neq\emptyset.
\]

\noindent Using that $(m_{j}-m_{i}) >0$ and $p\in \mathcal{W}^{cu}_{r}(\overset{\_}{y},g)$
and using $(ii)$, we have that

\[
 \mathcal{F}^{uu}_{\epsilon}(g^{m_{j}-m_{i}}(p),g)\subset 
g^{m_{j}-m_{i}}(\mathcal{F}^{uu}_{\epsilon}(p,g))\subset g^{m_{j}-m_{i}}(g^{k'}(\mathcal{U})).
\]
From $(iii)$ and $(v)$ we conclude that 

\[
g^{m_{j}-m_{i}}(g^{k'}(\mathcal{U}))\cap g^{-k'}(\mathcal{W})\neq \emptyset.
\]
\noindent In this case the proof is completed.\\
\newline
\noindent Now, assume $i=1$.\\
Consider $j=i+k$ for $k=1,2,\dots,l-i$.
\noindent In this case we have that

\begin{align*}
d(\overset{\_}{y},g^{n_{1}}(z))&\leq d(\overset{\_}{y},f^{n_{1}}(z))+d(f^{n_{1}}(z),g^{n_{1}}(z))\\
\hspace{0.1 cm}&<\frac{\delta}{2}+\frac{\delta}{6}< \delta\\
\end{align*}

\noindent and therefore

\[
\mathcal{F}^{ss}_{\epsilon}(g^{n_{1}}(z),g)\pitchfork \mathcal{W}^{cu}_{r}(\overset{\_}{y},g)\neq\emptyset.
\]
Take $p\in\mathcal{F}^{ss}_{\epsilon}(g^{n_{1}}(z),g)\pitchfork \mathcal{W}^{cu}_{r}(\overset{\_}{y},g)$.
Since

\begin{align*}
m_{j}-n_{1}=m_{1+k}-n_{1}&=(m_{1+k}-m_{k})+(m_{k}-m_{k-1})+\dots+(m_{2}-n_{1})\\
\hspace{0.1 cm}& >km_{0}>m_{0},\\
g^{m_{j}-n_{1}}(\mathcal{F}^{ss}_{\epsilon}(g^{n_{1}}(z),g))&\subset B(g^{m_{j}}(z),\frac{\delta}{6}),\\
\end{align*}
and from this it follows that

\[
g^{m_{j}-n_{1}}(p)\in B(g^{m_{j}}(z),\frac{\delta}{6}).
\]
Thus,
\begin{align*}
d(g^{m_{j}-n_{1}}(p),\overset{\_}{x})&\leq d(g^{m_{j}-n_{1}}(p),g^{m_{j}}(z))+d(g^{m_{j}}(z),f^{m_{j}}(z))\\
\hspace{0.1 cm}&+d(f^{m_{j}}(z),f^{n_{j}}(z))+d(f^{n_{j}}(z),\overset{\_}{x})\\
\hspace{0.1 cm}&< \frac{\delta}{6}+\frac{\delta}{6}+\frac{\delta}{6}+\frac{\delta}{2}= \delta\\
\end{align*}
and from Lemma~\ref{Lemma:Consequence of the Stable Manifold Theorem} , we get
\[
(vi)\hspace{0.5 cm}\mathcal{F}^{uu}_{\epsilon}(g^{m_{j}-n_{1}}(p),g)\pitchfork 
\mathcal{W}^{cs}_{r}(\overset{\_}{x},g)\neq\emptyset.
\]
However,
\[
\mathcal{F}^{uu}_{\epsilon}(g^{m_{j}-n_{1}}(p),g)\subset 
g^{m_{j}-n_{1}}(\mathcal{F}^{uu}_{\epsilon}(p,g))\subset g^{m_{j}-n_{1}}(g^{k'}(\mathcal{U})) 
\]
due to
\[
m_{j}-n_{1}>0,\hspace{0.2 cm} 
p\in\mathcal{W}^{cu}_{r}(\overset{\_}{y},g)\hspace{0.2 cm}\text{and}\hspace{0.2 cm}(ii).
\]
From $(iii)$ and $(vi)$ we conclude that
\[
g^{m_{j}-n_{1}}(g^{k'}(\mathcal{U}))\cap g^{-k'}(\mathcal{W})\neq\emptyset.
\]
Hence, the case $i<j$ is completed.
The case $i>j$ follows by symmetry, and the proof of Theorem is completed.

\end{proof}

\noindent The proof of last Theorem suggests the following Proposition as a possible, future, alternative way to remove
the condition Property SH for $f^{-1}$, to get robust transitivity.

\begin{Proposition}
Let $M$ be a compact Riemannian manifold,  $f$ a partially hyperbolic $C^{r}$-diffeomorphism in $M$,
and $p$ a periodic hyperbolic point for $f$, whose central direction is unstable and with 
a $\frac{\delta}{4}$-dense orbit, $\delta$ as in Lemma~\ref{Lemma:Consequence of the Stable Manifold Theorem}. If $f$  satisfy Property SH then $f$ is robustly transitive.
\end{Proposition}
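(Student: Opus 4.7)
The strategy is to follow the proof of Theorem~\ref{Theorem:Third result}, replacing the role played there by Property SH for $f^{-1}$ with the dynamics of the continuation of the hyperbolic periodic point $p$ together with the classical inclination lemma. Property SH for $f$ will still be used robustly to push any open set $\mathcal{U}$ forward into a center-unstable disc. Instead of producing a contracting $cs$-disc inside $\mathcal{W}$, we use a transverse intersection with $W^{s}(p_{g})$ followed by the $\lambda$-lemma at $p_{g}$ to spread iterates along $W^{u}(p_{g})$, a piece of which is arranged to lie inside $\mathcal{W}$.

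\textbf{Construction of $\mathcal{V}$.} I would first fix a $C^{1}$-neighborhood $\mathcal{V}$ of $f$ in which all ingredients persist: partial hyperbolicity (Lemma~\ref{Lemma:Partial hyperbolicity is open}); Property SH with uniform constants (Theorem~\ref{Theorem:SH is robust}, assuming $C=1$ after passing to a fixed power of $g$); a continuation $p_{g}$ of $p$ of the same period $T$ and same stable index $\dim E^{ss}$, with unstable bundle $E^{c}\oplus E^{uu}$; the orbit $\{g^{k}(p_{g})\}_{k=0}^{T-1}$ remaining $\frac{\delta}{2}$-dense (by continuous dependence of the finitely many iterates); and the validity of Lemma~\ref{Lemma:Consequence of the Stable Manifold Theorem} for $g$ with the prescribed $\epsilon,\delta,r$.

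\textbf{Core argument.} Fix $g\in\mathcal{V}$ and nonempty open sets $\mathcal{U},\mathcal{W}\subset M$. Arguing as in the first half of the proof of Theorem~\ref{Theorem:Third result}, using only Property SH for $g$, I choose $u\in\mathcal{U}$, a large $n_{0}$, a point $y\in\mathcal{F}^{uu}_{1}(g^{n_{0}}(u),g)$ with central forward expansion, and an integer $m'$ with $\lambda_{1}^{m'}r<\eta$; setting $k'=n_{0}+m'$ and $\bar y=g^{m'}(y)$, one obtains
\[
W^{cu}_{r}(\bar y,g)\subset g^{k'}(\mathcal{U}).
\]
By the density of the orbit of $p_{g}$, there exists $i\in\{0,\dots,T-1\}$ with $d(g^{i}(p_{g}),\bar y)<\delta$; Lemma~\ref{Lemma:Consequence of the Stable Manifold Theorem} then gives
\[
\mathcal{F}^{ss}_{\epsilon}(g^{i}(p_{g}),g)\pitchfork W^{cu}_{r}(\bar y,g)\neq\emptyset.
\]
Since $\dim W^{cu}_{r}(\bar y,g)=\dim E^{cu}$ equals the unstable dimension of $g^{i}(p_{g})$, the inclination lemma applied to $g^{T}$ ensures that $g^{nT}(W^{cu}_{r}(\bar y,g))$ $C^{1}$-accumulates on every compact piece of $W^{u}(g^{i}(p_{g}),g)=W^{u}(p_{g},g)$ as $n\to\infty$. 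Next, pick $w\in\mathcal{W}$ with $\overline{B(w,\rho)}\subset\mathcal{W}$ and, using density again, find $j$ with $d(g^{j}(p_{g}),w)<\delta$; applying the same lemma yields a point
\[
q\in\mathcal{F}^{uu}_{\epsilon}(g^{j}(p_{g}),g)\pitchfork W^{cs}_{r_{0}}(w,g)\subset W^{u}(p_{g},g)\cap\mathcal{W}
\]
for any sufficiently small $r_{0}$. A compact neighborhood $K$ of $q$ inside $W^{u}(p_{g},g)$ still lies in $\mathcal{W}$, so by the $\lambda$-lemma $g^{nT}(W^{cu}_{r}(\bar y,g))\cap\mathcal{W}\neq\emptyset$ for all $n$ sufficiently large, which gives $g^{k'+nT}(\mathcal{U})\cap\mathcal{W}\neq\emptyset$ and proves transitivity of $g$.

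\textbf{Main obstacle.} The delicate point is coordinating all the parameter choices so that the $C^{1}$-accumulation produced by the inclination lemma translates into genuine set-theoretic intersection with $\mathcal{W}$: the radius $r$ coming from Property SH, the pair $(\epsilon,\delta)$ of Lemma~\ref{Lemma:Consequence of the Stable Manifold Theorem}, the radius $r_{0}$ forcing $W^{cs}_{r_{0}}(w,g)\subset\mathcal{W}$, and the size of $K$ must all be chosen consistently. A further technical point is to verify that the density property of the orbit of $p$ survives $C^{1}$-perturbations; here the slack between the hypothesis ($\frac{\delta}{4}$-dense) and what is used in the argument ($\frac{\delta}{2}$-dense) is precisely what permits this to hold uniformly over $\mathcal{V}$.
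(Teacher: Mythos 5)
Your overall mechanism agrees with the paper's: Property SH (made robust via Theorem~\ref{Theorem:SH is robust} and Lemma~\ref{Lemma:Backward iterations contract}) forces $W^{cu}_{r}(\bar y,g)\subset g^{k'}(\mathcal{U})$; the $\frac{\delta}{2}$-dense orbit of the continuation $p_{g}$ plus Lemma~\ref{Lemma:Consequence of the Stable Manifold Theorem} gives a transverse intersection of that disc with $W^{s}(\mathcal{O}(p_{g}))$; and the $\lambda$-lemma for $g^{T}$ spreads forward iterates of the disc along the (center-)unstable manifold of the periodic orbit. Up to that point your argument is essentially the paper's.

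The gap is in how you connect $\mathcal{W}$ to $W^{u}(p_{g},g)$. You need a point $q\in W^{u}(p_{g},g)\cap\mathcal{W}$ together with a compact piece $K\ni q$ of $W^{u}(p_{g},g)$ inside $\mathcal{W}$, and you propose to obtain it from $\mathcal{F}^{uu}_{\epsilon}(g^{j}(p_{g}),g)\pitchfork W^{cs}_{r_{0}}(w,g)$ with $r_{0}$ small enough that $W^{cs}_{r_{0}}(w,g)\subset\mathcal{W}$. This does not close: in Lemma~\ref{Lemma:Consequence of the Stable Manifold Theorem} the constant $\delta$ is chosen \emph{after} the radius, so replacing $r$ by an arbitrarily small $r_{0}$ (dictated by the arbitrary open set $\mathcal{W}$) forces a correspondingly small $\delta(r_{0})$, while the orbit of $p_{g}$ is only $\frac{\delta}{2}$-dense for the one fixed $\delta=\delta(r)$. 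If instead you keep the fixed $r$, the intersection point $q$ lies in $W^{cs}_{r}(w,g)$, which need not be contained in $\mathcal{W}$. Since $W^{u}(\mathcal{O}(p_{g}),g)$ is not assumed dense, there is no a priori reason it meets an arbitrary $\mathcal{W}$ at all, so this is a missing idea rather than a bookkeeping issue. The paper circumvents it by never putting a piece of $W^{u}(p_{g},g)$ inside $\mathcal{W}$: it takes $\bar x=g^{-t}(x)$ with $t$ large so that the \emph{uniform-size} leaf $\mathcal{F}^{ss}_{\epsilon}(\bar x,g)$ is contained in $g^{-t}(\mathcal{W})$, locates $\bar x$ within $\delta/2$ of some $g^{j}(p_{g})$, and uses the lemma with the fixed pair $(\epsilon,r)$ to get $\mathcal{F}^{ss}_{\epsilon}(\bar x,g)\pitchfork W^{cu}_{r}(g^{j}(p_{g}),g)\neq\emptyset$; the $\lambda$-lemma iterates of the $cu$-disc, being $C^{1}$-close to $W^{cu}_{r}(g^{j}(p_{g}),g)$, must then cross $\mathcal{F}^{ss}_{\epsilon}(\bar x,g)\subset g^{-t}(\mathcal{W})$, yielding $g^{N}(\mathcal{U})\cap g^{-t}(\mathcal{W})\neq\emptyset$. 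Replacing your set $K$ by this pulled-back stable leaf is the step your proposal is missing.
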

\begin{proof}
Analogously to the proof in Theorem~\ref{Theorem:Third result} we divide the proof in two steps. The first step
deals with the construction of an appropriate neighbourhood $\mathcal{V}$ of $f$. 
In the second step we prove that any diffeomorphism in $\mathcal{V}$ is transitive.\\
\textbf{Step 1}\\
From Theorem~\ref{Theorem:SH is robust} there exist a $C^{1}$-neighbourhood,  $\mathcal{V}_{1}(f)$, $C>0$ and 
$\sigma>1$ such that for every $g\in\mathcal{V}_{1}$ and $x\in M$ there exists a point 
$y\in\mathcal{F}^{uu}_{1}(x,g)$ such that 
\begin{equation}\label{Cuarta}
m\lbrace Dg^{n}_{\mid E^{c}(g^{l}(y))}\rbrace> C\sigma^{n}\hspace{0.5 cm}\text{for any}\hspace{0.5 cm} n>0,\hspace{0.5 cm} l>0.
\end{equation}
We may assume that $C=1$. Otherwise we take a fixed power of every $g\in\mathcal{V}_{1}$.
Let $\lambda=\sigma^{-1}$, fix $0<\lambda <\lambda_{1} < 1$ and let $r >0$ be as
in Lemma~\ref{Lemma:Backward iterations contract}.
Consider $\epsilon >0$ given by Stable Manifold Theorem and let $r > 0$ be as above. Take $\delta > 0$
and $\mathcal{V}_{2}(f)\subset\mathcal{V}_{1}$ a neighbourhood of $f$ as in 
Lemma~\ref{Lemma:Consequence of the Stable Manifold Theorem}.
Next choose a neighbourhood $\mathcal{V}(f)$ contained in $\mathcal{V}_{2}$
such that if $g\in\mathcal{V}$ then the hyperbolic continuation $p_{g}$
of $p$  is a hyperbolic periodic point of $g$ with unstable central direction
and a $\frac{\delta}{2}$-dense orbit.

\noindent\textbf{Step 2}\\
We will prove that any $g\in\mathcal{V}$ is transitive.
Take two arbitrary open sets $\mathcal{U},\mathcal{W}\subset M$.
Let us prove that there exists a positive integer
$k_{0}$ such that $g^{k_{0}}(\mathcal{U})\cap\mathcal{W}\neq \varnothing$.
Choose $u\in\mathcal{U}$ and $x\in\mathcal{W}$. Let $\beta > 0$ be such that
$\mathcal{F}^{uu}_{\beta}(u,g)\subset\mathcal{U}$ and
$\mathcal{F}^{ss}_{\beta}(x,g)\subset\mathcal{W}$.
Take $n_{0}$ such that
$g^{n_{0}}(\mathcal{F}^{uu}_{\frac{\beta}{2}}(u,g))\supset\mathcal{F}^{uu}_{1}(g^{n_{0}}(u),g)$.
Consider $y\in\mathcal{F}^{uu}_{1}(g^{n_{0}}(u),g)$ satisfying:

\begin{equation}\label{Primeradoble}
m\{Dg^{n}_{\mid E^{c}}(g^{l}(y))\} > \sigma^{n}\hspace{1.25 cm} \text{for any} \qquad n>0,\quad l>0\\
\end{equation}

\noindent Let $\eta >0$ be such that

\begin{equation}\label{Terceradoble}
g^{-n_{0}}(\mathcal{W}^{cu}_{\eta}(y,g))\subset\mathcal{U}\\
\end{equation}
Next, choose a positive integer $m'$ such that $\lambda^{m'}_{1}r < \eta$. 
Set $k'=n_{0}+m'$. Thus, using \eqref{Primeradoble}, we get

\begin{equation}
\prod_{j=0}^{n-1}\parallel Dg^{-1}_{\mid_{E^{c}(g^{-j}(y)})}
\parallel<\lambda^{n},\hspace{1 cm} 0\leq n\leq m'\\
\end{equation}

\noindent and therefore

\begin{equation}
\prod_{j=0}^{n-1}\parallel Dg^{-1}_{\mid_{E^{cu}(g^{-j}(y)})}
\parallel<\lambda^{n},\hspace{1 cm} 0\leq n\leq m'\\
\end{equation}

\noindent From   Lemma~\ref{Lemma:Backward iterations contract} we conclude that

\begin{equation}\label{Quintadoble}
g^{-m'}(\mathcal{W}^{cu}_{r}(g^{m'}(y),g))\subset
\mathcal{W}^{cu}_{\lambda^{m'}_{1}r}(y,g)\subset
\mathcal{W}^{cu}_{\eta}(y,g)\\
\end{equation}

\noindent and hence, using \eqref{Terceradoble}, we have

\begin{equation}\label{Sextadoble}
g^{-k'}(\mathcal{W}^{cu}_{r}(g^{m'}(y),g))\subset\mathcal{U}\\
\end{equation}
\noindent Particularly, it follows

\begin{equation}\label{Septimadoble}
\mathcal{W}^{cu}_{r}(g^{m'}(y),g)\subset g^{k'}(\mathcal{U})\\
\end{equation}

\noindent For the sake of simplicity, we will denote $g^{m'}(y)$ for $\overset{\_}{y}$
and $g^{-t}(x)$ for $\overset{\_}{x}$. Now choose a positive integer $t$
such that
\begin{equation}
\mathcal{F}_{\epsilon}^{ss}(g^{-t}(w),g)\subset g^{-t}(\mathcal{F}_{\beta}^{ss}(w,g))
\end{equation}

\noindent Let $L$ be the period of $p_{g}$. There exist $i,j\in\{0,1,\cdots,L-1\}$
such that $\overset{\_}{y}\in B(g^{i}(p_{g}),\frac{\delta}{2})$ and
$\overset{\_}{x}\in B(g^{j}(p_{g}),\frac{\delta}{2})$.

\noindent\textbullet\hspace{0.3 cm} Case $i=j$

\noindent In this case, $d(\overset{\_}{x},\overset{\_}{y})<\delta$. Thus, using 
Lemma~\ref{Lemma:Consequence of the Stable Manifold Theorem}, 
$\mathcal{F}^{ss}_{\epsilon}(\overset{\_}{x},g)\pitchfork W^{cu}_{r}(\overset{\_}{y},g)\neq\varnothing$.
Moreover, we have $\mathcal{F}^{ss}_{\epsilon}(\overset{\_}{x},g)\subset g^{-t}(W)$
and $W^{cu}_{r}(\overset{\_}{y},g)\subset g^{k'}(\mathcal{U})$ so
$g^{-t}(\mathcal{W})\cap g^{k'}(\mathcal{U})\neq\varnothing$ that is
$\mathcal{W}\cap g^{k'+t}(\mathcal{U})\neq\varnothing$.
\newline

\noindent Next, we will prove the case $i\neq j$. \\
\textbullet\hspace{0.3 cm} Case $i\neq j$\\
\noindent We may assume without loss of generality that $i < j$. Consider $j=i+k$ for some $k\in\{1,2,\dots,L-i\}$. 
Hence  $d(\overset{\_}{y},g^{i}(p_{g}))< \frac{\delta}{2}$ and 
$\mathcal{F}^{ss}_{\epsilon}(g^{i}(p_{g}),g)\pitchfork W^{cu}_{r}(\overset{\_}{y},g)\neq\emptyset$.
Take $q\in\mathcal{F}^{ss}_{\epsilon}(g^{i}(p_{g}),g)\pitchfork W^{cu}_{r}(\overset{\_}{y},g)$. 
Note that $g^{j-i}(W_{r}^{cu}(\overset{\_}{y},g))$ intersect transversally $\mathcal{F}^{ss}(g^{i}(p_{g}),g)$ in $g^{j-i}(q)$.
Using the Lambda-Lemma for $g^{L}$
we get the existence of $N_{0}\in\mathbb{N}$ such that $g^{nL}(g^{j-i}(D))$ is $C^{1}$-close to
$W_{r}^{cu}((g^{i}(p_{g})),g)$ for any $n\geq N_{0}$ where $D$ is a disc such that 
$q\in D\subset W_{r}^{cu}(\overset{\_}{y},g)$.

\noindent Also as $\overset{\_}{x}\in B(g^{i}(p_{g}),\frac{\delta}{2})$ then
$\mathcal{F}_{\epsilon}^{ss}(\overset{\_}{x},g)\pitchfork W_{r}^{cu}(g^{j}(p_{g}),g)\neq\varnothing$.
So there exists $N_{1}\in\mathbb{N}$ with $N_{1}>N_{0}$ such that
$g^{nL}(g^{j-i}(D))\pitchfork\mathcal{F}_{\epsilon}^{ss}(\overset{\_}{x},g)\neq\varnothing$
for any $n\geq N_{1}$. So $g^{nL}(g^{j-i}(g^{k'}(\mathcal{U})))\cap g^{-t}(\mathcal{W})\neq\varnothing$
for any $n\geq N_{1}$. Consequently $g$ is transitive.

\end{proof}

\section{Property SH and Density of Periodic Points}
\label{sec:Property SH and Density of Periodic Points}

Here we prove that for a diffeomorphism exhibiting Property SH
and minimality of the  strong stable foliation the set of its 
periodic points is dense. So both transitivity and density
of the periodic points are robust properties under the hypotheses
of Property SH and minimality of the strong stable foliation.

\begin{Theorem}\label{Theorem:Density of periodic points}
Let $f\in Diff^{r}(M)$  be a partially hyperbolic diffeomorphism exhibiting Property SH
and such that the strong stable foliation is minimal. Then, $\overline{Per(f)}=M$. 
\end{Theorem}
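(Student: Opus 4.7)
My plan is to show that every open set $V \subset M$ meets $\operatorname{Per}(f)$. Fix such a $V$, pick $x \in V$ and a radius $\beta > 0$ small enough that the center-unstable disc $D := W^{cu}_\beta(x, f)$ is contained in $V$. Proposition~\ref{Proposition:Propiedad 1} applied to $D$ produces a periodic hyperbolic point $p$ (of period $k$ and stable dimension $\dim(E^{ss})$) whose stable manifold $W^s(p) = \mathcal{F}^{ss}(p)$ meets $D$ transversally at some $z \in V$.

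The next step is to generate a rich supply of transverse homoclinic intersections for $p$. By the minimality of $\mathcal{F}^{ss}$, the leaf $\mathcal{F}^{ss}(p) = W^s(p)$ is dense in $M$; in particular, it comes arbitrarily close to every point of the unstable manifold $W^u(p) = W^{cu}(p)$. Because $T W^s(p) = E^{ss}$ and $T W^u(p) = E^c \oplus E^{uu}$ are complementary at every point, the local product structure of the partially hyperbolic splitting lets me project approaching points of $\mathcal{F}^{ss}(p)$ onto $W^u(p)$ along short local strong stable arcs. The projected points still lie on $\mathcal{F}^{ss}(p)$, and therefore furnish transverse homoclinic intersections of $W^s(p)$ with $W^u(p)$ that are dense in $W^u(p)$.

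The Birkhoff--Smale theorem then provides, for each such transverse homoclinic point, a hyperbolic basic set $\Lambda$ containing $p$ in which the periodic orbits of $f$ are dense. Since the homoclinic points cover $W^u(p)$ densely, we conclude that $\overline{\operatorname{Per}(f)}$ contains $W^u(p)$ (and more generally a whole horseshoe-type Cantor set accumulating on $W^u(p)$).

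To transfer this density from $W^u(p)$ back into the originally chosen open set $V$, I apply the $\lambda$-lemma at the transverse intersection $z \in W^s(p) \pitchfork D$: a small center-unstable sub-disc of $D$ through $z$ has forward iterates $f^{nk}(\cdot)$ converging in the $C^1$ topology on compacts to $W^u(p)$. Because periodic points of $f$ coming from the horseshoe are dense in a $C^1$-neighbourhood of $W^u(p)$ inside $\overline{\operatorname{Per}(f)}$, their pre-images under $f^{nk}$---which are again periodic points of $f$---eventually land inside $D \subset V$, producing a periodic point of $f$ in $V$ and completing the argument. \textbf{Main obstacle.} This last transfer is the subtle step, since the $\lambda$-lemma only yields $C^1$-approximation of $W^u(p)$ by iterates of the sub-disc rather than a literal matching with periodic points; making it rigorous requires tracking the horseshoe's periodic orbits through the approximation via the persistence of transverse intersections. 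An equivalent clean route is to show directly that $\overline{\operatorname{Per}(f)}$ is $\mathcal{F}^{ss}$-saturated (using the continuous dependence of $\mathcal{F}^{ss}$ on its basepoint together with the density of periodic points in the horseshoe produced by Proposition~\ref{Proposition:Propiedad 1}) and then invoke the minimality of $\mathcal{F}^{ss}$ to conclude $\overline{\operatorname{Per}(f)} = M$ at once.
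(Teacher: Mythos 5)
Your argument is sound up to the point where you establish that transverse homoclinic points of $p$ are dense in $W^{u}(p)$ and hence, via Birkhoff--Smale, that $\overline{Per(f)}$ contains the closure of $W^{u}(p)$. The gap is exactly where you locate it, and it is not a technicality: the transfer back into $V$ fails. If $q$ is a periodic point close to $W^{u}_{loc}(p)$ and $f^{nk}(D')$ is $C^{1}$-close to $W^{u}_{loc}(p)$, the point $f^{-nk}(q)$ is merely another point on the orbit of $q$; since $f^{-nk}$ expands the strong stable direction without bound as $n\to\infty$, closeness of $q$ to $f^{nk}(D')$ gives no control on the distance from $f^{-nk}(q)$ to $D'$. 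What your construction actually yields is $\overline{Per(f)}\supset\overline{W^{u}(p)}$, and nothing in the hypotheses forces $W^{u}(p)$ to be dense: minimality is assumed for the \emph{strong stable} foliation, which controls $W^{s}(p)=\mathcal{F}^{ss}(p)$, not $W^{u}(p)$. Your proposed ``clean route'' has the same problem in disguise: $\overline{Per(f)}$ being $\mathcal{F}^{ss}$-saturated does not follow from continuous dependence of the leaves --- if $x_{n}\to x$ are periodic and $w\in\mathcal{F}^{ss}(x)$, continuity only produces points $w_{n}\in\mathcal{F}^{ss}(x_{n})$ with $w_{n}\to w$, and these $w_{n}$ are not periodic. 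Saturation of $\overline{Per(f)}$ by strong stable leaves is essentially equivalent to the conclusion of the theorem, so invoking it is circular.

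The missing ingredient is a genuine return mechanism producing a periodic orbit inside the prescribed open set, and the paper supplies it by an entirely different construction. It first reduces, using a short strong unstable arc and Property SH, to finding periodic points near points of the set $SH$ of uniformly center-expanding points. Then, for $h\in SH$, it uses the quantitative form of minimality (there is $K$ such that $\mathcal{F}^{ss}_{K}(x,f)$ meets every disc $W^{cu}_{d}(y,f)$) to build a return map on a small center-unstable disc around $f^{N}(h)$: a large backward iterate $f^{-N}$, which contracts center-unstable discs thanks to the SH expansion along the orbit of $h$, followed by a strong stable holonomy $\pi_{h,y_{2}}$ inside a trivializing cylinder and a further backward iterate $f^{-n_{0}}$. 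Brouwer's fixed point theorem applied to this map on the $cu$-disc pins down an invariant strong stable plaque, and a second application of Brouwer to the contraction $f^{n_{0}+N}$ on that plaque produces an actual periodic point in the given cylinder. To salvage your homoclinic approach you would need density of $W^{u}(p)$ (equivalently, of the homoclinic class of $p$), which would require an extra input such as Property SH for $f^{-1}$ or minimality of the strong unstable foliation --- neither of which is among the hypotheses of this theorem.
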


\begin{proof}

\begin{Remark}
Changing
$f$ by a power of itself, we can assume that there is $\sigma >1$ such that for any
$x\in M$ there exists $y^{u}\in\mathcal{F}^{uu}_{1}(x,f)$ such that
\begin{equation}
m\lbrace Df^{n}_{\mid E^{c}(f^{l}(y^{u}))}\rbrace> \sigma^{n}\hspace{0.2 cm}
\text{for any}\hspace{0.2 cm} n>0,\hspace{0.2 cm} l>0.
\end{equation}

\end{Remark}

Let SH be defined by:
\begin{equation}
SH=\{y\in M:m\lbrace Df^{n}_{\mid E^{c}(f^{l}(y))}\rbrace> \sigma^{n}\hspace{0.2 cm}
\text{for any}\hspace{0.2 cm} n>0,\hspace{0.2 cm} l>0\}. 
\end{equation}

\begin{Lemma}\label{Lemma:Reducing the theorem }
If $SH\subset\overline{Per(f)}$ then $M\subset\overline{Per(f)}$. 
\end{Lemma}

\begin{proof}
Given $x\in M$ and $V$ an open set containing $x$ choose $\beta >0$ be such that
$\mathcal{F}^{uu}_{\beta}(x,f)\subset V$ and $l_{0}$ such that
$\mathcal{F}^{uu}_{1}(f^{l_{0}}(x),f)\subset f^{l_{0}}(\mathcal{F}^{uu}_{\beta}(x,f))$.
Then take $h\in\mathcal{F}^{uu}_{1}(f^{l_{0}}(x),f)\cap SH$ and use continuity of $f$.
\end{proof}

\noindent From now on our goal will be to prove that $SH\subset\overline{Per(f)}$.

\noindent Let us fix $\epsilon', r'$ and $d'$
as in Lemma~\ref{Lemma:From the Stable Manifold Theorem}.

\begin{Definition}\label{Definition:Cylinders}
We will call a cylinder any open set $W\subset M$, with $diam (W)< d'$, which is the domain of some
local chart $\eta:M\rightarrow\mathbb{R}^{n}$ trivializing the strong stable foliation
such that $W^{cu}_{r'}(y,f)\nsubseteq W$ and
$\mathcal{F}^{ss}_{\epsilon'}(y,f)\nsubseteq W$ for any $y\in W$.  
\end{Definition}

\begin{Lemma}\label{Lemma:Existence of cylinders} 
For every $x\in M$ there exists a cylinder containing $x$. 
\end{Lemma}

\begin{proof}
First observe that there exists a local chart $(\widetilde{W},\widetilde{\eta})$, 
trivializing the strong stable foliation, with $x\in\widetilde{W}$ and such that
$W^{cu}_{r'}(x,f)\nsubseteq\widetilde{W},\mathcal{F}^{ss}_{\epsilon'}(x,f)\nsubseteq\widetilde{W}$.
Now by the continuous dependence of the manifolds $W^{cu}_{r'}(y,f)$ and
$\mathcal{F}^{ss}_{\epsilon'}(y,f)$
on the point $y$, follows the existence of an open set $\widetilde{\widetilde{W}}\subset\widetilde{W}$
containing $x$ and such that 
$W^{cu}_{r'}(z,f)\nsubseteq\widetilde{W},\mathcal{F}^{ss}_{\epsilon'}(z,f)\nsubseteq\widetilde{W}$
for any $z\in\widetilde{\widetilde{W}}$. Finally take a local chart trivializing the strong stable foliation $(W,\eta)$, 
with $x\in W\subset\widetilde{\widetilde{W}}$ and $diam (W)< d'$. 
\end{proof}

Notice that there exists a base $B$ of open sets of $M$
whose elements are cylinders. Let $\mathcal{C}$ be an open covering of cylinders of the manifold $M$ and 
$L$ its Lebesgue number.

\begin{Lemma}\label{Lemma:Local projections}
Let $C$ be a cylinder and let $\eta:C\rightarrow U^{cu}\times V^{ss}$ 
be a local chart trivializing the strong stable foliation, where $U^{cu}$, 
$V^{ss}$ are open sets in $\mathbb{R}^{c+uu},\mathbb{R}^{ss}$ respectively
and $0\in\eta(C)$. Let 
$\pi:U^{cu}\times V^{ss}\rightarrow U^{cu}\times\{0\}$ be the projection of
$\mathbb{R}^{c+uu+ss}$ on $\mathbb{R}^{c+uu}\times\{0\}$. Let $h\in C$ and $\hat{r}>0$
be such that $\overline{\mathcal{F}^{ss}_{\hat{r}}(h,f)}\subset C$ and let $\hat{\delta} >0$ be
such that $\overline{W^{cu}_{\hat{\delta}}(y,f)}\subset C$ for any 
$y\in\overline{\mathcal{F}^{ss}_{\hat{r}}(h,f)}$. Denote $\eta(h)$ by
$(h_{cu},h_{ss})$. Then the following
hold:

(i) $\pi_{\mid\eta(\overline{W^{cu}_{\hat{\delta}}(y,f)})}$ is an
homeomorphism on its image for any $y\in\overline{\mathcal{F}^{ss}_{\hat{r}}(h,f)}$.

(ii) There exists an open ball $B\subset U^{cu}$ centered at $h_{cu}$ such that
\begin{equation*}
B\times\{0\}\subset\bigcap_{y\in\overline{\mathcal{F}^{ss}_{\hat{r}}(h,f)}}\pi(\eta(W^{cu}_{\hat{\delta}}(y,f))).
\end{equation*}

(iii) There exists $0<\overline{\delta}<\hat{\delta}$ such that for any
$y_{1},y_{2}\in\overline{\mathcal{F}^{ss}_{\hat{r}}(h,f)}$ there exists
a continuous map $\pi_{y_{1},y_{2}}:W^{cu}_{\overline{\delta}}(y_{1},f)\rightarrow W^{cu}_{\hat{\delta}}(y_{2},f)$
and if $t'=\pi_{y_{1},y_{2}}(t)$ then $t'\in\mathcal{F}^{ss}(t,f)$.

\end{Lemma}
\begin{proof}
$(i)$ Observe that as $C$ is a cylinder if
$\overline{W^{cu}_{\overline{r}}(y,f)}\cap\mathcal{F}^{ss}_{\overline{\epsilon}}(x,f)\neq\varnothing$
then this intersection is exactly one point and $\pi_{\mid\eta(\overline{W^{cu}_{\overline{r}}(y,f)})}$
is injective.

$(ii)$ As $\pi_{\mid\eta(\overline{W^{cu}_{\hat{\delta}}(y,f)})}$ is an homeomorphism
on its image by the Invariance of Domain Theorem 
for any $y\in\overline{\mathcal{F}^{ss}_{\hat{r}}(h,f)}$
there exists an open ball $B_{y}\subset U^{cu}$ centered at $h_{cu}$ such that
$B_{y}\times\{0\}\subset\pi(\eta(W^{cu}_{\hat{\delta}}(y,f)))$.
Now use the compacity of $\overline{\mathcal{F}^{ss}_{\hat{r}}(h,f)}$ and  
continuous dependence of the manifolds $W^{cu}_{\hat{\delta}}(y,f)$ on the points $y$.

$(iii)$ Choose $\overline{\delta}<\hat{\delta}$ such that
$\eta(W^{cu}_{\overline{\delta}}(y,f))\subset B\times V^{ss}$ for any 
$y\in\overline{\mathcal{F}^{ss}_{\hat{r}}(h,f)}$.
Then define:
\begin{equation*}\pi_{y_{1},y_{2}}=(\eta)^{-1}\circ(\pi_{\mid\eta(\overline{W^{cu}_{\hat{\delta}}(y_{2},f)})})^{-1}
\circ\pi_{\mid\eta(\overline{W^{cu}_{\hat{\delta}}(y_{1},f)})}\circ\eta:W^{cu}_{\overline{\delta}}(y_{1},f)
\rightarrow W^{cu}_{\hat{\delta}}(y_{2},f)
\end{equation*} 

\end{proof}

\noindent Choose $\delta >0$ such that if $dist (z, SH)< \delta$ then
\begin{equation}\label{SH ' contraction}
\parallel Df^{-1}_{\mid E^{c}(f(z))} \parallel< (\sigma')^{-1}<1
\end{equation}
for some $1< \sigma'<\sigma$.

\noindent Let us define the set $\text{SH}'$ by 
\begin{equation*}
\text{SH}'=\bigcup_{z\in \text{SH}}\mathcal{F}^{ss}_{\delta}(z,f).
\end{equation*}

\begin{Lemma}\label{Lemma:Expansion for SH'}
If $x\in SH'$ then $m\lbrace Df^{n}_{\mid E^{c}(f^{l}(x))}\rbrace> (\sigma')^{n}$ for any $n>0, l>0$.  
\end{Lemma}
\begin{proof}
It follows by induction, using \eqref{SH ' contraction} and the fact that $f(\text{SH})\subset \text{SH}$.
\end{proof}

\noindent Let $\alpha=(\sigma')^{-1}$, fix $\alpha_{1}$ with $0< \alpha<\alpha_{1}<1$ and
let $r_{0}$ be as in Lemma~\ref{Lemma:Backward iterations contract}.

\noindent Consider $\lambda<1$ the contraction factor of the strong stable subbundle. 

\noindent Let $h\in\text{SH}$ and let $U\subset M$ be an open set containing $h$. We will prove
that there exists a periodic point in $U$.

\noindent Let $C\in B$ be a cylinder contained in $U$ such that $h\in C$.

\noindent Take $\hat{r}, K$ and $n_{0}\in\mathbb{N}$ such that  $0<\hat{r}<\delta$, $\overline{\mathcal{F}^{ss}_{2\hat{r}}(h,f)}\subset C$,
$K> \delta+\epsilon_{1}$, 
\begin{equation}\label{the d-minimality}
\mathcal{F}^{ss}_{K}(x,f)\pitchfork W^{cu}_{d}(y,f)\neq\varnothing, \forall x,y\in M.
\end{equation}
and
\begin{equation*}
 \mathcal{F}^{ss}_{K}(f^{-n_{0}}(x),f)\subset f^{-n_{0}}(\mathcal{F}^{ss}_{\hat{r}}(x,f)),\quad\forall x\in M.  
\end{equation*}

Take also $\hat{\delta}$ satisfying simultaneously the following three conditions:\\

1)$\overline{W^{cu}_{\hat{\delta}}(y,f)}\subset C,\quad\forall y\in\overline{\mathcal{F}^{ss}_{\hat{r}}(h,f)}$

2)$f^{-n_{0}}(W^{cu}_{\hat{\delta}}(y,f))\subset W^{cu}_{d}(f^{-n_{0}}(y),f),\quad\forall y\in\overline{\mathcal{F}^{ss}_{\hat{r}}(h,f)}$

3)If $dist(z,\overline{\mathcal{F}^{ss}_{\hat{r}}(h,f)})< \hat{\delta}$ then $\overline{\mathcal{F}^{ss}_{\hat{r}}(z,f)}\subset C$.\\

\noindent Let now $\overline{\delta}$ and $\pi_{y_{1},y_{2}}$ be like in Lemma~\ref{Lemma:Local projections} and 
such that
\begin{equation}\label{inclusions}
\mathcal{F}^{ss}_{\hat{r}}(\pi_{h,y_{2}}(q),f)\subset \mathcal{F}^{ss}_{2\hat{r}}(q,f),
\quad\forall q\in W^{cu}_{\overline{\delta}}(h,f),
\quad\forall y_{2}\in \overline{\mathcal{F}^{ss}_{\hat{r}}(h,f)} 
\end{equation}

\noindent Let $N\in\mathbb{N}$ be such that $(\alpha_{1})^{N}r_{0}< \overline{\delta}$ and
$f^{N}(\overline{\mathcal{F}^{ss}_{2\hat{r}}(y,f)})\subset\mathcal{F}^{ss}_{\delta}(f^{N}(y),f)$, $\quad\forall y\in M$.
\noindent From Lemma~\ref{Lemma:Expansion for SH'} it follows that
\begin{equation*}
\prod^{n}_{j=0}\parallel Df^{-1}_{\mid E^{c}(f^{-j}(z))} \parallel<\alpha^{n},\quad 0\leq n\leq N,
\quad\forall z\in f^{N}(\overline{\mathcal{F}^{ss}_{\hat{r}}(h,f)}) 
\end{equation*}
and therefore
\begin{equation*}
\prod^{n}_{j=0}\parallel Df^{-1}_{\mid E^{cu}(f^{-j}(z))} \parallel<\alpha^{n},
\quad 0\leq n\leq N,\quad\forall z\in f^{N}(\overline{\mathcal{F}^{ss}_{\hat{r}}(h,f)}). 
\end{equation*}

\noindent Then by Lemma~\ref{Lemma:Backward iterations contract} we conclude that:

\begin{equation*} 
f^{-N}(W^{cu}_{r_{1}}(f^{N}(y),f))\subset f^{-N}(W^{cu}_{r_{0}}(f^{N}(y),f))\subset
W^{cu}_{\alpha^{N}_{1}r_{0}}(y,f)\subset W^{cu}_{\overline{\delta}}(y,f) 
\end{equation*}
for any  $y\in\overline{\mathcal{F}^{ss}_{\hat{r}}(h,f)}$.

\noindent Put $y_{1}=h$. We know that $\mathcal{F}^{ss}_{K}(f^{-n_{0}}(h),f)$
intersects $W^{cu}_{d}(f^{N}(h),f)$ in some point $z$. Then there exists
$y_{2}\in\mathcal{F}^{ss}_{\hat{r}}(h,f)$ such that $z=f^{-n_{0}}(y_{2})$
and a continuous function $\pi_{h,y_{2}}:W^{cu}_{\overline{\delta}}(h,f)\rightarrow W^{cu}_{\hat{\delta}}(y_{2},f)$
such that if $t'=\pi_{h,y_{2}}(t)$ then $t'\in\mathcal{F}^{ss}(t,f)$.

\noindent Set $x=f^{N}(h)$. Lemma~\ref{Lemma:Encapsulating cylinder} implies
the existence of a cylinder $\hat{C}$ containing 
\begin{equation*}
A=W^{cu}_{r_{1}}(x,f)\cup(\bigcup_{y\in W^{cu}_{d}(z,f)}\mathcal{F}^{ss}_{\epsilon_{1}}(y,f))
\end{equation*}
and such that for any $y\in W^{cu}_{d}(z,f)$ the intersection of the manifolds $\mathcal{F}^{ss}_{\epsilon_{1}}(y,f)$
and $W^{cu}_{r_{1}}(x,f)$ is exactly one point.

Now let $\phi:\hat{C}\rightarrow \hat{U}^{cu}\times\hat{V}^{ss}$ be the trivializing local
chart of the strong stable foliation with $0\in \phi(\hat{C})$ and
$\hat{\pi}:\hat{U}^{cu}\times \hat{V}^{ss}\rightarrow \hat{U}^{cu}\times\{0\}$ the
projection. Observe that if $\pi_{1}=\hat{\pi}_{\mid \phi(\overline{W^{cu}_{r_{1}}(x,f)})}$ then $\pi_{1}$
is a homeomorphism on its image.

On the other side if $\pi_{2}=\hat{\pi}_{\mid \phi(W^{cu}_{d}(z,f))}$,
as the intersection between the manifolds $\mathcal{F}^{ss}_{\epsilon_{1}}(y,f)$
and $W^{cu}_{r_{1}}(x,f)$ is exactly one point, it follows 
\begin{equation*}
\pi_{2}(\phi(W^{cu}_{d}(z,f)))\subset\pi_{1}(\phi(W^{cu}_{r_{1}}(x,f))). 
\end{equation*}

\noindent Then the function 

\begin{equation*}
g:\pi_{1}(\phi(\overline{W^{cu}_{r_{1}}(x,f)}))\rightarrow\pi_{2}(\phi(W^{cu}_{d}(z,f))) 
\end{equation*}
defined by
$g=\pi_{2}\circ\phi\circ f^{-n_{0}}\circ \pi_{h,y_{2}}\circ f^{-N}\circ (\phi)^{-1}\circ (\pi_{1})^{-1}$
is well defined and continuous. Hence by Brower's fixed point Theorem there exists a fixed point 
$p\in\pi_{1}(\phi(W^{cu}_{r_{1}}(x,f)))$, that is
\begin{equation}\label{The equation}
\pi_{2}\circ\phi\circ f^{-n_{0}}\circ \pi_{h,y_{2}}\circ f^{-N}\circ (\phi)^{-1}\circ (\pi_{1})^{-1}(p)=p.
\end{equation}

Observe that $\hat{\pi}(\pi^{-1}_{1}(p))=\hat{\pi}(\pi^{-1}_{2}(p))=p$
so if  $p_{r_{1}}=(\phi)^{-1}\circ\pi^{-1}_{1}(p)$,
$p_{-n_{0}}=(\phi)^{-1}\circ\pi^{-1}_{2}(p)$ then  $\mathcal{F}^{ss}(p_{r_{1}},f)=\mathcal{F}^{ss}(p_{-n_{0}},f)$ and
\begin{equation*}
\mathcal{F}^{ss}(f^{-N}(p_{-n_{0}}),f)=\mathcal{F}^{ss}(f^{-N}(p_{r_{1}}),f)\hspace{0.1 cm}\text{and}\hspace{0.1 cm}
\mathcal{F}^{ss}(f^{n_{0}}(p_{-n_{0}}),f)=\mathcal{F}^{ss}(f^{n_{0}}(p_{r_{1}}),f). 
\end{equation*}
From \eqref{The equation}
\begin{equation*}
\pi_{h,y_{2}}\circ f^{-N}(p_{r_{1}})=f^{n_{0}}(p_{-n_{0}}). 
\end{equation*}
So
\begin{align*}
\mathcal{F}^{ss}(f^{n_{0}}(p_{r_{1}}),f)&=\mathcal{F}^{ss}(f^{n_{0}}(p_{-n_{0}}),f)=
\mathcal{F}^{ss}(\pi_{h,y_{2}}\circ f^{-N}(p_{r_{1}}),f)\\
\hspace{0.5 cm}&=\mathcal{F}^{ss}(f^{-N}(p_{r_{1}}),f)=\mathcal{F}^{ss}(f^{-N}(p_{-n_{0}}),f)
\end{align*}
which implies $\mathcal{F}^{ss}(p_{r_{1}},f)=\mathcal{F}^{ss}(f^{-n_{0}-N}(p_{-n_{0}}),f)$.	

Observe that $p_{-n_{0}}\in W^{cu}_{d}(z,f)$, $p_{r_{1}}\in W^{cu}_{r_{1}}(x,f)=W^{cu}_{r_{1}}(f^{N}(h),f)$ and
$p_{r_{1}}\in\mathcal{F}^{ss}_{\epsilon_{1}}(p_{-n_{0}},f)$.
Thus $dist_{\mathcal{F}^{ss}}(p_{r_{1}},p_{-n_{0}})\leq\epsilon_{1}$
and $p_{-n_{0}}=f^{-n_{0}}\circ\pi_{h,y_{2}}\circ f^{-N}(p_{r_{1}})\in f^{-n_{0}}(W^{cu}_{\hat{\delta}}(y_{2},f))$.\\
\noindent Take $\theta\in\mathcal{F}^{ss}_{\delta}(p_{r_{1}},f)$ arbitrary. Then
\begin{equation*}
dist_{\mathcal{F}^{ss}}(\theta,p_{-n_{0}})\leq dist_{\mathcal{F}^{ss}}(\theta,p_{r_{1}})+
dist_{\mathcal{F}^{ss}}(p_{r_{1}},p_{-n_{0}})\leq \delta+\epsilon_{1}<K 
\end{equation*}
and from there
\begin{equation*}
\mathcal{F}^{ss}_{\delta}(p_{r_{1}},f)\subset\mathcal{F}^{ss}_{K}(p_{-n_{0}},f)\subset
f^{-n_{0}}(\mathcal{F}^{ss}_{\hat{r}}(f^{n_{0}}(p_{-n_{0}}),f)). 
\end{equation*}
Remember that
\begin{align*}
f^{N}(\overline{\mathcal{F}^{ss}_{\hat{r}}(f^{n_{0}}(p_{-n_{0}}),f)})&\subset f^{N}(\overline{\mathcal{F}^{ss}_{2\hat{r}}(f^{-N}(p_{r_{1}}),f)})\subset
\mathcal{F}^{ss}_{\delta}(f^{N}(f^{-N}(p_{r_{1}})),f)\\
\hspace{0.5 cm}&=\mathcal{F}^{ss}_{\delta}(p_{r_{1}},f)\subset f^{-n_{0}}(\mathcal{F}^{ss}_{\hat{r}}(f^{n_{0}}(p_{-n_{0}}),f))
\end{align*}

\noindent From there
\begin{align*}
f^{n_{0}+N}(f^{-n_{0}}(\overline{\mathcal{F}^{ss}_{\hat{r}}(f^{n_{0}}(p_{-n_{0}}),f)})&=
f^{N}(\overline{\mathcal{F}^{ss}_{\hat{r}}(f^{n_{0}}(p_{-n_{0}}),f)})\\
\hspace{0.5 cm}&\subset f^{-n_{0}}(\overline{\mathcal{F}^{ss}_{\hat{r}}(f^{n_{0}}(p_{-n_{0}}),f)}).
\end{align*}
Again by Brower's fixed point Theorem there exists
$Q\in f^{-n_{0}}(\overline{\mathcal{F}^{ss}_{\hat{r}}(f^{n_{0}}(p_{-n_{0}}),f)})
\subset f^{-n_{0}}(C)$
a fixed point by the function $f^{n_{0}+N}$, and hence $f^{n_{0}}(Q)$ a periodic point in $C$.

\end{proof}

\section{Examples}
\label{sec:Examples}

\subsection{Shub's example}
\label{subsec:Shub's example} \quad
\newline
The conditions in Corollary~\ref{Corollary:Observacion 17 del Referee} and
Theorem~\ref{Theorem:Density of periodic points} 
are fulfilled by the widely known example of Shub.
This is because Pujals-Sambarino proved in ~\cite{PS}
that it satisfies the Property SH and that its stable foliation
is robustly minimal.

\subsection{A wider scenario for SH on the inverse}
\label{subsec:A wider scenario for SH on the inverse} \quad
\newline

\noindent The following Proposition~\ref{Proposition:Diffeomorphisms whose inverse is SH}
show how, under some conditions, to get perturbations with the Property SH
and whose inverses also has the Property SH. All the conditions in this Proposition
are fulfilled, in particular, by Ma\~{n}\'e's example.

Let $M$ be a smooth compact boundaryless three dimensional manifold and
$\mathcal{T}$ the set of non Anosov robustly transitive partially hyperbolic
diffeomorphisms in $M$. Denote by $\mathcal{T}'$ the subset of $\mathcal{T}$ consisting of
the diffeomorphisms with strong stable robustly minimal foliation.

\begin{Proposition}\label{Proposition:Diffeomorphisms whose inverse is SH}
There exists an open and dense subset $\mathcal{D}'$ of $\mathcal{T}'$
such that for every $g\in\mathcal{D}'$ we have that $g^{-1}$ satisfies
Property SH.
\end{Proposition}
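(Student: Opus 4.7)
The natural candidate is
\[
\mathcal{D}'=\{g\in\mathcal{T}' : g \text{ admits a } cu\text{-blender}\}.
\]
I will verify three things: (a) every $g\in\mathcal{D}'$ has $g^{-1}$ satisfying Property SH; (b) $\mathcal{D}'$ is open in $\mathcal{T}'$; (c) $\mathcal{D}'$ is dense in $\mathcal{T}'$. For (a), every $g\in\mathcal{T}'$ has a (robustly) minimal strong stable foliation, so the second assertion of Proposition~\ref{Proposition:Enrique} applies directly and yields Property SH for $g^{-1}$. For (b), the $C^{1}$-persistence of blenders built into Definition~\ref{Definition:Blender} makes $\{g:\ g \text{ has a } cu\text{-blender}\}$ open, and $\mathcal{T}'$ is open because robust transitivity and robust minimality of $\mathcal{F}^{ss}$ are open conditions; $\mathcal{D}'$ is the intersection of these two open sets.

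The content is in (c). Fix $g\in\mathcal{T}'$ and a $C^{1}$-neighbourhood $\mathcal{U}\subset\mathcal{T}'$ of $g$. I would first perturb $g$ inside $\mathcal{U}$ to obtain $g'$ with a heterodimensional cycle associated to saddles of consecutive indices $p$ and $p+1$. The input is that a non-Anosov, robustly transitive partially hyperbolic diffeomorphism of a three-manifold must possess periodic orbits of different indices in its chain-recurrence class (if every periodic orbit had the same index the center bundle would be uniformly hyperbolic along a dense set, contradicting non-Anosov character), and Hayashi's connecting lemma then produces transverse and quasi-transverse intersections realising the cycle via an arbitrarily small $C^{1}$-perturbation. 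Because partial hyperbolicity with one-dimensional center is $C^{1}$-open and forbids tangencies between $E^{ss}$ and $E^{c}\oplus E^{uu}$ or between $E^{ss}\oplus E^{c}$ and $E^{uu}$, the cycle produced is automatically $C^{1}$-far from homoclinic tangencies.

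I would then invoke Proposition~\ref{Proposition:Bonatti, Diaz, Viana}: it furnishes an open set $\mathcal{V}\subset Diff^{1}(M)$ with $g'\in\overline{\mathcal{V}}$ such that every $h\in\mathcal{V}$ admits both a $cs$-blender for $h$ and a $cs$-blender for $h^{-1}$, i.e., a $cu$-blender for $h$. Since $\mathcal{U}$ is an open neighbourhood of $g'$, the intersection $\mathcal{V}\cap\mathcal{U}$ is nonempty, and any $h$ in it lies in $\mathcal{T}'$ and has a $cu$-blender, so $h\in\mathcal{D}'\cap\mathcal{U}$. This yields density.

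The main obstacle is the perturbation step producing the heterodimensional cycle while staying inside $\mathcal{T}'$: one must locate periodic saddles of indices $p$ and $p+1$ homoclinically accessible to one another through the recurrent dynamics of $g$, apply the connecting lemma at both a transverse and a quasi-transverse intersection, and keep the perturbation small enough that robust transitivity and robust minimality of $\mathcal{F}^{ss}$ are preserved. These ingredients are available in the literature (following works of D\'iaz, Bonatti--D\'iaz--Ures, and Bonatti--D\'iaz--Viana) but their coordination is the delicate point; everything else in the argument is either a direct application of Propositions~\ref{Proposition:Enrique} and~\ref{Proposition:Bonatti, Diaz, Viana} or an openness remark.
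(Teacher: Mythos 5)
Your proposal is correct and follows essentially the same route as the paper: the paper's one-line proof invokes Proposition~\ref{Proposition:Enrique} together with Claim~\ref{Claim:Open dense subset with blenders}, whose supporting chain (Ma\~{n}\'e's characterisation of Anosov systems giving periodic points of different indices, the creation of a codimension-one heterodimensional cycle, and then Proposition~\ref{Proposition:Bonatti, Diaz, Viana} producing persistent $cs$- and $cu$-blenders) is exactly the density argument you sketch. The only difference is that you spell out the perturbation steps the paper delegates to its cited Claims and references.
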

\begin{proof}
Just apply the following Claim~\ref{Claim:Open dense subset with blenders}  and
Proposition~\ref{Proposition:Enrique}. 
\end{proof}

\begin{Claim}\label{Claim:Mane}
Let $f\in Diff^{1}(M)$ be a diffeomorphism such that its periodic points are
$C^{1}$-robustly hyperbolic and $\Omega(f)=M$. Then $f$ is Anosov.
\end{Claim}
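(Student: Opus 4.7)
The plan is to recognize the hypothesis as the classical \emph{star} condition and then invoke the star theorem of Ma\~n\'e--Hayashi--Aoki. The condition that the periodic points of $f$ are $C^{1}$-robustly hyperbolic means, by definition, that there is a $C^{1}$-neighbourhood $\mathcal{U}\subset Diff^{1}(M)$ of $f$ such that every periodic point of every $g\in\mathcal{U}$ is hyperbolic. This is precisely what is meant by saying that $f$ lies in the interior $\mathcal{F}^{1}(M)$ of the set of diffeomorphisms whose periodic points are all hyperbolic, i.e.\ $f$ is a $C^{1}$ star diffeomorphism.

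First, I would apply the star theorem: any $f\in\mathcal{F}^{1}(M)$ satisfies Axiom A together with the no-cycles property. This was proved by Ma\~n\'e in dimension two and extended by Hayashi and Aoki to arbitrary dimension, the core tool being the Liao--Ma\~n\'e selecting lemma used to promote the uniform bounds on eigenvalues of periodic orbits (obtained from Pliss's lemma applied to the robust hyperbolicity assumption) to a dominated/hyperbolic splitting over $\Omega(f)$. Thus $\Omega(f)$ is a hyperbolic set.

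Second, I would combine Axiom A with the assumption $\Omega(f)=M$. Axiom A provides the spectral decomposition
\[
\Omega(f)=\Lambda_{1}\sqcup\Lambda_{2}\sqcup\cdots\sqcup\Lambda_{k},
\]
where each $\Lambda_{i}$ is a compact, $f$-invariant, topologically transitive, uniformly hyperbolic basic set, and distinct $\Lambda_{i}$'s are pairwise disjoint. Since $\Omega(f)=M$, this writes the connected manifold $M$ as a disjoint union of finitely many compact sets. Because each $\Lambda_{i}$ is closed in $M$ and the decomposition is disjoint, connectedness of $M$ forces $k=1$ and $\Lambda_{1}=M$. Hence the hyperbolic splitting over $\Omega(f)$ is defined on all of $M$, and $f$ is Anosov.

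The main obstacle in the argument is entirely packed into the star theorem: turning the qualitative robust hyperbolicity of periodic orbits into uniform quantitative bounds (this is the Pliss-type step, together with the Liao--Ma\~n\'e selecting lemma) and then showing these bounds extend by continuity/ergodic closing to the whole of $\Omega(f)$. Once this deep result is cited, the rest of the argument reduces to the routine spectral-decomposition plus connectedness observation above. If one did not wish to cite Hayashi--Aoki directly, the alternative plan would be: prove a uniform lower bound on the hyperbolicity indices along periodic orbits in $\mathcal{U}$, invoke Ma\~n\'e's ergodic closing lemma to deduce that every ergodic invariant measure has non-zero Lyapunov exponents with uniform bounds, and then deduce uniform hyperbolicity of $\Omega(f)=M$; but this is essentially re-proving the star theorem.
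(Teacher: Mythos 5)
Your argument is correct, and like the paper it ultimately rests on citing the deep literature; the difference is in which result you cite. The paper's own proof is a bare reference to Ma\~{n}\'{e} \cite{M1}: the claim is precisely Theorem B of \emph{An ergodic closing lemma}, where Ma\~{n}\'{e} proves directly, via the ergodic closing lemma, that a star diffeomorphism with $\Omega(f)=M$ is Anosov, without first establishing Axiom A for general star systems. You instead route through the later and strictly stronger Aoki--Hayashi star theorem (every $C^{1}$ star diffeomorphism satisfies Axiom A and the no-cycle condition) and then specialize to $\Omega(f)=M$. That derivation is valid but heavier than necessary, and your final spectral-decomposition-plus-connectedness step is superfluous: once Axiom A gives that $\Omega(f)$ is a hyperbolic set, the hypothesis $\Omega(f)=M$ already says the whole manifold carries a uniformly hyperbolic splitting, which is the definition of Anosov, with no need to count basic sets. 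Both routes defer the genuinely hard content --- upgrading robust hyperbolicity of periodic orbits to uniform hyperbolicity via Pliss-type estimates, the selecting lemma, or the ergodic closing lemma --- to the cited works, so there is no gap, only a choice of which black box to open.
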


\begin{proof}

See ~\cite{M1}.

\end{proof}

\begin{Claim}\label{Claim:Hyperbolic periodic points with different indices}
There is a dense subset $\mathcal{A}$ of $\mathcal{T}$
such that for every $f\in\mathcal{A}$ 
there exists a pair  of hyperbolic periodic points with different indices.
\end{Claim}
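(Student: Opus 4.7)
The plan is to combine Claim~\ref{Claim:Mane} (in its contrapositive form) with two successive applications of Franks' Lemma along periodic orbits. Each pass produces a hyperbolic periodic point with a prescribed index; by choosing opposite signs for the two central multipliers, we land on a diffeomorphism arbitrarily $C^{1}$-close to $f$ with two periodic orbits of distinct indices.

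First, I would note that $\mathcal{T}$ is open in $\mathrm{Diff}^{1}(M)$: partial hyperbolicity is open by Lemma~\ref{Lemma:Partial hyperbolicity is open}, robust transitivity is open by definition, and since $\dim E^{c}\geq 1$ and this dimension is preserved under $C^{1}$-small perturbations, no diffeomorphism near $f$ can be Anosov. Also, transitivity yields $\Omega(f)=M$. Therefore $f$ (and every nearby diffeomorphism in $\mathcal{T}$) satisfies the hypotheses opposite to the conclusion of Claim~\ref{Claim:Mane}, so its periodic orbits are \emph{not} $C^{1}$-robustly hyperbolic. Pick $g_{0}\in\mathcal{T}$ arbitrarily close to $f$ with a non-hyperbolic periodic point $p_{0}$. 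Because the sub-bundles $E^{ss}(g_{0})$ and $E^{uu}(g_{0})$ are uniformly contracting and expanding, every non-hyperbolic Floquet multiplier of $p_{0}$ must lie in the one-dimensional central direction $E^{c}(g_{0})$.

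Now apply Franks' Lemma along the orbit of $p_{0}$, perturbing only the central component of the cocycle so as to preserve partial hyperbolicity: produce $g_{1}$, $C^{1}$-close to $f$ (hence in $\mathcal{T}$), whose continuation of $p_{0}$ is hyperbolic with central multiplier of modulus strictly greater than $1$. This periodic point then has stable index $\dim E^{ss}$. Since $g_{1}\in\mathcal{T}$ satisfies $\Omega(g_{1})=M$ and is not Anosov, the contrapositive of Claim~\ref{Claim:Mane} applies once more: there exists $g_{2}$, $C^{1}$-close to $g_{1}$, possessing a non-hyperbolic periodic point $p_{2}$. By persistence of hyperbolic points, $p_{2}$ cannot be the continuation of $p_{0}$, so the two orbits are genuinely distinct. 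A second application of Franks' Lemma, now adjusting the central multiplier of $p_{2}$ to have modulus strictly less than $1$, yields $g_{3}\in\mathcal{T}$ in which $p_{2}$ becomes hyperbolic of index $\dim E^{ss}+\dim E^{c}$. Provided each perturbation is sufficiently small in $C^{1}$, the hyperbolic continuation of $p_{0}$ survives in $g_{3}$ with the original index, giving the desired pair of hyperbolic periodic points with distinct indices.

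The main obstacle is the careful execution of Franks' Lemma within the partially hyperbolic class: one must perturb the derivative cocycle along the periodic orbit only in the central direction (so that the resulting diffeomorphism remains partially hyperbolic, and hence in $\mathcal{T}$), and then realize this linear perturbation as a genuine $C^{1}$ perturbation of the diffeomorphism that crosses the central multiplier through the unit circle while not disturbing the hyperbolicity of the periodic point constructed in the previous step. Quantifying the allowed size of the successive perturbations, so that the openness of $\mathcal{T}$ and the persistence of hyperbolic orbits are both preserved all the way to the final $g_{3}$, is the technical heart of the argument.
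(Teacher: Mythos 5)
Your overall strategy --- the contrapositive of Claim~\ref{Claim:Mane} to produce a non-hyperbolic periodic point, followed by Franks-type perturbations of the central multiplier --- is the same mechanism the paper is invoking when it cites Claim~\ref{Claim:Mane} together with~\cite{M2}. But there is a genuine gap in how you keep the perturbations inside $\mathcal{T}$ and, more seriously, in the second application of Claim~\ref{Claim:Mane}. The assertion that ``no diffeomorphism near $f$ can be Anosov'' because $\dim E^{c}\geq 1$ is preserved is false: a diffeomorphism can be Anosov and at the same time partially hyperbolic with a non-trivial centre bundle (a linear automorphism of $\mathbb{T}^{3}$ with real eigenvalues of moduli $\lambda_{1}<\lambda_{2}<1<\lambda_{3}$ is Anosov with two-dimensional stable bundle and carries a partially hyperbolic splitting with $\dim E^{c}=1$). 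Being non-Anosov is a closed condition, not an open one, so the openness of $\mathcal{T}$ does not follow as you claim; in particular you have not shown that $g_{1}$ is non-Anosov, and that is precisely the hypothesis you need in order to apply the contrapositive of Claim~\ref{Claim:Mane} a second time and extract the point $p_{2}$.

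The gap is repairable, but it needs an actual argument. For the final diffeomorphism the fix is easy: an Anosov diffeomorphism of a connected manifold has all periodic points of the same index ($=\dim E^{s}$), so any $g$ carrying two hyperbolic periodic points of different indices is automatically non-Anosov and hence lies in $\mathcal{T}$. For the intermediate step the cleanest repair is to avoid invoking Claim~\ref{Claim:Mane} twice: from the non-hyperbolic periodic point $p_{0}$ of $g_{0}$, whose degenerate multiplier necessarily sits in the one-dimensional centre, a single localized $C^{1}$-perturbation unfolding the degenerate orbit produces \emph{two} nearby hyperbolic periodic orbits whose central multipliers lie on opposite sides of the unit circle, hence of indices $\dim E^{ss}$ and $\dim E^{ss}+1$ simultaneously. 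This yields the desired pair in one step, with Claim~\ref{Claim:Mane} applied only to $f$ itself, where non-Anosovness is part of the definition of $\mathcal{T}$.
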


\begin{proof}
It follows from Claim~\ref{Claim:Mane} and ~\cite{M2}. 
\end{proof}

\begin{Claim}\label{Claim:Dense subset with heterodimensional cycles}
There exists a dense subset $\mathcal{B}$ of $\mathcal{T}$ such that every 
$f\in\mathcal{B}$ has a heterodimensional cycle of codimension one.
\end{Claim}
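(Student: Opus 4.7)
My plan is to take $\mathcal{B}$ to be the set of diffeomorphisms in $\mathcal{T}$ possessing a heterodimensional cycle of codimension one, and to establish its density by perturbing, in two steps, an arbitrary element of the dense set $\mathcal{A}$ provided by Claim~\ref{Claim:Hyperbolic periodic points with different indices}. Given $f\in\mathcal{T}$ and $\varepsilon>0$, I would first approximate $f$ within $\varepsilon/3$ by some $f_{1}\in\mathcal{A}$, which carries hyperbolic periodic points $P$ and $Q$ with distinct indices. Because $\dim M=3$ and the partially hyperbolic splitting $E^{ss}\oplus E^{c}\oplus E^{uu}$ has each factor one-dimensional, each index lies in $\{1,2\}$, so after relabelling $\mathrm{index}(P)=2$ and $\mathrm{index}(Q)=1$; the required difference $\mathrm{index}(P)-\mathrm{index}(Q)=1$ is thus automatic.

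The construction of the cycle would proceed by two applications of Hayashi's $C^{1}$ connecting lemma. In the first step I would use robust transitivity of $f_{1}$ (which forces the chain-recurrence class to be the whole of $M$, and both $\mathcal{F}^{s}(P,f_{1})$ and $\mathcal{F}^{u}(Q,f_{1})$ to accumulate on all of $M$) to produce a $C^{1}$-perturbation $f_{2}$ of $f_{1}$ of size less than $\varepsilon/3$, supported away from the orbits of $P$ and $Q$, such that $\mathcal{F}^{s}(P,f_{2})$ meets $\mathcal{F}^{u}(Q,f_{2})$. Since both of these are two-dimensional in $M^{3}$, a generic such intersection is transverse and this transversality is $C^{1}$-open. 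In the second step, I would apply the connecting lemma once more, with perturbation size less than $\varepsilon/3$ and support disjoint from the transverse intersection already obtained, to create an intersection between the one-dimensional curves $\mathcal{F}^{u}(P,f_{3})$ and $\mathcal{F}^{s}(Q,f_{3})$. A strictly transverse intersection is impossible by dimension, but a generic intersection is quasi-transverse, meaning that the sum $T_{x}\mathcal{F}^{u}(P,f_{3})+T_{x}\mathcal{F}^{s}(Q,f_{3})$ is direct at the intersection point $x$. Hence $f_{3}$ satisfies the conditions of Definition~\ref{Definition:Heterodimensional cycles}, lies within $\varepsilon$ of $f$, and for $\varepsilon$ small enough it remains in the open set $\mathcal{T}$.

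The principal obstacle will be the careful deployment of the connecting lemma and the preservation of structure across the two perturbation steps: one must control the supports so that neither $P$, $Q$, nor the transverse intersection established in the first step is destroyed by the second perturbation, and one must certify the genericity of the newly created intersection in order to guarantee that it is automatically quasi-transverse (a jet-transversality argument along the created orbit). Once these technical verifications are completed, the set $\mathcal{B}$ constructed as above is $C^{1}$-dense in $\mathcal{T}$, which is the claim.
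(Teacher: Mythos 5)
Your proposal is correct and follows essentially the same route as the paper: the paper derives the claim from Claim~\ref{Claim:Hyperbolic periodic points with different indices} together with a citation to \cite{BDPR}, and your two-step connecting-lemma construction (first a transverse intersection of $\mathcal{F}^{s}(P)$ with $\mathcal{F}^{u}(Q)$, then a quasi-transverse intersection of the one-dimensional invariant manifolds) is precisely the content of that cited result. The only cosmetic point is that $\mathcal{T}$ need not be open because ``non-Anosov'' is a closed condition; but your perturbed diffeomorphism stays in $\mathcal{T}$ anyway, since it is still robustly transitive and partially hyperbolic, and the presence of hyperbolic periodic points of different indices already rules out Anosov.
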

\begin{proof}
It follows from Claim~\ref{Claim:Hyperbolic periodic points with different indices}
and ~\cite{BDPR}. 
\end{proof}

\begin{Claim}\label{Claim:Open dense subset with blenders}
There exists an open and dense subset $\mathcal{D}$ of $\mathcal{T}$ 
such that every $g\in\mathcal{D}$ has a cs\emph{-blender} and a cu\emph{-blender}.   
\end{Claim}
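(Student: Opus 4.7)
The plan is to obtain $\mathcal{D}$ as the set of diffeomorphisms in $\mathcal{T}$ possessing both a $cs$-blender and a $cu$-blender, and to verify openness and density separately. Openness is free: by the $C^1$-persistence clause in Definition~\ref{Definition:Blender}, the property of carrying a $cs$-blender and a $cu$-blender is itself $C^1$-open in $Diff^r(M)$, and $\mathcal{T}$ is $C^1$-open because robust transitivity is open by definition, partial hyperbolicity is open by Lemma~\ref{Lemma:Partial hyperbolicity is open}, and a partially hyperbolic diffeomorphism whose central bundle fails to be uniformly hyperbolic cannot become Anosov under a small $C^1$ perturbation (the three-bundle splitting persists by Lemma~\ref{Lemma:Partial hyperbolicity is open} and its non-hyperbolic character on $E^c$ is preserved). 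Hence it suffices to show that the set of diffeomorphisms in $\mathcal{T}$ exhibiting both blenders is $C^1$-dense in $\mathcal{T}$.

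For density, I would fix $f\in\mathcal{T}$ and an arbitrary $C^1$-neighbourhood $\mathcal{U}$ of $f$. By Claim~\ref{Claim:Dense subset with heterodimensional cycles} there is $\tilde f\in\mathcal{B}\cap\mathcal{U}$ exhibiting a heterodimensional cycle of codimension one, associated to two hyperbolic saddles of indices $p$ and $p+1$. Because $\tilde f\in\mathcal{T}$ lives on a three-dimensional manifold and is partially hyperbolic with one-dimensional summands $E^{ss}\oplus E^c\oplus E^{uu}$, and this dominated structure is robust, any tangency between the stable and unstable manifolds of these saddles would contradict the uniform dominated splitting available in a neighbourhood of $\tilde f$; consequently $\tilde f$ is $C^1$-far from the closure of the set of diffeomorphisms displaying homoclinic tangencies. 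At this point Proposition~\ref{Proposition:Bonatti, Diaz, Viana} applies and yields an open set $\mathcal{V}\subset Diff^1(M)$ with $\tilde f\in\overline{\mathcal{V}}$ such that every $g\in\mathcal{V}$ possesses both a $cs$-blender (associated to $R_g$) and, via $g^{-1}$, a $cu$-blender (associated to $S_g$).

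It remains to observe that $\mathcal{V}\cap\mathcal{U}\cap\mathcal{T}\neq\varnothing$: since $\mathcal{U}$ is open around $\tilde f$, $\mathcal{T}$ is open, and $\tilde f$ is in the closure of $\mathcal{V}$, we can pick $g\in\mathcal{V}\cap\mathcal{U}\cap\mathcal{T}$. Letting $f$ and $\mathcal{U}$ vary, the union of such neighborhoods produces the desired $\mathcal{D}$; it is open as a union of open sets, and dense in $\mathcal{T}$ because every point of $\mathcal{T}$ is approximated by a suitable $\tilde f$ and then by $g\in\mathcal{V}$.

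The main obstacle I expect is the verification that the heterodimensional cycle of Claim~\ref{Claim:Dense subset with heterodimensional cycles} is genuinely $C^1$-far from homoclinic tangencies, which is the hypothesis required to invoke Proposition~\ref{Proposition:Bonatti, Diaz, Viana}. In the three-dimensional partially hyperbolic setting this should follow from the persistence of the dominated splitting $E^{ss}\oplus E^c\oplus E^{uu}$, since a tangency together with this robust splitting would force a dimension mismatch along the tangential direction; making this argument precise (perhaps by a perturbation within $\mathcal{B}$ to guarantee transversality outside the cycle) is the only nontrivial step. Once this is granted, the rest of the argument is a direct concatenation of Claims~\ref{Claim:Mane}, \ref{Claim:Hyperbolic periodic points with different indices}, \ref{Claim:Dense subset with heterodimensional cycles} with the Bonatti--D\'iaz--Viana construction.
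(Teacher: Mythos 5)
Your proposal is correct and follows essentially the same route as the paper, whose entire proof is the one-line citation of Claim~\ref{Claim:Dense subset with heterodimensional cycles} together with Proposition~\ref{Proposition:Bonatti, Diaz, Viana}. The only substantive point you add --- and rightly identify as the nontrivial step --- is the verification that the cycles are $C^{1}$-far from homoclinic tangencies, which the paper leaves implicit; your argument via the robust one-dimensional splitting $E^{ss}\oplus E^{c}\oplus E^{uu}$ in dimension three is the standard and correct justification.
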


\begin{proof}
It follows from Claim~\ref{Claim:Dense subset with heterodimensional cycles}
and Proposition~\ref{Proposition:Bonatti, Diaz, Viana}.
\end{proof}

\bibliographystyle{plain}

\bibliography{References(Viana)}

@Book{ HPS,
	author = "M. W. Hirsch and C. C. Pugh and M. Shub",
	title = "Invariant Manifolds",
	publisher = "Lecture Notes in Mathematics 583. Springer. Berlin.",
	year = "1977"
}

@Book{ BDV,
	author = "C. Bonatti and L. J. D\'iaz and M. Viana",
	title = "Dynamics beyond uniform hyperbolicity",
	publisher = "Springer",
	year = "2005"
}

@Book{ P,
	author = "Y. B. Pesin",
	title = "Lectures on partial hyperbolicity and stable ergodicity",
	publisher = "Zurich Lectures in Advanced Mathematics, European Mathematical Society",
	year = "2004"
}

@Book{ S,
	author = "J. Santamaria",
	title = "Cocycles over partially hyperbolic maps",
	publisher = "Tese de Doutorado, IMPA",
	year = "2007"
}

@article{ AC,
   author = "F. Abdenur and S. Crovisier",
   title = "Transitivity and topological mixing for ${C}^{1}$ diffeomorphisms",
   journal = "",
   year = "2011",
   volume = "",
   pages = ""
}

@article{ BDPR,
   author = "C. Bonatti and L. J. D\'iaz and E. R. Pujals and J. Rocha",
   title = "Robustly transitive sets and heterodimensional cycles",
   journal = "Ast\'erisque",
   year = "2003",
   volume = "286",
   pages = "187--222"
}

@article{ PS,
   author = "E. R. Pujals and M. Sambarino",
   title = "A sufficient condition for robustly minimal foliations",
   journal = "Ergod. Th. {\&} Dynam. Sys.",
   year = "2006",
   volume = "26",
   pages = "281--289"
}

@article{ BD,
   author = "C. Bonatti and L. J. D\'iaz",
   title = "Persistent nonhyperbolic transitive diffeomorphisms",
   journal = "Annals of Mathematics.",
   year = "1995",
   volume = "143",
   pages = "357--396"
}

@article{ BV,
   author = "C. Bonatti and M. Viana",
   title = "{SRB} measures for partially hyperbolic systems whose central direction is mostly contracting",
   journal = "Israel J. of Mathematics",
   year = "2000",
   volume = "115",
   pages = "157--193"
}

@article{ PuSh,
   author = "C. Pugh and M. Shub",
   title = "Stably ergodic dynamical systems and partial hyperbolicity",
   journal = "J. of Complexity",
   year = "1997",
   volume = "13",
   pages = "125--179"
}

@article{ DW,
   author = "D. Dolgopyat and A. Wilkinson",
   title = "Stably accessibility is ${C}^{1}$ dense",
   journal = "Ast\'erisque",
   year = "2003",
   volume = "287",
   pages = "33--60"
}

@article{ M1,
   author = "R. {Ma\~{n}\'e}",
   title = "Contributions to the stability conjecture",
   journal = "Topology",
   year = "1978",
   volume = "17",
   pages = "383--396"
}

@article{ M2,
   author = "R. {Ma\~{n}\'e}",
   title = "An ergodic closing lemma",
   journal = "Annals of Mathematics",
   year = "1982",
   volume = "116",
   pages = "503--540"
}

\printindex
\end{document}